\documentclass{amsart}
\pdfoutput=1
\usepackage[margin=3.1cm]{geometry}
\usepackage{graphicx,amsmath, amsthm, amssymb, calrsfs, wasysym, verbatim, bbm, color, graphics, parskip, setspace, bbm, quiver, hyperref, eurosym, booktabs, multicol}
\usepackage[mathscr]{eucal}
\usepackage{tikz-cd}
\tikzcdset{scale cd/.style={every label/.append style={scale=#1},
    cells={nodes={scale=#1}}}}
\usepackage[
backend=biber,
style=alphabetic, maxbibnames=99, maxcitenames=99]{biblatex}
\hypersetup{
    colorlinks=true,
    linkcolor=black,
    citecolor= brown,
      }
\usepackage[T1]{fontenc}
\DeclareMathAlphabet{\mathcal}{OMS}{cmsy}{m}{n}

\theoremstyle{remark}
\newtheorem{defn}{Definition}[section]

\newtheorem{assumption}[defn]{Assumption}

\newtheorem{remark}[defn]{Remark}

\theoremstyle{definition}
\newtheorem{prop}[defn]{Proposition}
\newtheorem{lemma}[defn]{Lemma}
\newtheorem{cor}[defn]{Corollary}
\newtheorem{thm}[defn]{Theorem}

\newcommand{\cat}[1]{\mathscr{#1}}
\newcommand{\Spc}{\operatorname{Spc}}
\newcommand{\Spec}{\operatorname{Spec}}
\newcommand{\pdom}{\stackrel{p}{\geq}}
\newcommand{\twodom}{\stackrel{2}{\geq}}
\newcommand{\peq}{\stackrel{p}{\sim}}

\newcommand{\supp}{\operatorname{supp}}
\newcommand{\unit}{\mathbbm{1}}
\newcommand{\tunit}{\tilde{\unit}}
\newcommand{\colim}{\mathrm{colim}}
\newcommand{\Hom}{\mathrm{Hom}}

\DeclareMathOperator{\geom}{gm}
\DeclareMathOperator{\DM}{DM}
\DeclareMathOperator{\DMgm}{\DM_{\geom}}
\DeclareMathOperator{\DQM}{DQM}
\DeclareMathOperator{\DTM}{DTM}
\DeclareMathOperator{\DTMgm}{\DTM_{\geom}}
\DeclareMathOperator{\DAM}{DAM}
\DeclareMathOperator{\DAMgm}{\DAM_{\geom}}
\DeclareMathOperator{\DQMgm}{\DQM_{\geom}}
\DeclareMathOperator{\DATM}{DATM}
\DeclareMathOperator{\DATMgm}{\DATM_{\geom}}
\DeclareMathOperator{\PP}{\mathbb{P}}
\DeclareMathOperator{\F}{\mathbb{F}}
\DeclareMathOperator{\N}{\mathbb{N}}
\DeclareMathOperator{\Q}{\mathbb{Q}}
\DeclareMathOperator{\C}{\mathbb{C}}
\DeclareMathOperator{\R}{\mathbb{R}}
\DeclareMathOperator{\Z}{\mathbb{Z}}
\DeclareMathOperator{\M}{M}
\DeclareMathOperator{\T}{T}
\DeclareMathOperator{\cont}{\mathfrak{c}}
\DeclareMathOperator{\p}{\mathfrak{p}}

\DeclareMathOperator{\hocolim}{hocolim}
\DeclareMathOperator{\Loc}{Loc}
\DeclareMathOperator{\K}{K}
\DeclareMathOperator{\cone}{cone}
\DeclareMathOperator{\im}{im}
\begin{document}
\title{The motivic tt-geometry of real quadrics}
\author{Jean Paul Schemeil}
\address{School of Mathematical Sciences, University of Nottingham, Nottingham, NG7 2RD, UK}
\email{jeanpaul.schemeil@nottingham.ac.uk}
\date{\today}
\begin{abstract}
We study the tensor-triangular geometry of the category of Voevodsky motives generated by real quadrics. At the prime 2, we determine its Balmer spectrum, and find that it is a countably infinite, non-Noetherian space of Krull dimension 2. We detail the relationship between this space, the real Artin-Tate spectrum computed by Balmer–Gallauer, and Vishik’s isotropic points. We conclude by combining our computation with Balmer–Gallauer's results on Artin-Tate motives to obtain a full description of the spectrum of integral motives of quadrics over real algebraic numbers.
\end{abstract}
\maketitle
\tableofcontents
\section{Introduction}
Tensor-triangular geometry (\emph{tt-geometry} for short) has found fruitful applications in many settings \cite{balmer2010tensor}. A particularly active direction is \emph{motivic tt-geometry}, whose long-term goal is to classify motives up to the tensor-triangulated structure of their ambient category
\[
\DM(k):=\DM(k;\Z),
\]
introduced by Voevodsky \cite{voevodsky2000triangulated}. For \emph{geometric motives} (the compact objects of $\DM(k)$), this amounts to describing the Balmer spectrum \cite{balmer2005spectrum} of the tt-category $\DMgm(k)$. Although the current status of the problem remains largely open, recent work of Vishik \cite{vishik2024isotropic, vishik2025balmer} has provided profound insights into the complexity of this space. When the characteristic of the base field $k$ is zero, Vishik defined for any prime number $p$ the category of \emph{isotropic motives} $\DM(k/k; \F_p)$ by localising $\DM(k; \F_p)$ at motives of $p$-anisotropic varieties (those whose closed points have degree divisible by $p$). Using this framework, Vishik constructed a family of \emph{isotropic realisation} functors
\[\psi_{E,p}: \DM(k) \to \DM(E(\PP^{\infty})/E(\PP^{\infty}); \F_p)\]
indexed over all primes $p$ and field extensions $E/k$. When restricted to geometric motives, the kernels of these 
functors detect the largest currently known family of closed points of the spectrum of $\DMgm(k)$. As a striking consequence, over the real numbers $k=\R$, these realisations allowed Vishik to establish that $\Spc(\DMgm(\R))$ has cardinality $2^{\mathfrak{c}}$. This bound provides a first measure of the complexity of this space, anticipating the challenge involved in achieving a complete topological classification.

A sensible approach to navigate this complexity is to study the spectrum of the motivic category "from below": rather than confronting $\DMgm(k)$ all at once, one considers the tt-geometry of carefully chosen full tt‑subcategories of the motivic category. The relevance of this strategy relies on a general theorem of Balmer \cite{balmer2018surjectivity} who showed that the inclusion of a full tt-subcategory 
\[\cat{L} \hookrightarrow \cat{K}\] induces a continuous surjective restriction map on spectra 
\[\Spc(\cat{K}) \twoheadrightarrow \Spc(\cat{L}).\] 
This surjection effectively provides a lower bound on the complexity of the ambient spectrum. In the motivic setting, manifestations of this idea are exemplified by the work of Peter \cite{peter2013prime} and Gallauer \cite{gallauer2019tt} who initiated the study of the spectrum of \emph{Tate motives} $\DTMgm(k)$; Balmer-Gallauer \cite{balmer2022three, balmer2025spectrum} who introduced the study of the spectrum of \emph{Artin motives} $\DAMgm(k)$ and \emph{Artin-Tate motives} $\DATMgm(k)$; and Sparks \cite{sparks2025tensor} who studied the tt-geometry of these categories in isotropic contexts. The relevance of restricting to such full tt-subcategories is that their tt-geometry can often be described explicitly in terms of specific algebraic invariants of the base field. For example, over algebraically closed and real closed fields, Gallauer \cite{gallauer2019tt} and Balmer-Gallauer \cite{balmer2022three} demonstrated that the spectrum of Tate motives admits a description in terms of the \textit{motivic cohomology} of the base field; for Artin motives, the recent breakthrough of Balmer-Gallauer \cite{balmer2025spectrum} has shown that, over any field, the spectrum can be expressed in terms of the \textit{absolute Galois group} of the base field. Furthermore, Artin-Tate motives over real closed fields, Balmer-Gallauer \cite{balmer2022three} have obtained a corresponding description involving a combination of the above invariants. Finally, in isotropic contexts, Sparks \cite{sparks2025tensor} has shown that at the prime $2$, the tt-geometry of \emph{isotropic (Artin-)Tate motives over flexible fields} can be described in terms of the \emph{isotropic motivic cohomology} of the base field.

Following the themes outlined above, in this article we propose to initiate the study of the Balmer spectrum of the full tt-subcategory generated by \emph{motives of smooth projective quadrics} 
\[\DQMgm(k; R).\] 
Originally introduced by Bachmann \cite{bachmann2017invertibility} to study the\emph{ Picard group }of the motivic category, this subcategory provides a natural first candidate to incorporate motives of higher dimensional varieties into the landscape of motivic tt-geometry. To set the stage, we briefly address the case when the base field $k$ is algebraically closed. Because all quadratic forms over such a field are isotropic, the motives of quadrics completely decompose into Tate motives; consequently, $\DQMgm(k, R)=\DTMgm(k, R)$. Thanks to the work of Peter \cite{peter2013prime} and Gallauer \cite{gallauer2019tt}, the tt-geometry of this category is already well understood. However, if $k$ is not algebraically closed, $\DTMgm(k ; R)$  forms a proper tt-subcategory of $\DQMgm(k ; R)$. In general, it is expected that the tt-geometry of $\DQMgm(k ; R)$ reflects the complexity of the theory of quadratic forms over $k$: this is because $\DQMgm(k ; R)$ is generated by Tate motives and by \emph{motives of anisotropic quadrics}, which correspond to anisotropic quadratic forms over $k$. This naturally identifies the case $k=\R$ (or more generally, any real closed field) as the natural first step beyond the algebraically closed fields, and it will represent the primary focus of this article. Indeed, we have very good control over the theory of quadratic forms over $\R$; moreover, if $R=\F_p$ for $p$ an odd prime (or more generally, $2$ is invertible in $R$), then motives of quadrics decompose into Tate motives and motives of quadratic extensions (i.e. $\M(\C)$), in particular we have an equivalence
\[ \DATMgm(\R ;\F_p) \simeq  \DQMgm(\R ; \F_p) . \]
By \cite[Corollary 11.2]{balmer2022three}, this spectrum is completely understood. However, for $R=\F_2$ (or more generally, $2$ is not a unit of $R$), the inclusion 
\[\DATMgm(\R ; \F_2) \hookrightarrow \DQMgm(\R ; \F_2)\]
is far from being an equivalence, since motives of arbitrary dimensional $2$-anisotropic quadrics enter the picture, making the tt-category $\DQMgm(\R ; \F_2)$ substantially richer. The main result of this article is the computation of the Balmer spectrum of this tt-category.

\textbf{Main Theorem}. \emph{Let $F$ be a real closed field. The Balmer spectrum of the tt-category $\DQMgm(F ; \F_2)$ is the following countably infinite, non-Noetherian space of Krull dimension 2, }
\[\begin{tikzcd}[ampersand replacement=\&,sep=small]
	{\p_{\infty,0}} \&\& {\p_{k+1,0}} \& {\p_{k,0}} \& {} \& {\p_{3,0}} \& {\p_{2,0}} \& {\textcolor{red}{\p_{1,0}}} \\
	\& \dots \&\&\& \dots \\
	\& {} \& {\p_{k+1,1}} \& {\p_{k,1}} \& {} \& {\p_{3,1}} \& {\textcolor{orange}{\p_{2,1}}} \& {\textcolor{blue}{\p_{1,1}}} \\
	{\textcolor{violet}{\p_{\infty,1}}} \\
	\&\&\&\& {\textcolor{teal}{\p_{\infty,2}}}
	\arrow[no head, from=1-1, to=4-1]
	\arrow[between={0}{0.4}, no head, from=1-3, to=3-2]
	\arrow[no head, from=1-3, to=3-3]
	\arrow[no head, from=1-4, to=3-3]
	\arrow[no head, from=1-4, to=3-4]
	\arrow[between={0}{0.4}, no head, from=1-6, to=3-5]
	\arrow[no head, from=1-6, to=3-6]
	\arrow[no head, from=1-7, to=3-6]
	\arrow[no head, from=1-7, to=3-7]
	\arrow[no head, from=1-8, to=3-7]
	\arrow[no head, from=1-8, to=3-8]
	\arrow["\dots"{description}, no head, from=3-2, to=5-5]
	\arrow[no head, from=3-3, to=5-5]
	\arrow[between={0}{0.4}, no head, from=3-4, to=1-5]
	\arrow[no head, from=3-4, to=5-5]
	\arrow[no head, from=3-6, to=5-5]
	\arrow[no head, from=3-7, to=5-5]
	\arrow[shift right=3, no head, from=4-1, to=5-5]
	\arrow["\dots"{description, pos=0.3}, between={0}{0.6}, no head, from=5-5, to=2-5]
	\arrow[shift right=3, no head, from=5-5, to=3-8]
\end{tikzcd}\]
\emph{where a line indicates the specialisation relation going upwards. The closed subsets are specialisation closed subsets $Z$ such that if $\p_{\infty,0} \not \in Z$, then $Z$ is finite.}

The colours decorating the picture above depict how points get mapped under the restriction
\[ \Spc(\DQMgm(F ; \F_2)) \twoheadrightarrow \Spc(\DATMgm(F ; \F_2)),\]
where the (decorated) image consists of the following six-point space \cite[Theorem 10.1]{balmer2022three}.
\[\begin{tikzcd}[ampersand replacement=\&,sep=small]
	{\mathcal{L}_1} \&\&\&\& {\textcolor{red}{\mathcal{N}_1}} \\
	\&\& {\textcolor{orange}{\mathcal{M}_1}} \\
	{\textcolor{violet}{\mathcal{L}_0}} \&\&\&\& {\textcolor{blue}{\mathcal{N}_0}} \\
	\&\& {\textcolor{teal}{\mathcal{M}_0}}
	\arrow[no head, from=1-1, to=2-3]
	\arrow[no head, from=1-1, to=3-1]
	\arrow[no head, from=1-5, to=3-5]
	\arrow[no head, from=2-3, to=1-5]
	\arrow[no head, from=2-3, to=4-3]
	\arrow[no head, from=3-1, to=4-3]
	\arrow[no head, from=4-3, to=3-5]
\end{tikzcd}\]
From the description of the topology, we observe that the top horizontal sequence of closed points $\p_{k,0}$ for $k \in \N$, as well as the lower horizontal sequence of points $\p_{k,1}$ for $k \in \N$, both converge to the point at infinity $\p_{\infty,0}$.
\[\begin{tikzcd}[ampersand replacement=\&,sep=small]
	{\p_{\infty,0}} \&\& {\p_{k+1,0}} \& {\p_{k,0}} \& {} \& {\p_{3,0}} \& {\p_{2,0}} \& {\p_{1,0}} \\
	\& \dots \&\&\& \dots \\
	\& {} \& {\p_{k+1,1}} \& {\p_{k,1}} \& {} \& {\p_{3,1}} \& {\p_{2,1}} \& {\p_{1,1}}
	\arrow[color={rgb,255:red,214;green,153;blue,92}, dashed, no head, from=1-3, to=1-1]
	\arrow[between={0}{0.4}, no head, from=1-3, to=3-2]
	\arrow[no head, from=1-3, to=3-3]
	\arrow[color={rgb,255:red,214;green,153;blue,92}, dashed, no head, from=1-4, to=1-3]
	\arrow[no head, from=1-4, to=3-3]
	\arrow[no head, from=1-4, to=3-4]
	\arrow[color={rgb,255:red,214;green,153;blue,92}, dashed, no head, from=1-5, to=1-4]
	\arrow[color={rgb,255:red,214;green,153;blue,92}, dashed, no head, from=1-6, to=1-5]
	\arrow[between={0}{0.4}, no head, from=1-6, to=3-5]
	\arrow[no head, from=1-6, to=3-6]
	\arrow[color={rgb,255:red,214;green,153;blue,92}, dashed, no head, from=1-7, to=1-6]
	\arrow[no head, from=1-7, to=3-6]
	\arrow[no head, from=1-7, to=3-7]
	\arrow[color={rgb,255:red,214;green,153;blue,92}, dashed, no head, from=1-8, to=1-7]
	\arrow[no head, from=1-8, to=3-7]
	\arrow[no head, from=1-8, to=3-8]
	\arrow[color={rgb,255:red,214;green,92;blue,92}, curve={height=-24pt}, dashed, no head, from=3-2, to=1-1]
	\arrow[color={rgb,255:red,214;green,92;blue,92}, dashed, no head, from=3-3, to=3-2]
	\arrow[between={0}{0.4}, no head, from=3-4, to=1-5]
	\arrow[color={rgb,255:red,214;green,92;blue,92}, dashed, no head, from=3-4, to=3-3]
	\arrow[color={rgb,255:red,214;green,92;blue,92}, dashed, no head, from=3-5, to=3-4]
	\arrow[color={rgb,255:red,214;green,92;blue,92}, dashed, no head, from=3-6, to=3-5]
	\arrow[color={rgb,255:red,214;green,92;blue,92}, dashed, no head, from=3-7, to=3-6]
	\arrow[color={rgb,255:red,214;green,92;blue,92}, dashed, no head, from=3-8, to=3-7]
\end{tikzcd}\]
This behaviour justifies the asymmetrical layout we have chosen to draw $\Spc(\DQMgm(F ; \F_2))$.

The computation of $\Spc(\DQMgm(F ; \F_2))$ uses the following three main ideas. 
\subsubsection*{Reduction to cellular motivic categories}
We construct a family of geometric functors out of the big tt-category $\DQM(F ; \F_2)$ into more tractable cellular motivic tt-categories
\[\{\pi_n\colon \DQM(F ;\F_2)\to \cat{K}_n\}_{n\in \N\cup\{\infty\}}.\]
The categories $\cat{K}_n$ are obtained by base-changing along the function fields of particular anisotropic quadrics over $F$, and then applying a smashing localisation which kills the motives of those quadrics that remain anisotropic. This stratification technique goes back to \cite{bachmann2017invertibility, vishik2019affine}, where it was used to study the Picard group of $\DQMgm(k ; R)$. Transposing it to tt-geometry, we show that the $\pi_n$ form a jointly conservative family; invoking the general theorem of \cite{barthel2024surjectivity}, this reduces the computation of $\Spc(\DQMgm(F ; \F_2))$ to the spectra $\Spc(\cat{K}_n^c)$, where $\cat{K}_n^c$ denotes the full tt-subcategory of the compact objects of $\cat{K}_n$.

\subsubsection*{Computing $\Spc(\cat{K}_n^c)$ using weight structures.}
 We show that the tt-geometry of the cellular categories $\cat{K}_n^c$ is determined by the \emph{bi-graded} endomorphism ring of the unit. The key input is the construction of suitable weight structures, which reduce the computation of spectra to proving that sufficiently many morphisms are tensor-nilpotent on specified objects. This technique is inspired by Sparks \cite{sparks2025tensor}, who proved that under appropriate hypotheses the existence of enough tensor-nilpotent morphisms forces the spectrum of a tt-category to be a singleton. In our setting, we adapt this idea to control tensor-nilpotence on specific objects, providing an effective computational tool.

\subsubsection*{Chow weight structure and generators of prime tt-ideals.}
After detecting all points of the Balmer spectrum of $\DQMgm(F ; \F_2)$, it remains to determine specialisation relations and the topology. We do this by describing generators for all tt-primes. The key tool is the classical Chow weight structure restricted to $\DQMgm(F ; \F_2)$, which will allow us to study the pull-backs of points of $\Spc(\cat{K}_n^c)$ along the functors $\pi_n$.

After establishing the main theorem and its relationship with the spectrum of real Artin-Tate motives, we analyse how the $2^{\cont}$ isotropic points detected by Vishik restrict to $\DQMgm(\R ; \F_2)$. Finally, since over real closed fields motives of quadrics coincide with Artin-Tate motives away from the prime $2$, we explain how to obtain the full integral spectrum when the algebraic closure of $F$ satisfies the \emph{Peter-Gallauer vanishing condition} (cf. \cite[Hypothesis 6.6]{gallauer2019tt}) and \cite[Section 4.3]{peter2013prime})--for instance, when $F$ is the field of real algebraic numbers $\R^{\mathrm{alg}}$--by adapting the strategy of \cite[Theorem 11.3]{balmer2022three}.

\textbf{Corollary}. \emph{Let $F=\R^{alg}=\overline{\Q} \cap \R$, then the Balmer spectrum of $\DQMgm(F ; \Z)$ is the following space,}
\[\begin{tikzcd}[ampersand replacement=\&,sep=tiny]
	{\p_{\infty,0}} \&\& {\p_{k+1,0}} \& {\p_{k,0}} \& {} \& {\p_{3,0}} \& {\p_{2,0}} \& {\p_{1,0}=\mathfrak{m}_2} \& {\mathfrak{m}_3} \& {\mathfrak{m}_5} \& \dots \& {\mathfrak{m}_l} \& \dots \\
	\& \dots \&\&\& \dots \\
	\& {} \& {\p_{k+1,1}} \& {\p_{k,1}} \& {} \& {\p_{3,1}} \& {\p_{2,1}} \& {\p_{1,1}=\mathfrak{e}_2} \& {\mathfrak{e}_3} \& {\mathfrak{e}_5} \& \dots \& {\mathfrak{e}_l} \& \dots \\
	{\p_{\infty,1}} \\
	\&\&\&\& {\p_{\infty,2}} \\
	\&\&\&\&\&\&\&\&\&\& {\mathcal{P}_0}
	\arrow[no head, from=1-1, to=4-1]
	\arrow[between={0}{0.4}, no head, from=1-3, to=3-2]
	\arrow[no head, from=1-3, to=3-3]
	\arrow[no head, from=1-4, to=3-3]
	\arrow[no head, from=1-4, to=3-4]
	\arrow[between={0}{0.4}, no head, from=1-6, to=3-5]
	\arrow[no head, from=1-6, to=3-6]
	\arrow[no head, from=1-7, to=3-6]
	\arrow[no head, from=1-7, to=3-7]
	\arrow[no head, from=1-8, to=3-7]
	\arrow[no head, from=1-8, to=3-8]
	\arrow[no head, from=1-9, to=3-9]
	\arrow[no head, from=1-10, to=3-10]
	\arrow["\dots"{description}, no head, from=1-11, to=3-11]
	\arrow[no head, from=1-12, to=3-12]
	\arrow["\dots"{description}, no head, from=1-13, to=3-13]
	\arrow["\dots"{description}, shift right, no head, from=3-2, to=5-5]
	\arrow[no head, from=3-3, to=5-5]
	\arrow[between={0}{0.4}, no head, from=3-4, to=1-5]
	\arrow[no head, from=3-4, to=5-5]
	\arrow[no head, from=3-6, to=5-5]
	\arrow[no head, from=3-7, to=5-5]
	\arrow[no head, from=3-8, to=6-11]
	\arrow[no head, from=3-9, to=6-11]
	\arrow["\dots"{description}, no head, from=3-11, to=6-11]
	\arrow[no head, from=3-12, to=6-11]
	\arrow["\dots"{description}, no head, from=3-13, to=6-11]
	\arrow[shift right=3, no head, from=4-1, to=5-5]
	\arrow["\dots"{description, pos=0.3}, between={0}{0.6}, no head, from=5-5, to=2-5]
	\arrow[shift right=3, no head, from=5-5, to=3-8]
\end{tikzcd}\]
\emph{where $l$ ranges over all prime numbers, and lines indicate specialisation relations going upwards. The closed subsets are specialisation closed subsets $Z$ such that
\begin{itemize}
    \item if $\p_{\infty,0} \not \in Z$, then $Z \cap \overline{\{\p_{\infty, 2}\}}$ is finite;
    \item if $\mathcal{P}_{0} \not \in Z$, then $Z \cap \overline{\{\mathcal{P}_{0}\}}$ is finite.
\end{itemize}}

\subsection*{Acknowledgements} 
I am deeply grateful to my advisor Alexander Vishik for many invaluable discussions, for his constant support and guidance, and for his insightful comments on a preliminary version of this document. I would like to thank Fraser Sparks for many fruitful conversations and for generously sharing with me his results on the Balmer spectrum of isotropic Tate motives over flexible fields prior to their appearance in \cite{sparks2025tensor}. I would also like to thank Martin Gallauer for his helpful comments on a preliminary version of this document.

\section{Preliminaries}
In this section we briefly recall the tt-geometric and motivic background used throughout the paper, and we fix notation and terminology.
\subsection{tt-geometric recollections}
Throughout, let $\cat{K}$ be a \emph{rigidly-compactly generated big tt-category}, and let $\cat{K}^c$ denote its essentially small rigid tt-subcategory of compact objects. Following \cite{balmer2005spectrum}, the \emph{Balmer spectrum} of $\cat{K}^c$ is the topological space
\[
\Spc(\cat{K}^c),
\]
whose underlying set consists of prime tt-ideals of $\cat{K}^c$, endowed with the topology for which a basis of closed sets is given by the supports
\[
\supp(X) := \{\,\p \in \Spc(\cat{K}^c) \mid X \notin \p\,\},
\]
where $X$ ranges over all objects of $\cat{K}^c$.

In these settings, the assignment
\[
\cat{J}\longmapsto \bigcup_{X\in \cat{J}} \supp(X)
\]
induces an inclusion-preserving bijection between (radical) tt-ideals $\cat{J}\subseteq \cat{K}^c$ and Thomason subsets of $\Spc(\cat{K}^c)$.
\subsection*{Jointly conservative families of geometric functors}
Let $F: \cat{K}\to \cat{L}$ be a tt-functor between rigidly-compactly generated big tt-categories. We say that $F$ is \emph{geometric} if it preserves coproducts. Any geometric tt-functor preserves compact objects, hence restricts to a tt-functor
\[
F^c : \cat{K}^c \to \cat{L}^c.
\]
By the functoriality of the Balmer spectrum, $F^c$ induces a continuous spectral map
\[
\Spc(F^c)\colon \Spc(\cat{L}^c)\to \Spc(\cat{K}^c),
\]
given by the pull-back of prime tt-ideals.

A family of geometric functors $\{F_i\colon \cat{K}\to \cat{K}_i\}_{i\in I}$ is called \emph{jointly conservative} if $F_i(X)=0$ for all $i\in I$ implies $X=0$.

The relevance of this notion lies in the following fundamental result in tt-geometry.

\begin{thm}[\cite{barthel2024surjectivity}] \label{theorem: barthel2024surjectivity}
\emph{If a family of geometric functors as above is jointly conservative, then the induced map on spectra
\[ \bigsqcup_{i \in I} \Spc(F^c_i): \bigsqcup_{i \in I} \Spc(\cat{K}_i^c) \to \Spc(\cat{K}^c) \]
is surjective. }
\end{thm}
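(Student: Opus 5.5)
The plan is to reduce surjectivity onto each prime to a statement about a single big tt-category, via homological residue fields and the fact that $\cat{K}^c$ detects vanishing. Fix a prime $\p \in \Spc(\cat{K}^c)$. I want an index $i$ and a prime $\mathfrak{q}\in\Spc(\cat{K}_i^c)$ with $(F_i^c)^{-1}(\mathfrak{q})=\p$.

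First, I produce a nonzero object of $\cat{K}$ that is ``supported exactly at $\p$''. Concretely, I use the Balmer--Favi type idempotent attached to $\p$. Let $e_{\p}$ be the left idempotent for the smashing ideal generated by $\p$, and let $f_{\mathcal{Y}}$ be the right idempotent associated with the complement $\mathcal{Y}$ of the generalisation-closed set $\{\mathfrak{q} : \mathfrak{q}\subseteq\p\}$. If $\{\p\}$ is not visible, I instead use the homological residue field route. Choose a maximal Serre ideal (a homological prime) $\mathcal{B}$ of $\mathrm{mod}\text{-}\cat{K}^c$ lying over $\p$, and let $E_{\mathcal{B}}\in\cat{K}$ be the associated pure-injective object. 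It is nonzero and satisfies
\[\ker\bigl(\widehat{E_{\mathcal{B}}}\otimes -\bigr)\cap\cat{K}^c=\p.\]
That is, for compact $X$ we have $E_{\mathcal{B}}\otimes X=0$ if and only if $X\in\p$.

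Next, I apply joint conservativity to $E:=E_{\mathcal{B}}\neq0$. This gives some $i$ with $F_i(E)\neq0$. I then transport the homological spectrum. Since $F_i$ is geometric with coproduct-preserving right adjoint $U_i$, the projection formula $U_i(F_i(E)\otimes Y)\cong E\otimes U_iY$ holds. Because $F_i(E)$ is nonzero, there is a homological prime $\mathcal{B}'$ of $\cat{K}_i^c$ whose residue object $E'$ satisfies $E'\otimes F_i(E)\neq0$. I will choose $\mathcal{B}'$ by using that the homological support detects nonvanishing of objects; this relies on the known theorem that homological support has the detection property.

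Then I set $\mathfrak{q}:=\phi(\mathcal{B}')$, the Balmer prime below $\mathcal{B}'$, and compare it with $\p$. For compact $X\in\cat{K}^c$, the object $X$ lies in $(F_i^c)^{-1}(\mathfrak{q})$ if and only if $E'\otimes F_i X=0$. I need to show this happens if and only if $E\otimes X=0$, i.e. iff $X\in\p$. One direction follows by tensoring: if $E\otimes X=0$ then $E'\otimes F_i(E)\otimes F_iX=0$. Using the ``tensor-idempotence'' property of residue objects ($E'\otimes F_iE$ is a nonzero sum of copies of $E'$, by purity), this forces $E'\otimes F_iX=0$. The converse uses the maximality of $\mathcal{B}$ together with the fact that the pullback of $\mathcal{B}'$ to $\cat{K}^c$ is a homological prime containing $\mathcal{B}$'s data, hence equal to $\mathcal{B}$. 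This gives $(F_i^c)^{-1}(\mathfrak{q})=\phi(\mathcal{B})=\p$.

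The main obstacle is this last comparison: showing that homological primes pull back to homological primes under $F_i$ (up to the relevant identification), and that the pullback of $\mathcal{B}'$ is exactly $\mathcal{B}$. This is where the pure-injectivity and the detection property of homological support must be used carefully. I would first try to establish it via the Frobenius-type formula with $U_i$.
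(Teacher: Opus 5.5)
The paper gives no proof of this statement; it is quoted from \cite{barthel2024surjectivity}, so I am judging your argument on its own merits. Your opening reduction is sound. You choose a homological prime $\mathcal{B}$ with $\phi(\mathcal{B})=\p$, which exists because Balmer's comparison map $\phi$ is surjective. You note that $E_{\mathcal B}\neq 0$, and that $E_{\mathcal B}\otimes X=0$ iff $X\in\p$ for compact $X$. Joint conservativity then gives an $i$ with $F_i(E_{\mathcal B})\neq 0$. The Balmer--Favi detour in your first paragraph is never used and can be dropped.

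\textbf{The gap.} It is the next step. You produce $\mathcal{B}'$ with $E_{\mathcal{B}'}\otimes F_i(E_{\mathcal B})\neq 0$ by invoking ``the known theorem that homological support has the detection property''. No such theorem is available. Whether $\operatorname{Supp}^h(t)=\varnothing$ forces $t=0$ for big objects $t$ is an open question raised by Balmer, settled only under additional hypotheses. The statement, by contrast, is claimed for arbitrary rigidly-compactly generated categories, and everything afterwards depends on this $\mathcal{B}'$.

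Your comparison paragraph has further soft spots. The claim that $E_{\mathcal{B}'}\otimes F_i(E_{\mathcal B})$ is a sum of copies of $E_{\mathcal{B}'}$ is not a standard fact; Balmer's tensor formula for $\operatorname{Supp}^h$ is what that direction needs. The converse direction is only gestured at.

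\textbf{The repair.} Build $\mathcal{B}'$ so that it lies over $\mathcal{B}$ by construction, instead of extracting it from a support. Let $\mathcal{C}\subseteq\mathrm{mod}\text{-}\cat{K}_i^c$ be the Serre $\otimes$-ideal generated by $\widehat{F_i}(\mathcal{B})$.

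\emph{Step 1: $\mathcal{C}$ is proper.} This is exactly what $F_i(E_{\mathcal B})\neq0$ gives you. The restriction functor $\mathrm{Mod}\text{-}\cat{K}_i^c\to\mathrm{Mod}\text{-}\cat{K}^c$, right adjoint to $\widehat{F_i}$, is exact, satisfies a projection formula, and sends $\hat{\unit}$ to $\widehat{U_i\unit}$. So $\hat\unit\in\mathcal{C}$ would force $\widehat{U_i\unit}\in\vec{\mathcal{B}}$. Since $\hat E_{\mathcal B}\otimes\mathcal{B}=0$, that would give $E_{\mathcal B}\otimes U_i\unit=0$. This contradicts $E_{\mathcal B}\otimes U_i\unit\cong U_iF_i(E_{\mathcal B})\neq 0$.

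\emph{Step 2: choose $\mathcal{B}'$.} By Zorn's lemma, pick a maximal Serre $\otimes$-ideal $\mathcal{B}'\supseteq\mathcal{C}$.

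\emph{Step 3: $\mathcal{B}'$ lies over $\mathcal{B}$.} Because $\widehat{F_i}$ is exact and monoidal, $\widehat{F_i}^{-1}(\mathcal{B}')$ is a proper Serre $\otimes$-ideal containing $\mathcal{B}$. By maximality of $\mathcal{B}$ it equals $\mathcal{B}$. Hence
\[(F_i^c)^{-1}(\phi(\mathcal{B}'))=h^{-1}\widehat{F_i}^{-1}(\mathcal{B}')=\phi(\mathcal{B})=\p.\]

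This route needs no detection property and replaces your entire comparison step.
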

\subsection*{Smashing localisations}{(\cite{balmer2011generalized})}
A full triangulated subcategory $\cat{J}\subseteq \cat{K}$ is called a \emph{localising tensor-ideal} if it is closed under arbitrary coproducts and satisfies $\cat{K}\otimes \cat{J}\subseteq \cat{J}$. For a collection of objects $S\subseteq \cat{K}$, we write $\Loc_{\otimes}(S)$ for the smallest localising tensor-ideal containing $S$.
\begin{remark}{(\cite[Theorem 8.1]{greenlees2019balmer}, \cite{stevenson2013support})}
If $S$ is a set of compact objects, then the tt-ideal $\langle S\rangle\subseteq \cat{K}^c$ identifies as
\[
\langle S\rangle \;=\; \Loc_{\otimes}(S)\cap \cat{K}^c.
\]
\end{remark}
A \emph{localisation} of $\cat{K}$ is a pair $(L, \lambda)$, where $L: \cat{K} \to \cat{K}$ is an exact functor and $\lambda$ is a natural transformation $\lambda: \mathrm{Id}_{\cat{K}} \to L$ such that $L\lambda=\lambda L$ and $L \lambda: L \to L^2$ is an isomorphism. A colocalisation of $\cat{K}$ is a pair $(\Gamma, \gamma)$ such that $(\Gamma^{op}, \gamma^{op})$ is a localisation of $\cat{K}^{op}$.

A localisation $(L, \lambda)$ of $\cat{K}$ is called \emph{smashing} if $L$ preserves coproducts. A localising tensor-ideal $\cat{S}\subseteq \cat{K}$ is called \emph{smashing} if there exists a smashing localisation $(L_{\cat{S}}, \lambda)$ of $\cat{K}$ such that $\ker(L_{\cat{S}})=\cat{S}$. In this case, there is an associated colocalisation functor $(\Gamma_{\cat{S}}, \gamma)$ such that $\im(\Gamma_{\cat{S}})=\cat{S}$. Moreover, there is an idempotent triangle
\[
\Gamma_{\cat{S}}(\unit)\longrightarrow \unit \longrightarrow L_{\cat{S}}(\unit)\longrightarrow \Gamma_{\cat{S}}(\unit)[1].
\]
Crucially, in these settings, smashing localisations are in one-to-one correspondence with idempotent triangles \cite[Theorem 3.5]{balmer2011generalized}. In particular, there are natural isomorphisms \[
L_{\cat{S}} \simeq L_{\cat{S}}(\unit)\otimes -,
\qquad
\Gamma_{\cat{S}} \simeq \Gamma_{\cat{S}}(\unit)\otimes -.
\]
Conversely, any  idempotent triangle in $\cat{K}$ 
\[X \to  \unit \to Y \to X[1] \]
gives rise to a smashing localisation $L_Y= Y \otimes -: \cat{K} \to \cat{K}$.

\subsection*{Weight structures}{(\cite{bondarko2010weight})}
A \emph{weight structure} on a triangulated category $\cat{T}$ is a pair of  full subcategories $\cat{T}^{\leq 0}, \cat{T}^{\geq 0} \subseteq \cat{T}$ satisfying the following axioms:
\begin{enumerate}
\item $\cat{T}^{\leq 0}$ and $\cat{T}^{\geq 0}$ are idempotent-complete.
\item $\cat{T}^{\leq 0} \subseteq \cat{T}^{\leq 0}[1]$ and $\cat{T}^{\geq 0}[1] \subseteq \cat{T}^{\geq 0}$.
\item \emph{Orthogonality}: For every $X \in \cat{T}^{\leq 0}$ and $Y \in \cat{T}^{\geq 0}$, we have $\Hom_{\cat{T}}(X, Y[1])=0$.
\item \emph{Weight decomposition}: For every object $X \in \cat{T}$, there exists a distinguished triangle
\[B \to X \to A \to B,\]
with $B \in \cat{T}^{\leq 0}$ and $A \in \cat{T}^{\geq 0}[1]$.
\end{enumerate}
We denote the shifted subcategories and their intersection by
\[\cat{T}^{\leq n}:=\cat{T}^{\leq 0}[n], \quad \cat{T}^{\geq n}:=\cat{T}^{\geq 0}[n], \quad \cat{T}^{=0}:= \cat{T}^{\leq 0} \cap \cat{T}^{\geq 0},\]
and we call $\cat{T}^{=0}$ the \emph{heart} of the weight structure. A weight structure is \emph{bounded} if it exhausts the category:
\[\bigcup_{n \in \Z} \cat{T}^{\leq n} = \cat{T} = \bigcup_{n \in \Z} \cat{T}^{\geq n}.\]
In general, if a triangulated category $\cat{T}$ is equipped with a bounded weight structure, any object $X \in \cat{T}$ can be expressed as an iterative extension of finitely many graded pieces $X_i=C_i[i]$, where $C_i \in \cat{T}^{=0}$. These pieces may be arranged into a finite \emph{weight complex} for $X$, which we denote as follows:
\[\begin{tikzcd}[ampersand replacement=\&]
	{t(X):= \quad 0} \& {C_{n}} \& {C_{n-1}} \& \dots \& {C_{m+1}} \& {C_m} \& 0
	\arrow[from=1-1, to=1-2]
	\arrow["\delta_n", from=1-2, to=1-3]
	\arrow["\delta_{n-1}", from=1-3, to=1-4]
	\arrow["\delta_{m+2}", from=1-4, to=1-5]
	\arrow["\delta_{m+1}", from=1-5, to=1-6]
	\arrow[from=1-6, to=1-7]
\end{tikzcd}\]
We note that such weight complexes are in general neither unique nor functorial. The primary technique we will use to construct weight structures is provided by the following result.
\begin{thm}{(\cite[Theorem 4.3.2]{bondarko2010weight})} \label{thereom: bondarko2010weight}
\emph{Suppose that $\cat{T}$ is idempotent-complete and let $H$ be a collection of objects in $\cat{T}$ satisfying the following properties:
\begin{enumerate}\item Negativity: $\Hom_{\cat{T}}(X, Y[n])=0$ for all $X,Y \in H$ and $n \geq 1$.
\item Generation: The thick closure of $H$ is all of $\cat{T}$.
\end{enumerate}
Then there exists a unique weight structure on $\cat{T}$ whose heart $\cat{H}$ is the idempotent-completion of the additive subcategory generated by $H$.}
\end{thm}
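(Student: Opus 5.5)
The construction I have in mind is the standard one for building a weight structure from a negative generating family: produce the heart as a category of "weight complexes" and obtain the two halves $\cat{T}^{\leq 0}$, $\cat{T}^{\geq 0}$ as extension-closures of shifted copies of $H$. Let $\cat{H}$ be the idempotent completion (inside $\cat{T}$, which is idempotent-complete by hypothesis) of the additive subcategory generated by $H$. Negativity passes to $\cat{H}$, since finite sums and retracts preserve the vanishing of $\Hom(X,Y[n])$ for $n\geq 1$. I then define $\cat{T}^{\leq 0}$ to be the retract-closure of the extension-closure of $\bigcup_{i\geq 0}\cat{H}[-i]$, and $\cat{T}^{\geq 0}$ likewise from $\bigcup_{i\geq 0}\cat{H}[i]$. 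This matches the convention $\cat{T}^{\leq 0}\subseteq \cat{T}^{\leq 0}[1]$, $\cat{T}^{\geq 0}[1]\subseteq\cat{T}^{\geq 0}$ in the axioms above. Axioms (1) and (2) are then immediate by construction.

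For orthogonality (3), I first check the case $X\in\cat{H}[-i]$, $Y\in\cat{H}[j]$ with $i,j\geq 0$. Here $\Hom(X,Y[1])=\Hom(\cat{H},\cat{H}[i+j+1])=0$ by negativity. The vanishing then extends through extensions in either variable using the long exact $\Hom$-sequences, and through retracts trivially.

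The heart of the proof is the weight decomposition (4). Let $\cat{D}$ be the class of objects $X$ that admit, for every $n$, a triangle $B\to X\to A$ with $B\in \cat{T}^{\leq n}$ and $A\in\cat{T}^{\geq n+1}$. Objects of $\cat{H}[k]$ clearly lie in $\cat{D}$, taking either $B=X$ or $A=X$, and $\cat{D}$ is stable under shifts. The main step is to show that $\cat{D}$ is closed under cones. Given $X\to Y\to Z$ with decompositions of $X$ and $Y$, I would use the octahedral axiom together with orthogonality, which forces the relevant maps between weight-truncations to factor, and build a decomposition of $Z$ as in Bondarko's "weight Postnikov tower" argument; concretely, one realises a decomposition of the cone from the cones of the pieces. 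This is the main obstacle, since decompositions are not functorial. Orthogonality is what makes the needed lifts exist, though it does not make them unique. To handle retracts, which generation by the thick closure also requires, I would use the idempotent-completeness of $\cat{T}$. A decomposition of $X\oplus X'$ yields one for $X$ by Bondarko's trick: add the decomposition of $X'[1]\oplus X'$, or equivalently use the idempotent-completeness of $\cat{T}^{\leq 0}$ and $\cat{T}^{\geq 0}$. Together these show that $\cat{D}$ is thick and contains $H$, so $\cat{D}=\cat{T}$ by generation.

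Finally, I would identify the heart and prove uniqueness. An object of $\cat{T}^{=0}$ has weight decompositions showing that it is a retract of an object of $\cat{H}$, so $\cat{T}^{=0}=\cat{H}$ because $\cat{H}$ is idempotent-complete. For uniqueness, in any weight structure with heart $\cat{H}$ the subcategories $\cat{T}^{\leq 0}$ and $\cat{T}^{\geq 0}$ are extension-closed and contain the above generators, so they contain our classes. Conversely, they are determined by orthogonality: $\cat{T}^{\geq 0}$ is the right orthogonal of $\cat{T}^{\leq -1}$. This pins them down, and they coincide with our construction.
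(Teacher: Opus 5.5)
The paper does not prove this statement; it quotes it from \cite{bondarko2010weight}. So the relevant comparison is with Bondarko's argument, whose overall shape you reproduce. Several parts of your outline are fine: orthogonality from negativity, the identification of the heart, and uniqueness via $\cat{T}^{\geq 0}=(\cat{T}^{\leq -1})^{\perp}$. The cone step also works, provided you combine a decomposition of $X$ at level $n-1$ with one of $Y$ at level $n$. Then $\Hom(\cat{T}^{\leq n-1},\cat{T}^{\geq n+1})=0$ lifts $B_X\to X\to Y$ to a map $B_X\to B_Y$, and the $3\times 3$ lemma gives a level-$n$ decomposition of the cone. This shows that every object of the triangulated hull $\mathrm{tria}(\cat{H})$ admits weight decompositions.

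The gap is the passage from $\mathrm{tria}(\cat{H})$ to its retracts. This is the only place where idempotent-completeness of $\cat{T}$ and the thick-closure hypothesis enter, and it is not formal. Neither of your suggested tricks works.

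\begin{itemize}
\item \emph{Using retract-closedness of the halves.} Weight decompositions are not functorial, so an idempotent does not pass to them. Let $B\xrightarrow{b}X\oplus X'\xrightarrow{a}A$ be a decomposition and $e$ the projection onto $X$. Orthogonality only gives some $e_B\colon B\to B$ with $be_B=eb$. This $e_B$ is unique only up to maps $B\to A[-1]\to B$, and $e_B^2-e_B$ merely factors through $A[-1]\to B$. Since $\Hom(B,A[-1])$ runs from weights $\leq 0$ to weights $\geq 0$, it need not vanish. So there is no idempotent on $B$ or $A$ to which retract-closedness of $\cat{T}^{\leq 0}$, $\cat{T}^{\geq 0}$ could be applied.
\item \emph{The naive candidate.} Set $C=\cone(B\to X\oplus X'\to X)$. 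By the octahedral axiom, $C$ sits in a triangle $A\to C\to X'[1]$, so it need not lie in $\cat{T}^{\geq 1}$.
\item \emph{Adding decompositions of auxiliary objects} such as $X'\oplus X'[1]$. Shifts, sums and cones of objects of $\mathrm{tria}(\cat{H})$ stay in $\mathrm{tria}(\cat{H})$. This can only reach $X$ if you already know that $\mathrm{tria}(\cat{H})$ is closed under retracts in $\cat{T}$, which is exactly what has to be shown. It holds a posteriori, but it is not a consequence of closure under cones; by Thomason's classification of dense subcategories it is a $K_0$-type statement.
\end{itemize}

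What you need is a genuine argument that idempotents on objects of $\mathrm{tria}(\cat{H})$ split inside $\mathrm{tria}(\cat{H})$. This is the analogue of the idempotent-completeness of $\K^b(\cat{A})$ for an idempotent-complete additive $\cat{A}$. Equivalently, you must show that the weight structure on $\mathrm{tria}(\cat{H})$ extends to its idempotent completion $\cat{T}$ with halves given by retract-closures. This is the substantive input behind the cited theorem; it is treated by Bondarko, and in general by Bondarko--Sosnilo. You must either prove it or cite it.
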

In this article, we will consider bounded weight structures on tt-categories $\cat{K}^c$ that admit a stable symmetric monoidal $\infty$-categorical enhancement. When such a bounded weight structure is compatible with the tensor structure--meaning its heart $\cat{H}=(\cat{K}^c)^{=0}$ contains the tensor unit $\unit$ and it is closed under tensor products--then by \cite{sosnilo2019theorem} and \cite{aoki2020weight}, we may define a \emph{weight complex functor}
\[t: \cat{K}^c \to \K^b(\cat{H}) \]
which is a conservative tt-functor. 
\begin{remark}\label{remark: weight complex functors and local tt-categories}
A useful consequence of the above result is that if $\K^b(\cat{H})$ is a \emph{local} tt-category (that is, $0$ is a prime tt-ideal)--for instance, if $\cat{H}$ is semisimple--then the conservativity of the weight complex functor implies that $\cat{K}^c$ is also local.
\end{remark}
\subsection{Motivic recollections}{(\cite{voevodsky2000triangulated})}
Throughout the paper, let $k$ be a perfect field of exponential characteristic $e$, and let $R$ be a commutative ring such that $e \in R^\times$. We denote by 
\[\DM(k ; R)\] 
the \emph{big tensor-triangulated category of Voevodsky motives}. Recall that $\DM(k ; R)$ admits a stable symmetric monoidal $\infty$-categorical enhancement $\mathcal{DM}(k ; R)$ by \cite{rondigs2008modules} and \cite{hoyois2017motivic}; moreover, it is rigidly-compactly generated by the motives of smooth varieties. We denote the tensor unit by $\T := \M(\Spec(k))$ and the rigid full subcategory of compact objects by 
\[\DMgm(k ; R).\]
For any object $X$, we write $X(n)[m]$ for the twisted suspension $X \otimes \T(n)[m]$ and use the abbreviation $X\{n\} := X(n)[2n]$ for the \emph{pure shift} of weight $n$.
\subsection*{Motives of Čech simplicial schemes}{(\cite{vishik2022isotropic})}
For a smooth variety $P$ over $k$, we write $\check{C}(P)$ for its \emph{Čech simplicial scheme}, and we denote its motive by $\mathcal{X}_{P}$. The canonical projection $\check{C}(P) \to \Spec(k)$ induces a structure morphism $\mathcal{X}_{P} \to \T$, giving rise to the following idempotent triangle
\[ \mathcal{X}_{P} \to \T \to \tilde{\mathcal{X}}_{P} \to \mathcal{X}_{P}[1] \]
in $\DM(k ; R)$. Furthermore, for any two smooth varieties $P$ and $Q$ over $k$, we have the standard identifications
\[ \mathcal{X}_{P} \otimes \mathcal{X}_{Q} \cong \mathcal{X}_{P \times Q}, \quad \tilde{\mathcal{X}}_{P} \otimes \tilde{\mathcal{X}}_{Q} \cong \tilde{\mathcal{X}}_{P \sqcup Q}. \]
Recall from \cite{vishik2022isotropic} that for any two finitely generated extensions $E$ and $E'$ of $k$, with smooth models $Q$ and $Q'$ respectively, and for any prime number $p$, we write
\[ E/k \pdom E'/k \]
if $Q'_{E}$ is $p$-isotropic. If $E \pdom E'$ and $E' \pdom E$, we write $E \peq E'$. This partial ordering extends naturally to the motives of Čech simplicial schemes of smooth varieties: in particular, we write
\[ \mathcal{X}_{P} \pdom \mathcal{X}_{Q} \]
if one (and therefore all) of the following equivalent conditions is satisfied:
\begin{enumerate}
    \item $\mathcal{X}_{P} \otimes \mathcal{X}_{Q} \cong \mathcal{X}_{P}$.
    \item $\tilde{\mathcal{X}}_{P} \otimes \tilde{\mathcal{X}}_{Q} \cong \tilde{\mathcal{X}}_{Q}$.
    \item $Q$ is $p$-isotropic over the function field of every connected component of $P$. In particular, if $P$ and $Q$ are connected smooth projective varieties, then
    \[ \mathcal{X}_{P} \pdom \mathcal{X}_{Q} \iff k(P) \pdom k(Q). \]
    \item The structure map $\mathcal{X}_{P} \to \T$ factors through the structure map $\mathcal{X}_{Q} \to \T$.
\end{enumerate}
\subsection*{Pfister quadrics and Rost motives}{(\cite{vishik1997integral})}
Let $\K^M_*(k)/2$ be the \emph{Milnor K-theory modulo} $2$ of $k$, and $\alpha \in \K^M_n(k)/2$ a non-zero pure symbol of degree $n \geq 1$. We denote by $q_{\alpha}$ the associated $n$-fold \emph{Pfister form} and by $Q_{\alpha}$ the corresponding \emph{Pfister quadric}. 
By \cite{rost1998motive}, we know that 
\[\M(Q_{\alpha}) \cong \M_{\alpha}\otimes \M(\PP^{d_{n-1}}),\] 
where $d_n=2^{n}-1$. The indecomposable factor  $\M_{\alpha}$ is called a \emph{Rost motive}. Over the algebraic closure, the Rost motive splits as 
\[M_{\alpha}|_{\bar{k}} \cong \T \oplus \T\{d_{n-1}\}.\] 
Over the base field, this splitting is filtered by natural maps $\T\{d_{n-1} \} \to M_\alpha \to \T$ whose composition is zero, giving rise to the following diamond of distinguished triangles in $\DM(k ; \F_2)$,
\[\begin{tikzcd}[ampersand replacement=\&,sep=small]
	\&\& \T \\
	\\
	{\M_{\alpha}} \&\& {R_\alpha} \&\& {\tilde{\M}_\alpha} \\
	\\
	\&\& {\T\{d_{n-1}\}}
	\arrow["{[1]}", from=1-3, to=3-3]
	\arrow["{[1]}", from=1-3, to=3-5]
	\arrow[""{name=0, anchor=center, inner sep=0}, from=3-1, to=1-3]
	\arrow[from=3-3, to=3-1]
	\arrow[from=3-3, to=3-5]
	\arrow[""{name=1, anchor=center, inner sep=0}, "{[1]}", from=3-5, to=5-3]
	\arrow[from=5-3, to=3-1]
	\arrow[from=5-3, to=3-3]
	\arrow["\star"'{pos=0.6}, shift left=3, draw=none, from=0, to=3-3]
	\arrow["\star", shift right=3, draw=none, from=1, to=3-3]
\end{tikzcd}\]
where $\tilde{\M}_{\alpha}$ denotes the \emph{(completely) reduced Rost motive}. 

Let $\mathcal{X}_\alpha := \mathcal{X}_{Q_\alpha}$ be the motive of the Čech simplicial scheme of the Pfister quadric $Q_\alpha$, and let $\tilde{\mathcal{X}}_\alpha$ be its reduced motive. Recall that $\mathcal{X}_\alpha \otimes \M_\alpha \cong \M_\alpha$ and $\tilde{\mathcal{X}}_\alpha \otimes \tilde{\M}_\alpha \cong \tilde{\M}_\alpha$. 
Consequently, from the idempotent triangle of $\mathcal{X}_\alpha$ we deduce that $\M_{\alpha} \otimes \tilde{\M}_\alpha \cong 0$. Tensoring the above diamond by $\mathcal{X}_\alpha$ and $\tilde{\mathcal{X}}_\alpha$ respectively, yields the following two distinguished triangles in $\DM(k ; \F_2)$:
\[\mathcal{X}_\alpha\{d_{n-1}\} \to \M_{\alpha} \to \mathcal{X}_\alpha \to \mathcal{X}_\alpha(d_{n-1})[2d_{n-1}+1];\] 
\[\tilde{\mathcal{X}}_\alpha\{d_{n-1}\} \to \tilde{\mathcal{X}}_\alpha[-1] \to \tilde{\M}_{\alpha} \to \tilde{\mathcal{X}}_\alpha(d_{n-1})[2d_{n-1}+1].\]  

\section{Mixed motives of quadrics}
We introduce the main categories of interest of this article.
\begin{defn}
The big tt-category of \emph{mixed motives of quadrics} 
\[\DQM(k;R)\] 
is defined as the full localising subcategory of $\DM(k, R)$ generated by all twists of the motives of smooth projective quadrics over $k$. We denote by 
\[\DQMgm(k;R)\] 
its full tt-subcategory of compact objects.
\end{defn}
From now on, we restrict our attention to the case of primary interest, $R=\F_2$. Nevertheless, it is clear that the following definitions readily generalise to more general coefficient rings.
\begin{defn}
For any non-zero pure symbol $\alpha \in \K^M_*(k)/2$, we denote by
\[\DQM_{\tilde{\mathcal{X}}_\alpha}(k ; \F_2)\] 
the full localising subcategory of $\DQM(k ; \F_2)$ consisting of objects that are annihilated upon tensoring with $\mathcal{X}_\alpha$. Furthermore, we define 
\[\DTM_{\tilde{\mathcal{X}}_\alpha}(k ; \F_2)\] 
to be the full localising subcategory of $\DQM_{\tilde{\mathcal{X}}_\alpha}(k ; \F_2)$ generated by twists of $\tilde{\mathcal{X}}_\alpha$.
\end{defn}
Observe that $\tilde{\mathcal{X}}_\alpha$ is a right idempotent, appearing in the idempotent triangle
\[\mathcal{X}_\alpha \to \T \to \tilde{\mathcal{X}}_\alpha \to \mathcal{X}_\alpha[1].\]
In particular $\DQM_{\tilde{\mathcal{X}}_\alpha}(k ; \F_2)= \tilde{\mathcal{X}}_\alpha \otimes \DQM(k ; \F_2)$ is a smashing localisation, whose kernel is the localising tensor ideal generated by the Rost motive $\M_\alpha$. This implies that $\DQM_{\tilde{\mathcal{X}}_\alpha}(k ; \F_2)$ is a rigidly-compactly generated tt-category, and its compact generators are twists of $\tilde{\mathcal{X}}_\alpha \otimes \M(Q)$, where $Q$ is a smooth projective quadric over $k$. We denote by 
\[\DQM^{\geom}_{\tilde{\mathcal{X}}_\alpha}(k ; \F_2)\] 
the full tt-subcategory of compact objects, and 
\[\DTM^{\geom}_{\tilde{\mathcal{X}}_\alpha}(k ; \F_2)\] 
the full tt-subcategory generated by twists of $\tilde{\mathcal{X}}_\alpha$.

\subsection{Mixed motives of quadrics over real closed fields}
From now on we shall denote by $F$ a real closed field. 

Recall that $\K^M_*(F)/2$ is generated in degree $1$ by the pure symbol $\rho=\{-1\}$, and it is isomorphic to the polynomial ring $\F_2[\rho]$.

The first advantage of working over real closed fields is that Tate motives and Rost motives define a set of compact generators for $\DQM(F ; \F_2)$: this follows from the fact that over real closed fields, anisotropic quadrics are \emph{excellent} \cite{vishik2011excellent, rost1990some}.

We now begin to define the geometric tt-functors that will constitute our jointly conservative family. First, we shall denote by $Q_1$ the zero-dimensional Pfister quadric
\[\{x^2+y^2=0\}\subset \mathbb{P}_F^2,\]
and we denote by $\mathcal{X}_1$ the motive of its Čech simplicial scheme, and by $\tilde{\mathcal{X}}_1$ its reduced motive. Since $F$ is real closed, then the function field $F(Q_1) \cong \overline{F}$.
\begin{lemma}[Definition of $\pi_1$] \label{lemma: definition of pi1}
\emph{The restriction of the base-change functor $\DM(F ; \F_2) \to \DM(\overline{F}; \F_2)$
to $\DQM(F ; \F_2)$ defines a geometric tt-functor
\[\pi_1: \DQM(F ; \F_2) \to \DTM(\overline{F}; \F_2) \] }  
\end{lemma}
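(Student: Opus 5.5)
The argument has two parts. First, the base-change functor $\DM(F;\F_2)\to\DM(\overline{F};\F_2)$ is a geometric tt-functor. Second, it sends the localising subcategory $\DQM(F;\F_2)$ into $\DTM(\overline{F};\F_2)$. Throughout, $\DTM(\overline{F};\F_2)$ denotes the localising subcategory of $\DM(\overline{F};\F_2)$ generated by the Tate twists $\T(n)$.

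For the first part I would use the standard fact that extension of scalars $\DM(F;\F_2)\to\DM(\overline{F};\F_2)$ along $\Spec(\overline{F})\to\Spec(F)$ (that is, pull-back $f^*$) is symmetric monoidal and exact. It is a left adjoint, so it preserves all coproducts, and it sends $\M(X)$ to $\M(X_{\overline{F}})$, so it preserves compact generators. Since $\overline{F}/F$ is a finite separable extension of degree $2$, one may also regard $f^*$ as having the right adjoint $f_*$, which itself preserves coproducts; this is not needed but confirms the setting. Thus $f^*$ is a geometric tt-functor on the big categories. The localising subcategory $\DQM(F;\F_2)$ is a tensor subcategory containing $\T$: it is closed under tensor products because products of quadrics are handled by the generators below, or more simply because the subcategory generated by a tensor-closed set of generators is tensor-closed. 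Hence the restriction of $f^*$ to it is again a coproduct-preserving tt-functor.

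For the second part, the preimage of $\DTM(\overline{F};\F_2)$ under $f^*$ is a localising subcategory, because $f^*$ is exact and preserves coproducts and $\DTM(\overline{F};\F_2)$ is localising. It therefore suffices to check that $f^*$ sends the generators $\M(Q)(n)[m]$ of $\DQM(F;\F_2)$ into $\DTM(\overline{F};\F_2)$, where $Q$ ranges over smooth projective quadrics. Now $f^*\M(Q)=\M(Q_{\overline{F}})$. Over the algebraically closed field $\overline{F}$ every quadratic form of dimension at least $2$ is isotropic, and indeed completely split. Rost's decomposition of motives of isotropic quadrics, $\M(Q)\cong \T\oplus \M(Q')\{1\}\oplus\T\{\dim Q\}$ for $Q$ isotropic with $Q'$ the quadric of the anisotropic-reduced form, then applies inductively. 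The base cases are $0$-dimensional split quadrics ($\T\oplus\T$) and the point. This shows that $\M(Q_{\overline{F}})$ is a finite direct sum of pure Tate motives $\T\{i\}$, and hence that its twists are Tate.

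Alternatively, one can argue with the generators noted earlier: over $F$ real closed, $\DQM(F;\F_2)$ is generated by Tate motives and Rost motives $\M_\alpha$. Using $\M_\alpha|_{\overline{F}}\cong\T\oplus\T\{d_{n-1}\}$, these land in $\DTM(\overline{F};\F_2)$.

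The main obstacle is only bookkeeping. One must make sure "$\DTM(\overline{F};\F_2)$" is the localising subcategory, and that the restricted functor is a functor between rigidly-compactly generated categories. For that, note that $\DTM(\overline{F};\F_2)$ is compactly generated by the rigid objects $\T(n)$. The same holds for $\DQM(F;\F_2)$, since motives of smooth projective varieties are rigid. So the target and source are of the type considered in the tt-recollections, and $\pi_1$ is geometric.
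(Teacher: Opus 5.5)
Your proposal is correct and follows essentially the same argument as the paper. Base change preserves coproducts because it has a right adjoint, and motives of quadrics split into pure Tate motives over $\overline{F}$, so the localising preimage of $\DTM(\overline{F};\F_2)$ contains every generator of $\DQM(F;\F_2)$. One small slip, which does not affect the conclusion: in Rost's decomposition, $Q'$ is the quadric of the complement $q'$ in $q\cong\mathbb{H}\perp q'$ (one hyperbolic plane removed), not of the anisotropic part of $q$.
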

\begin{proof}
Since the motives of quadrics split into direct sums of Tate motives over the algebraic closure, the essential image of $\pi_1$ is contained in $\DTM(\overline{F}; \F_2)$. Because coproducts in $\DQM(F ; \F_2)$ are computed in the ambient category $\DM(F ; \F_2)$ and the base-change functor admits a right adjoint, $\pi_1$ is a geometric tt-functor.
\end{proof}
Next, we shall denote by $Q_2$ the imaginary conic defined by
\[\{x^2+y^2+z^2=0\} \subset \mathbb{P}_F^2,\]
and we denote by $\mathcal{X}_2$ the motive of its Čech simplicial scheme, and by $\tilde{\mathcal{X}}_2$ its reduced motive.
\begin{lemma}[Definition of $\pi_2$]\label{lemma: definition of pi2}
\emph{Let $\rho=\{-1\} \in \K_1^M(F(Q_{2}))/2$, then the restriction of the composite
\[\DM(F ; \F_2) \to \DM(F(Q_{2}); \F_2) \to \tilde{\mathcal{X}}_{\rho} \otimes \DM(F(Q_{2}); \F_2),\] 
to $\DQM(F ; \F_2)$ defines a geometric tt-functor}
\[\pi_2: \DQM(F ; \F_2) \to \DTM_{\tilde{\mathcal{X}}_{\rho}}(F(Q_{2}); \F_2).\]
\end{lemma}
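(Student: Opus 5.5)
The plan has two parts. First, the composite is a geometric tt-functor. Second, its restriction to $\DQM(F;\F_2)$ lands in $\DTM_{\tilde{\mathcal{X}}_{\rho}}(F(Q_2);\F_2)$.

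\emph{The composite is geometric.} Base change $\DM(F;\F_2)\to \DM(F(Q_2);\F_2)$ is a tensor functor. As in Lemma \ref{lemma: definition of pi1}, it preserves coproducts because it has a right adjoint; concretely it is the colimit over smooth models of pullbacks, each of which has a right adjoint. The second functor is $\tilde{\mathcal{X}}_\rho\otimes -$. Since $\mathcal{X}_\rho\to \T\to\tilde{\mathcal{X}}_\rho\to\mathcal{X}_\rho[1]$ is an idempotent triangle, this is a smashing localisation. It therefore preserves coproducts and is a tt-functor onto $\tilde{\mathcal{X}}_\rho\otimes\DM(F(Q_2);\F_2)$, whose unit is $\tilde{\mathcal{X}}_\rho$. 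Coproducts in $\DQM(F;\F_2)$ are computed in $\DM(F;\F_2)$, so the restriction is again a coproduct-preserving tt-functor.

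\emph{The image lies in $\DTM_{\tilde{\mathcal{X}}_{\rho}}(F(Q_2);\F_2)$.} The target is a localising subcategory, and the functor is exact and preserves coproducts. It therefore suffices to check the compact generators of $\DQM(F;\F_2)$. Over a real closed field these are the Tate motives and the Rost motives $\M_\alpha$ for $\alpha=\rho^n$, $n\geq 1$. The Tate motives $\T(i)[j]$ go to twists of $\tilde{\mathcal{X}}_\rho$, which lie in the target by definition. For a Rost motive I split into cases according to $n$.

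\emph{Case $n\leq 2$.} Over $E=F(Q_2)$ the conic $Q_2$ acquires a rational point. Hence the form $\langle 1,1,1\rangle$ becomes isotropic over $E$, so the Pfister form $\langle\langle -1,-1\rangle\rangle$ is isotropic, hence hyperbolic, and $\rho^2=0$ in $\K^M_2(E)/2$. A Pfister form whose symbol vanishes is split, so $\M_{\rho^2}|_E\cong \T\oplus\T\{1\}$. Its image is $\tilde{\mathcal{X}}_\rho\oplus\tilde{\mathcal{X}}_\rho\{1\}$, which lies in the target. For $n=1$, the symbol $\rho$ is still nonzero over $E$, since $E$ is formally real: the real conic has real points in the function-field sense, or equivalently $-1$ is not a sum of squares in $E$. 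Then $\M_{\rho}|_E$ is the Rost motive of $\rho$ over $E$, and $\tilde{\mathcal{X}}_\rho\otimes \M_\rho\cong 0$, because $\M_\alpha\otimes\tilde{\M}_\alpha$-type vanishing holds, or directly because $\mathcal{X}_\rho\otimes\M_\rho\cong\M_\rho$.

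\emph{Case $n\geq 3$.} The symbol $\rho^n$ is divisible by $\rho^2=0$, so it vanishes over $E$. The Pfister form is then hyperbolic, and the Rost motive splits as $\T\oplus\T\{d_{n-1}\}$. Its image is again a sum of twists of $\tilde{\mathcal{X}}_\rho$. The same holds for the motives of quadrics $\M(Q_{\rho^n})\cong\M_{\rho^n}\otimes\M(\PP^{d_{n-1}})$, if one prefers to work with those generators.

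The main obstacle is making rigorous the splitting of $\M_{\alpha}|_E$ when $\alpha|_E=0$. I would argue as follows: $Q_\alpha$ acquires a rational point over $E$, so by Rost's decomposition its motive is Tate. Alternatively, $\mathcal{X}_\alpha|_E\cong\T$, and then the first triangle of the preliminaries splits, since the connecting map lies in a motivic cohomology group of negative weight difference. This second route needs checking, and I would fall back on Rost's theorem. Since the target is localising, closure under extensions and summands then handles everything generated by these objects.
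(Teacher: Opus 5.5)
Your proof is correct and follows the paper's route: geometricity comes from the right adjoints, and the image is checked on generators, with $\pi_2(\M_1)\cong(\tilde{\mathcal{X}}_1\otimes\M_1)|_{F(Q_2)}=0$ and $\pi_2(\M_n)\cong\tilde{\mathcal{X}}_\rho\oplus\tilde{\mathcal{X}}_\rho\{d_{n-1}\}$ for $n\geq 2$; your second splitting route also works, since the connecting map lies in $\Hom(\T,\T(d_{n-1})[2d_{n-1}+1])$, which vanishes over a field. One side remark is false but harmless: $F(Q_2)$ is not formally real, since $-1=(x/z)^2+(y/z)^2$ there, and $\rho$ stays nonzero over $F(Q_2)$ for a different reason, namely that $F$ is algebraically closed in the function field of the geometrically integral conic $Q_2$; in any case the lemma never uses this, because $\mathcal{X}_\rho\otimes\M_\rho\cong\M_\rho$ is simply the base change of the same identity over $F$.
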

\begin{proof}
First, it is clear that $\pi_2$ is geometric since coproducts in $\DQM(F ; \F_2)$ are computed in the ambient category $\DM(F ; \F_2)$, and the defining functor is a composition of functors admitting right adjoints. Moreover,
from the equivalence 
\[\tilde{\mathcal{X}}_{\rho} \cong \tilde{\mathcal{X}}_1|_{F(Q_{2})},\] 
we see that the following diagram commutes. 
\[\begin{tikzcd}[ampersand replacement=\&]
	{\DM(F ; \F_2)} \&\& { \DM(F(Q_{2}); \F_2)} \\
	{\tilde{\mathcal{X}}_1\otimes\DM(F ; \F_2)} \&\& {\tilde{\mathcal{X}}_\rho\otimes\DM(F(Q_{2}); \F_2)}
	\arrow["{(-)|_{F(Q_{2})}}", from=1-1, to=1-3]
	\arrow["{\tilde{\mathcal{X}}_1\otimes -}"', from=1-1, to=2-1]
	\arrow["{\tilde{\mathcal{X}}_\rho\otimes -}", from=1-3, to=2-3]
	\arrow["{(-)|_{F(Q_{2})}}", from=2-1, to=2-3]
\end{tikzcd}\] 
From this we may observe that
\[\pi_2(\M_n)=\begin{cases}
    0 &  n=1\\
    \tilde{\mathcal{X}}_{\rho} \oplus  \tilde{\mathcal{X}}_{\rho} \{d_{n-1}\} &  n \geq 2 
\end{cases}\]
from which it follows that the essential image of $\pi_2$ is contained in $\DTM_{\tilde{\mathcal{X}}_{\rho}}(F(Q_{2}); \F_2)$.
\end{proof}
For a pure symbol $\alpha=\rho^n$ with $n \geq 3$, we shall denote the associated Pfister quadric by $Q_{n}$, the motive of its Čech simplicial scheme by $\mathcal{X}_n$, and its completely reduced motive by $\tilde{\mathcal{X}}_n$. Observe that since the imaginary conic $Q_2$ is a Pfister neighbour of the Pfister quadric $Q_{\rho^2}$, then 
\[\mathcal{X}_2:= \mathcal{X}_{Q_2} \cong \mathcal{X}_{\rho^2}, \quad  \tilde{\mathcal{X}}_2:= \tilde{\mathcal{X}}_{Q_2} \cong \tilde{\mathcal{X}}_{\rho^2}.\]
Furthermore, for any $1 \leq n \leq m$, we have that $\mathcal{X}_n \twodom \mathcal{X}_m$. 
\begin{lemma}[Definition of $\pi_n$ for $n \geq 3$]\label{lemma: definition of pin}
\emph{For $n \geq 3$, let $\rho^{n-1} \in \K_{n-1}^M(F(Q_{n}))/2$, then the restriction of the composite
\[\DM(F ; \F_2) \to \DM(F(Q_{n}); \F_2) \to \tilde{\mathcal{X}}_{\rho^{n-1}} \otimes \DM(F(Q_{n}); \F_2),\]
to $\DQM(F ; \F_2)$ defines a geometric tt-functor}
\[\pi_n: \DQM(F ; \F_2) \to \DTM_{\tilde{\mathcal{X}}_{\rho^{n-1}}}(F(Q_{n}); \F_2).\]
\end{lemma}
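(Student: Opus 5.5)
The argument follows the template of Lemma~\ref{lemma: definition of pi2}, in two parts: geometricity, and the essential image.

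\emph{Geometricity.} Coproducts in $\DQM(F;\F_2)$ are computed in $\DM(F;\F_2)$. Base change to $F(Q_n)$ is a coproduct-preserving tt-functor, since it has a right adjoint given by a filtered colimit of pushforwards along smooth models. The localisation $\tilde{\mathcal{X}}_{\rho^{n-1}}\otimes-$ is smashing, because it comes from an idempotent triangle. Hence the composite is a geometric tt-functor. The only subtlety is that the target should be the tt-category $\tilde{\mathcal{X}}_{\rho^{n-1}}\otimes \DM(F(Q_n);\F_2)$, whose unit is $\tilde{\mathcal{X}}_{\rho^{n-1}}$.

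\emph{Essential image.} Since $\DQM(F;\F_2)$ is compactly generated by twists of Tate motives and Rost motives $\M_m=\M_{\rho^m}$ (excellence of quadrics over real closed fields), and the target subcategory is localising, it suffices to compute $\pi_n(\M_m)$ for every $m\geq 1$ and check it lies in the localising subcategory generated by twists of $\tilde{\mathcal{X}}_{\rho^{n-1}}$. Tate motives go to twists of the unit $\tilde{\mathcal{X}}_{\rho^{n-1}}$. The key input is the behaviour of the Pfister forms $\langle\langle -1\rangle\rangle^{\otimes m}$ over $F(Q_n)$.

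\begin{itemize}
\item For $m\geq n$, the form $q_{\rho^m}$ contains $q_{\rho^n}$ as a subform, so $Q_m$ is isotropic over $F(Q_n)$. An isotropic Pfister quadric is hyperbolic, so $\M_m|_{F(Q_n)}\cong \T\oplus\T\{d_{m-1}\}$. Tensoring with $\tilde{\mathcal{X}}_{\rho^{n-1}}$ gives $\tilde{\mathcal{X}}_{\rho^{n-1}}\oplus\tilde{\mathcal{X}}_{\rho^{n-1}}\{d_{m-1}\}$, which lies in the target.
\item For $m\leq n-1$, the symbol $\rho^m$ stays nonzero over $F(Q_n)$. By the Arason--Pfister Hauptsatz, or by excellence, an anisotropic $m$-fold Pfister form of dimension $2^m<2^n$ stays anisotropic over the function field of the $n$-fold Pfister quadric. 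Moreover $\rho^{m}$ divides $\rho^{n-1}$, so $\mathcal{X}_{\rho^{n-1}}\twodom\mathcal{X}_{\rho^m}$, that is, $\mathcal{X}_{\rho^{n-1}}\otimes\mathcal{X}_{\rho^m}\cong\mathcal{X}_{\rho^{n-1}}$. Dually, $\tilde{\mathcal{X}}_{\rho^{n-1}}\otimes\tilde{\mathcal{X}}_{\rho^m}\cong\tilde{\mathcal{X}}_{\rho^{n-1}}$, which forces $\tilde{\mathcal{X}}_{\rho^{n-1}}\otimes\mathcal{X}_{\rho^m}=0$. Since $\M_{\rho^m}\cong\mathcal{X}_{\rho^m}\otimes\M_{\rho^m}$, we get $\pi_n(\M_m)=0$.
\end{itemize}

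I expect the main obstacle to be getting the direction of the ordering $\twodom$ right in the case $m\leq n-1$. One must check directly that $Q_{\rho^m}$ becomes isotropic over the function field of $Q_{\rho^{n-1}}$, which holds because $q_{\rho^m}$ is a subform of $q_{\rho^{n-1}}$. This identity of Čech motives is what produces the vanishing. Identifying $\rho^{n-1}$ over $F(Q_n)$ with the restriction of $\rho^{n-1}$ from $F$ also requires a commutative square as in Lemma~\ref{lemma: definition of pi2}. Once these are in place, the image computation closes the proof.
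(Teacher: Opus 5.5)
Your route is the paper's. Geometricity holds because base change and the smashing localisation $\tilde{\mathcal{X}}_{\rho^{n-1}}\otimes-$ both preserve coproducts. The image is then checked on generators, and you handle the case $m\geq n$ correctly: there $Q_m$ becomes isotropic and $\M_m$ splits.

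The gap is in the case $m\leq n-1$. It is exactly the direction issue you flagged, resolved the wrong way. The assertion $\mathcal{X}_{\rho^{n-1}}\otimes\mathcal{X}_{\rho^m}\cong\mathcal{X}_{\rho^{n-1}}$, i.e.\ that $Q_{\rho^m}$ becomes isotropic over the function field of $Q_{\rho^{n-1}}$, is false for $m<n-1$. When $q_{\rho^m}$ is a subform of $q_{\rho^{n-1}}$, isotropy of the \emph{smaller} form forces isotropy of the \emph{larger} one, not conversely. For example, $Q_1$ stays anisotropic over $F(Q_2)$, which is precisely why $\rho\neq 0$ in $\K^M_1(F(Q_2))/2$ in the definition of $\pi_2$. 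Passing to $F(Q_3)$ does not help, since $F(Q_3\times Q_2)$ is purely transcendental over $F(Q_2)$. Your ``dually'' line also does not follow from the line before it. By the equivalent condition (2) in the preliminaries, $\mathcal{X}_{\rho^{n-1}}\twodom\mathcal{X}_{\rho^m}$ would give $\tilde{\mathcal{X}}_{\rho^{n-1}}\otimes\tilde{\mathcal{X}}_{\rho^m}\cong\tilde{\mathcal{X}}_{\rho^m}$, which does not annihilate $\mathcal{X}_{\rho^m}$.

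The repair is to reverse the ordering. Since $q_{\rho^m}\subset q_{\rho^{n-1}}$, the quadric $Q_{\rho^{n-1}}$ is isotropic over $F(Q_{\rho^m})$. Hence $\mathcal{X}_{\rho^m}\twodom\mathcal{X}_{\rho^{n-1}}$, i.e.\ $\mathcal{X}_{\rho^m}\otimes\mathcal{X}_{\rho^{n-1}}\cong\mathcal{X}_{\rho^m}$. Tensoring the triangle $\mathcal{X}_{\rho^{n-1}}\to\T\to\tilde{\mathcal{X}}_{\rho^{n-1}}\to\mathcal{X}_{\rho^{n-1}}[1]$ with $\mathcal{X}_{\rho^m}$ gives $\tilde{\mathcal{X}}_{\rho^{n-1}}\otimes\mathcal{X}_{\rho^m}=0$. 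This is also the true reason behind your isomorphism $\tilde{\mathcal{X}}_{\rho^{n-1}}\otimes\tilde{\mathcal{X}}_{\rho^m}\cong\tilde{\mathcal{X}}_{\rho^{n-1}}$. Then $\M_m\cong\mathcal{X}_{\rho^m}\otimes\M_m$ yields $\pi_n(\M_m)=0$.

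This is the paper's relation $\mathcal{X}_k\twodom\mathcal{X}_l$ for $k\leq l$. The paper applies it over $F$, in the form $\tilde{\mathcal{X}}_{n-1}\otimes\M_m=0$ for $m\leq n-1$. It then transports this through the commutative square coming from $\tilde{\mathcal{X}}_{\rho^{n-1}}\cong\tilde{\mathcal{X}}_{n-1}|_{F(Q_n)}$. Done that way, your appeal to the Hauptsatz to keep $\rho^m$ non-zero over $F(Q_n)$ is unnecessary: the vanishing holds whether or not $\M_m$ stays anisotropic.
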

\begin{proof}
The fact that $\pi_n$ is geometric follows from the same reasoning as in the proof of the previous lemma. Similarly, because 
\[\tilde{\mathcal{X}}_{\rho^{n-1}} \cong \tilde{\mathcal{X}}_{n-1}|_{F(Q_{n})},\] 
then from the commutativity of the following diagram
\[\begin{tikzcd}[ampersand replacement=\&]
	{\DM(F ; \F_2)} \&\& { \DM(F(Q_n); \F_2)} \\
	{\tilde{\mathcal{X}}_{n-1}\otimes\DM(F ; \F_2)} \&\& {\tilde{\mathcal{X}}_{\rho^{n-1}}\otimes\DM(F(Q_{n}); \F_2)}
	\arrow["{(-)|_{F(Q_n)}}", from=1-1, to=1-3]
	\arrow["{\tilde{\mathcal{X}}_{n-1}\otimes -}"', from=1-1, to=2-1]
	\arrow["{\tilde{\mathcal{X}}_{\rho^{n-1}}\otimes -}", from=1-3, to=2-3]
	\arrow["{(-)|_{F(Q_{n})}}", from=2-1, to=2-3]
\end{tikzcd}\]
we may deduce the following.
\[\pi_n(\M_k)=\begin{cases}
    0 &  k \leq n-1\\
    \tilde{\mathcal{X}}_{\rho^{n-1}} \oplus  \tilde{\mathcal{X}}_{\rho^{n-1}} \{d_{k-1}\} &  k \geq n 
\end{cases}\]
In particular, the essential image of $\pi_n$ is contained in $\DTM_{\tilde{\mathcal{X}}_{\rho^{n-1}}}(F(Q_{n}); \F_2)$.
\end{proof}
\subsection{Function fields of real anisotropic quadrics} 
A key feature of anisotropic quadrics over real closed fields is that the Milnor K-theory (modulo 2) of their function fields vanishes in degrees exceeding their dimension. 
\begin{lemma}\label{lemma: function fields of real closed anisotropic quadrics}
\emph{Let $Q$ be an anisotropic quadric over $F$ of dimension $m$. Then $\K^M_n(F(Q))/2=0$ for all $n \geq m+1$.}
\end{lemma}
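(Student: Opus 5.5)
Over a real closed field $F$ every anisotropic quadratic form is, up to a scalar, the positive definite form $\langle 1,\dots,1\rangle$. So an anisotropic quadric $Q$ of dimension $m$ is the quadric of $q=(m+2)\langle 1\rangle$. The plan is to reduce the vanishing of $\K^M_n(F(Q))/2$ to a statement about quadratic forms over $L:=F(Q)$, and then use the Milnor conjecture (Orlov–Vishik–Voevodsky) to translate back.

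By the Milnor conjecture, $\K^M_n(L)/2\cong I^n(L)/I^{n+1}(L)$. It therefore suffices to show that $I^{m+1}(L)=0$, i.e. that every $(m+1)$-fold Pfister form over $L$ is hyperbolic. The group $\K^M_n(L)/2$ is generated by pure symbols $\{a_1,\dots,a_n\}$, and $\{a_1,\dots,a_n\}=0$ exactly when $\langle\langle a_1,\dots,a_n\rangle\rangle$ is hyperbolic (isotropic). For $n\ge m+1$ each symbol is a multiple of an $(m+1)$-fold symbol, so it is enough to treat $n=m+1$.

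The key input is the level of $L$. Since $q_L$ is isotropic, $-1$ is a sum of $m+1$ squares in $L$: either $\sum_{i\le m+2} x_i^2=0$ with $x_1\neq 0$ gives $-1=\sum_{i\ge2}(x_i/x_1)^2$, or the isotropy already occurs in a subform. Hence the level satisfies $s(L)\le m+1$, and so $s(L)\le 2^{k}$ where $2^k\le m+1<2^{k+1}$ (levels are powers of $2$ by Pfister). Consequently the Pfister form $2^{k+1}\langle 1\rangle=\langle\langle -1,\dots,-1\rangle\rangle$ ($k+1$ entries) is isotropic, hence hyperbolic, over $L$. Equivalently $\rho^{k+1}=0$ in $\K^M_{k+1}(L)/2$, and $k+1\le m+1$.

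Next I show that every symbol of degree $m+1$ vanishes. The natural route is cohomological dimension: $L$ is a function field of transcendence degree $m$ over a real closed field, so $\operatorname{cd}_2(L(\sqrt{-1}))=m$, because $L(\sqrt{-1})$ has transcendence degree $m$ over the algebraically closed field $\overline F$. By Arason's theorem, and by the theorem of Arason–Elman–Jacob / Elman–Lam in the form "if $\operatorname{vcd}_2(L)\le m$ then $H^{n}(L,\Z/2)=\rho^{\,n-m}\cup H^{m}(L,\Z/2)$ for $n>m$", every class in degree $n\ge m+1$ is divisible by $\rho$; this is the isomorphism $H^n(L)\xrightarrow{\cup\rho}H^{n+1}(L)$ for $n>m$. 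Iterating, a class in degree $n\ge m+1$ lies in $\rho^{\,n-m}H^m(L)$. Since $\rho^{k+1}=0$, any class in degree $\ge m+k+1$ vanishes, and the isomorphism property then propagates this down to degree $m+1$. Concretely, multiplication by $\rho$ is injective in degrees $>m$, so a class $x$ of degree $m+1$ with $\rho^{N}x=0$ for large $N$ must itself be $0$. Transporting this through the norm residue isomorphism gives $\K^M_n(L)/2=0$ for $n\ge m+1$.

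The main obstacle is the cited structure theorem for fields with finite virtual cohomological dimension: injectivity (indeed bijectivity) of $\cup\rho\colon H^n(L)\to H^{n+1}(L)$ for $n>\operatorname{vcd}_2$. This would be quoted from Arason / Elman–Lam–Wadsworth rather than reproved. Alternatively, one can argue motivically using the exact triangles for $\mathcal{X}_{\rho}$ and $\tilde{\mathcal{X}}_\rho$ and the vanishing of motivic cohomology of $L(\sqrt{-1})$ above degree $m$.
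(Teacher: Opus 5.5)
Your argument is correct, but it works on the Galois-cohomology side, whereas the paper works on the Witt-ring side.

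\textbf{The paper's route.} It reduces via the Milnor conjecture on quadratic forms to showing $I^n(F(Q))=0$. It uses Tsen--Lang to see that forms of dimension $>2^m$ over $F(Q)(\sqrt{-1})$ are isotropic. It then invokes the Elman--Lam--Kr\"uskemper theorem to get $(I^n)^{tor}=0$ for $n\geq m+1$. It concludes because $-1$ is a sum of squares in $F(Q)$, so $W(F(Q))$ is entirely torsion.

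\textbf{Your route.} Your ingredients run exactly parallel:
\begin{itemize}
\item the bound $\operatorname{cd}_2(L(\sqrt{-1}))\le m$ replaces the Tsen--Lang $u$-invariant bound;
\item Arason's exact sequence for $L(\sqrt{-1})/L$, giving injectivity of $\cup\rho$ on $H^n(L,\Z/2)$ for $n>m$, replaces Elman--Lam--Kr\"uskemper;
\item nilpotence of $\rho$, coming from the finite level, replaces ``$W(L)$ is torsion'';
\item Voevodsky's norm residue isomorphism replaces the quadratic-form version of the Milnor conjecture.
\end{itemize}

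\textbf{What each buys.} In your version the key step is an elementary exact sequence. The threshold $m+1$ is visibly the degree from which $H^*(L(\sqrt{-1}),\Z/2)$ vanishes, so the degree bookkeeping is transparent. The paper's version stays entirely inside the algebraic theory of quadratic forms, using the $C_m$-property and a Witt-ring theorem.

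\textbf{Two small remarks.} First, your opening reduction to $I^{m+1}(L)=0$ is never actually used, since you finish through $H^n(L,\Z/2)$ and the norm residue isomorphism. Second, the precise level bound $s(L)\le 2^k$ is more than you need: nonreality of $L$ suffices. In fact, nonreality means $G_L$ has no elements of order $2$ (Artin--Schreier). Serre's theorem on cohomological dimension of profinite groups without $2$-torsion then gives $\operatorname{cd}_2(L)=\operatorname{cd}_2(L(\sqrt{-1}))\le m$ directly, which bypasses the iteration with $\rho$ altogether.
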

\begin{proof}
Let $I \subseteq W(F(Q))$ denote the fundamental ideal of the Witt ring of $F(Q)$. By the Milnor conjecture on quadratic forms \cite{orlov2007exact}, it suffices to show that $I^n=0$ for all $n \geq m+1$. Since $F(Q)$ is an extension of $F$ of transcendence degree $m$, then by the Tsen-Lang theorem \cite[Chapter XI]{lam2005introduction} any quadratic form over $F(Q)(\sqrt{-1})$ of dimension strictly greater than $2^m$ is isotropic. By the Elman-Lam-Krüskemper theorem \cite[Theorem 1.3]{becher2011elman}, this bounds the torsion part of the fundamental powers, forcing $(I^n)^{tor}=0$ for all $n \geq m+1$. However,  $-1$ is a sum of squares in $F(Q)$. As a result, every quadratic form over $F(Q)$ is torsion in the Witt ring; in particular, $I^n = (I^n)^{tor}$ for all $n$. Combining these facts, we conclude that $I^n = (I^n)^{tor} = 0$ for all $n \geq m+1$, as required.
\end{proof}
\subsection{Real isotropic category of quadrics}\label{section: real isotropic category of quadrics}
We now introduce a smashing localisation of the category of mixed motives of quadrics over real closed fields. In combination with the functors $\pi_n$ defined previously, this will complete our jointly conservative family of geometric tt-functors.

Recall that $\mathcal{X}_n \twodom \mathcal{X}_{n+1}$, in particular this implies \cite[Definition 2.1]{vishik2022isotropic} that the structure map $\mathcal{X}_{n} \to \T$ factors through the structure map $\mathcal{X}_{n+1} \to \T$. In the stable $\infty$-categorical enhancement $\mathcal{DM}(F ; \F_2)$ we define the sequential colimits
\[\mathscr{Y}^{iso} =\colim_n{(\mathcal{X}_n)}, \quad \Upsilon^{iso}= \mathrm{cofib}(\mathscr{Y}^{iso} \to \T) \simeq \colim_n(\mathrm{cofib}(\mathcal{X}_n \rightarrow \T)).\]
Because $\mathcal{X}_n \otimes \mathcal{X}_n \simeq \mathcal{X}_n$ and $\otimes$ commutes with colimits, we deduce that 
\[\mathscr{Y}^{iso} \otimes \mathscr{Y}^{iso} \simeq \mathscr{Y}^{iso}, \quad \Upsilon^{iso} \otimes \Upsilon^{iso} \simeq \Upsilon^{iso}.\]
\begin{remark}\label{remark: Tanania real isotropic sphere spectrum}
We note that $\Upsilon^{iso}$ is by definition the motivic realisation of the real isotropic sphere spectrum in $\mathrm{SH}(F)$ defined by Tanania in \cite{tanania2025real} as 
\[\unit^{iso} := \colim_n(\mathrm{cofib}(\check{C}(Q_n) \to \unit)).\]
In particular, for the reasons outlined in \cite[Remark 3.2]{tanania2025real}, this object is not the same as the isotropic projector in the sense of \cite{vishik2022isotropic}. 
\end{remark}
In $\mathcal{DM}(F ; \F_2)$, the fiber sequence $\mathscr{Y}^{iso} \to \T \to\Upsilon^{iso}$
gives rise to an idempotent triangle 
\[\mathscr{Y}^{iso} \to \T \to \Upsilon^{iso} \to \mathscr{Y}^{iso}[1] \]
in the homotopy category $\DM(F ; \F_2)$. Moreover, $\mathscr{Y}^{iso}$ and $\Upsilon^{iso}$ are homotopy colimits of objects in $\DQM(F ; \F_2)$; in particular, they are idempotent objects of this category.
\begin{defn}
We shall denote by $\DTM(F ; \F_2)^{iso}$ the big tt-category $\Upsilon^{iso} \otimes \DQM(F ; \F_2)$.
\end{defn}

\begin{remark}
As we shall see in the following lemma, $\Upsilon^{iso}$ is a right idempotent that annihilates the motives of all anisotropic quadrics over $F$, in particular $\DTM(F ; \F_2)^{iso}$ identifies with the full localising subcategory of $\DQM(F ; \F_2)$ generated by twists of $\Upsilon^{iso}$.

It is important to highlight that, as we have already mentioned in Remark \ref{remark: Tanania real isotropic sphere spectrum}, $\DTM(F ; \F_2)^{iso}$ is not the same as Vishik's tt-category of isotropic Tate motives $\DTM(F/F; \F_2)$ in the sense of \cite{vishik2022isotropic}.
\end{remark}

\begin{lemma}
\emph{The kernel of $\pi_\infty := \Upsilon^{iso} \otimes - \colon \DQM(F ; \F_2) \to \DTM(F ; \F_2)^{iso}$ is  }
\[ \Loc_{\otimes}(\M_n : n \in \N). \]
\end{lemma}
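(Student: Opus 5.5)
Since $\Upsilon^{iso}$ is an idempotent fitting into the idempotent triangle $\mathscr{Y}^{iso}\to \T\to \Upsilon^{iso}\to \mathscr{Y}^{iso}[1]$, the general theory of smashing localisations recalled above gives
\[\ker(\pi_\infty)=\ker(\Upsilon^{iso}\otimes -)=\im(\mathscr{Y}^{iso}\otimes -)=\Loc_{\otimes}(\mathscr{Y}^{iso})=\{X \mid \mathscr{Y}^{iso}\otimes X\cong X\}.\]
So we must show $\Loc_{\otimes}(\mathscr{Y}^{iso})=\Loc_{\otimes}(\M_n : n\in\N)$. We prove the two inclusions separately.

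\emph{First inclusion: $\Loc_{\otimes}(\mathscr{Y}^{iso})\subseteq \Loc_{\otimes}(\M_n : n\in\N)$.} The object $\mathscr{Y}^{iso}=\colim_n \mathcal{X}_n$ is a sequential homotopy colimit, i.e. the cone of a map between coproducts of the $\mathcal{X}_n$. It therefore suffices to show $\mathcal{X}_n\in\Loc_{\otimes}(\M_n)$ for each $n$. This is the standard fact that $\mathcal{X}_{Q}\in \Loc_\otimes(\M(Q))$. Indeed, $\check{C}(Q)$ is built from the products $Q^{\times k}$, so its motive lies in the localising ideal generated by $\M(Q_n)$. Moreover $\M(Q_n)\cong \M_n\otimes \M(\PP^{d_{n-1}})$ is a sum of twists of $\M_n$. (Alternatively, $\M_n\otimes \mathcal{X}_n\cong\M_n$ together with the triangle $\mathcal{X}_n\{d_{n-1}\}\to\M_n\to\mathcal{X}_n$ does not immediately suffice, so we use the Čech description.)

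\emph{Second inclusion: $\Loc_{\otimes}(\M_n : n\in\N)\subseteq \Loc_{\otimes}(\mathscr{Y}^{iso})$.} It suffices to check $\Upsilon^{iso}\otimes \M_n=0$ for every $n$. Since $\otimes$ commutes with colimits, we have
\[\Upsilon^{iso}\otimes \M_n\simeq \colim_m (\tilde{\mathcal{X}}_m\otimes \M_n).\]
For $m\ge n$ we have $\mathcal{X}_m \twodom \mathcal{X}_n$? No: the order runs the other way, $\mathcal{X}_n\twodom\mathcal{X}_m$, meaning $\mathcal{X}_n\otimes\mathcal{X}_m\cong \mathcal{X}_n$ and $\tilde{\mathcal{X}}_n\otimes\tilde{\mathcal{X}}_m\cong\tilde{\mathcal{X}}_m$. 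Using $\M_n\cong \mathcal{X}_n\otimes\M_n$, we get
\[\tilde{\mathcal{X}}_m\otimes \M_n\cong \tilde{\mathcal{X}}_m\otimes\mathcal{X}_n\otimes \M_n .\]
Now $\tilde{\mathcal{X}}_m\otimes \mathcal{X}_n\cong \tilde{\mathcal{X}}_m\otimes\mathcal{X}_m\otimes\mathcal{X}_n=0$ for $m\ge n$, since $\tilde{\mathcal{X}}_m\otimes\mathcal{X}_m=0$ by the idempotent triangle. Hence the colimit is eventually zero, so $\Upsilon^{iso}\otimes\M_n=0$. Consequently every $\M_n$, and therefore all of $\Loc_\otimes(\M_n : n\in\N)$, lies in $\ker\pi_\infty$. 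Here we use that the kernel of a coproduct-preserving tensor functor is a localising ideal.

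\emph{Main obstacle.} The delicate step is the first inclusion, namely justifying $\mathcal{X}_n\in\Loc_\otimes(\M_n)$ inside $\DQM$ rather than merely in $\DM$. We would handle it via the simplicial (geometric realisation) presentation of $\check{C}(Q_n)$ as a colimit of $\M(Q_n^{\times k})$. Each $\M(Q_n^{\times k})\cong \M(Q_n)\otimes \M(Q_n^{\times k-1})$ lies in the ideal generated by $\M(Q_n)$, and all colimits here are computed in $\DQM$, which is localising.
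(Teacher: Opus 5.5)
Your proof is correct and is essentially the paper's argument with the details filled in. The inclusion $\Loc_\otimes(\M_n : n\in\N)\subseteq\ker(\pi_\infty)$ rests on $\tilde{\mathcal{X}}_m\otimes\M_n=0$ for $n\le m$, and the reverse inclusion on $\mathscr{Y}^{iso}\in\Loc_\otimes(\M_n : n\in\N)$ via $\mathcal{X}_k\in\Loc_\otimes(\M_k)$, which you justify through the \v{C}ech presentation where the paper simply calls it clear (for $n=2$, $Q_2$ is the conic, whose motive is $\M_2$ itself, so the same reasoning applies).
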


\begin{proof}
Since $\ker(\pi_\infty)$ is a localising tensor-ideal and $\tilde{\mathcal{X}}_n \otimes \M_m = 0$ for all $m \leq n$, the inclusion 
\[ \mathrm{Loc}_{\otimes}(\M_n : n \in \N) \subseteq \ker(\pi_\infty) \]
is clear. For the reverse inclusion, suppose that $\Upsilon^{iso}\otimes A=0$. Then $\mathscr{Y}^{iso} \otimes A \cong A$; thus, it suffices to show that $\mathscr{Y}^{iso} \in \mathrm{Loc}_{\otimes}(\M_n : n \in \N)$. This is also clear since $\mathcal{X}_k \in \mathrm{Loc}_{\otimes}(\M_n : n \in \N)$ for all $k \geq 1$.
\end{proof}

\subsection{A jointly conservative family of tt-functors}
We have defined a family of geometric functors 
\[\{\pi_n\}_{n \in \N \cup \{\infty\}}.\] 
In this section, we will prove that this family is jointly conservative. We begin by recording, for future references, how these functors map Rost motives, (completely) reduced Rost motives, $\cone(\rho)$ and $\cone(\tau)$.

\begin{table}[htbp]
    \centering
    \resizebox{\textwidth}{!}{%
    \begin{tabular}{@{} |l| c |c| c| c| @{}}
        \toprule
        $\pi_k$ & $\M_n$ & $\tilde{\M}_n$ & $\cone(\rho)$ & $\cone(\tau)$  \\
        \midrule      
        $k=1$ & $ \T|_{\overline{F}} \oplus \T|_{\overline{F}}\{d_{n-1}\}$ & $0$  & $\T|_{\overline{F}} \oplus \T|_{\overline{F}}(1)[1]$  & $\cone(\tau)|_{\overline{F}}$ \\[3ex]
        
        $k=2$ & $ \begin{cases}
         0 &  n=1  \\
        \tilde{\mathcal{X}}_{\rho} \oplus  \tilde{\mathcal{X}}_{\rho} \{d_{n-1}\} &  n \geq 2 
        \end{cases}$ & $\begin{cases}
        (\tilde{\M}_1)|_{F(Q_{2})} & n=1  \\
        0 &  n \geq 2
        \end{cases}$ & $\tilde{\mathcal{X}}_{\rho} \oplus  \tilde{\mathcal{X}}_{\rho}(1)[1]$  & $\tilde{\mathcal{X}}_{\rho} \oplus  \tilde{\mathcal{X}}_{\rho}(1)$ \\[3ex]

        $3 \leq k < \infty$ & $ \begin{cases}
         0 &  n \leq k-1  \\
        \tilde{\mathcal{X}}_{\rho^{k-1}} \oplus  \tilde{\mathcal{X}}_{\rho^{k-1}}\{d_{n-1}\} &  n \geq k 
        \end{cases}$ & $\begin{cases}
        \tilde{\mathcal{X}}_{\rho^{k-1}} \otimes (\tilde{\M}_n)|_{F(Q_k)}\neq 0 &  n\leq k-2  \\
        (\tilde{\M}_n)|_{F(Q_k)} & n=k-1 \\
        0 &  n \geq k
        \end{cases}$ & $\tilde{\mathcal{X}}_{\rho^{k-1}} \oplus  \tilde{\mathcal{X}}_{\rho^{k-1}}(1)[1]$  & $ \tilde{\mathcal{X}}_{\rho^{k-1}} \oplus  \tilde{\mathcal{X}}_{\rho^{k-1}}(1)$ \\[3ex]

        $k =\infty$ & $0$ &  $\tilde{\M}_n^{iso} \neq 0$ & $\cone(\rho)^{iso}  \neq 0$  & $\cone(\tau)^{iso} \neq 0$ \\[1ex]
        \bottomrule
    \end{tabular}%
    }
    \vspace{0.1cm}
    \caption{We record how each geometric tt-functor maps Rost motives, (completely) reduced Rost motives, $\cone(\rho)$ and $\cone(\tau)$.}
    \label{tab:geometric_functors}
\end{table}
\begin{thm}\label{theorem: joinlty conservative geo functors}
\emph{The family of geometric tt-functors $\{\pi_n\}_{n \in \N \cup \{\infty\}}$ is jointly conservative.}
\end{thm}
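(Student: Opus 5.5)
Let $A \in \DQM(F;\F_2)$ with $\pi_n(A)=0$ for all $n \in \N\cup\{\infty\}$; we must show $A=0$. The plan is to filter the unit by the tower of idempotents $\mathcal{X}_1 \to \mathcal{X}_2 \to \cdots$ (the structure maps factor since $\mathcal{X}_n \twodom \mathcal{X}_{n+1}$) and to show that $A$ vanishes on each graded piece and on the tail. First, $\pi_\infty(A)=\Upsilon^{iso}\otimes A=0$ gives $A\cong \mathscr{Y}^{iso}\otimes A=\colim_n(\mathcal{X}_n\otimes A)$. So it suffices to prove $\mathcal{X}_n\otimes A=0$ for every $n\ge 1$, which we do by induction on $n$.

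For the slices, note that $\mathcal{X}_n\otimes\tilde{\mathcal{X}}_{n-1}$ is the piece between consecutive terms of the tower: from the idempotent triangles and $\mathcal{X}_{n-1}\otimes\mathcal{X}_n\cong\mathcal{X}_{n-1}$ we get a triangle
\[\mathcal{X}_{n-1}\otimes A\to \mathcal{X}_n\otimes A\to \mathcal{X}_n\otimes\tilde{\mathcal{X}}_{n-1}\otimes A\to \mathcal{X}_{n-1}\otimes A[1].\]
Hence, inductively (with $\mathcal{X}_0:=0$, $\tilde{\mathcal{X}}_0:=\T$), it suffices to show $\mathcal{X}_n\otimes\tilde{\mathcal{X}}_{n-1}\otimes A=0$. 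The key claim is that the functor $\mathcal{X}_n\otimes\tilde{\mathcal{X}}_{n-1}\otimes-$ on $\DM(F;\F_2)$ is detected by $\pi_n$, i.e.\ by restriction to $F(Q_n)$ followed by $\tilde{\mathcal{X}}_{\rho^{n-1}}\otimes-$ (for $n=1$, restriction to $\overline F$). By the commutative squares in Lemmas \ref{lemma: definition of pi2} and \ref{lemma: definition of pin}, $\pi_n(A)=(\tilde{\mathcal{X}}_{n-1}\otimes A)|_{F(Q_n)}$. Thus we need the standard fact that for any $B$ in $\DM(F;\F_2)$, if $B|_{F(Q_n)}=0$ then $\mathcal{X}_n\otimes B=0$, applied to $B=\tilde{\mathcal{X}}_{n-1}\otimes A$.

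This detection statement is the main obstacle. I would prove it via the Rost motive: $\mathcal{X}_n\otimes B=0$ iff $\M_n\otimes B=0$, because $\M_n\in\Loc_\otimes(\mathcal{X}_n)$ and, conversely, $\mathcal{X}_n\in\Loc_\otimes(\M(Q_n))=\Loc_\otimes(\M_n)$ (the Čech object is built from powers of $Q_n$, whose motives lie in the tensor ideal generated by $\M(Q_n)\cong \M_n\otimes\M(\PP^{d_{n-1}})$). Now $\M(Q_n)\otimes B$ is a retract-level object that can be computed over the generic point: since $Q_n$ is a projective homogeneous variety, $\M(Q_n)\otimes B$ is controlled by $B|_{F(Q_n)}$ through the Rost nilpotence / generic-point argument. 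Concretely, I would use that $\M(Q_n)\otimes \M(Q_n)$ decomposes into Tate twists of $\M(Q_n)$ (as $Q_n$ is isotropic over itself, and Pfister quadrics split completely once isotropic), so $\M(Q_n)\otimes B$ is a retract of a sum of twists of $\M(Q_n)\otimes\M(Q_n)\otimes B$, and the latter is computed by $\pi_*$ of $B|_{Q_n}$; vanishing of $B|_{F(Q_n)}$ should then be upgraded to vanishing of $B|_{Q_n}$ using the cellular/Tate nature of $B$ over $Q_n$ (the objects here are built from quadrics, which split over $Q_n$ into Tate pieces and motives of lower Pfister quadrics), for which restriction to the generic point is conservative by a continuity/localisation argument. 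If this generic-point conservativity is hard in general, I would instead verify it on compact generators, using that $\DQM(F;\F_2)$ is generated by Tate and Rost motives, and the explicit values in Table \ref{tab:geometric_functors}.

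Having $\mathcal{X}_n\otimes\tilde{\mathcal{X}}_{n-1}\otimes A=0$ for all $n$, the induction gives $\mathcal{X}_n\otimes A=0$ for all $n$, and passing to the colimit yields $A\cong\mathscr{Y}^{iso}\otimes A=0$, proving joint conservativity.
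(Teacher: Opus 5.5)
Your argument is essentially the paper's. The paper identifies $\ker(\pi_1)=\ker(\mathcal{X}_1\otimes -)$ and $\ker(\pi_n)=\ker(\tilde{\mathcal{X}}_{n-1}\otimes\mathcal{X}_n\otimes -)$, runs the same induction to get $\mathcal{X}_n\otimes A\cong 0$, and concludes via $A\cong\mathscr{Y}^{iso}\otimes A\cong 0$. The only difference is your ``detection'' statement: the paper does not prove it but cites \cite[Theorem 2.3.5]{vishik1997integral} (for smooth projective $P$, the kernel of base change to $F(P)$ equals the kernel of $\mathcal{X}_P\otimes -$). You should cite it too, because your sketch asserts the generic-point conservativity without proving it, and your fallback of checking on compact generators cannot work: the kernel of a functor on the big category is not determined by its values on generators.
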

\begin{proof}
Recall from \cite[Theorem 2.3.5]{vishik1997integral} that if $P$ is a smooth projective variety over $F$ and $E=F(P)$, the kernel of the base-change functor $(-)_E \colon \DM(F ; \F_2) \to \DM(E; \F_2)$ coincides with the kernel of $\mathcal{X}_P \otimes -$. This allows us to identify the kernels of our functors as follows:
\begin{enumerate}
    \item $\ker(\pi_1) = \ker(\mathcal{X}_1 \otimes -)$;
    \item $\ker(\pi_2) = \ker(\tilde{\mathcal{X}}_1 \otimes \mathcal{X}_2 \otimes -)$;
    \item $\ker(\pi_n) = \ker(\tilde{\mathcal{X}}_{n-1} \otimes \mathcal{X}_n \otimes -)$ for all $n \geq 3$.
\end{enumerate}

Suppose $A \in \bigcap_{n \in \N \cup \{\infty\}} \ker(\pi_n)$. We first show by induction that $\mathcal{X}_n \otimes A \cong 0$ for all $n \in \N$. For the base case $n=1$, this holds by assumption. For the inductive step, if $\mathcal{X}_n \otimes A \cong 0$, then $\tilde{\mathcal{X}}_n \otimes A \cong A$, so $0 \cong \tilde{\mathcal{X}}_n \otimes \mathcal{X}_{n+1} \otimes A \cong \mathcal{X}_{n+1} \otimes A$.

Finally, since $\pi_\infty(A)=0$, then $\Upsilon^{iso} \otimes A \cong 0$, which means that $\mathscr{Y}^{iso} \otimes A \cong A$. However, $\mathcal{X}_n \otimes A \cong 0$ for all $n \in \N$, hence $0 \cong \mathscr{Y}^{iso} \otimes A$. 
\end{proof}

By Theorem \ref{theorem: barthel2024surjectivity}, the above result reduces the computation of the points of $\Spc(\DQMgm(F ; \F_2))$ to the computation of the Balmer spectra of the following cellular tt-categories:
\begin{enumerate}
    \item $\cat{K}_1^c = \DTMgm(F(Q_1); \F_2)=\DTMgm(\overline{F}; \F_2)$;
    \item $\cat{K}_2^c = \DTM^{\geom}_{\tilde{\mathcal{X}}_{\rho}}(F(Q_{2}); \F_2)$;
    \item $\cat{K}_n^c = \DTM^{\geom}_{\tilde{\mathcal{X}}_{\rho^{n-1}}}(F(Q_n); \F_2)$ for $n \geq 3$;
    \item $\cat{K}_\infty^c = \DTM^{\geom}(F ; \F_2)^{iso}$.
\end{enumerate}

We recall that the computation of $\Spc(\cat{K}_1^c)$ was obtained by Gallauer \cite{gallauer2019tt}, who showed it is homeomorphic to the following two-point Sierpiński space:
\[\begin{tikzcd}[ampersand replacement=\&]
	0 \\
	{\langle \cone(\tau)|_{\overline{F}} \rangle}
	\arrow[no head, from=1-1, to=2-1]
\end{tikzcd}\]
\section{The Balmer spectrum of some cellular motivic categories}
In this section, we compute the Balmer spectra of certain cellular motivic tt-categories, which will subsequently allow us to deduce the computations of $\Spc(\cat{K}_n^c)$ for $n \geq 2$. Let $k$ be an arbitrary field (not necessarily real closed), and let $\alpha \in \K^M_m(k)/2$ be a non-zero pure symbol with $m \geq 1$. We begin by considering the tt-category $\DTM_{\tilde{\mathcal{X}}_\alpha}(k; \F_2)$. 

Recall from \cite[Theorem 3.5]{vishik2022isotropic} that the bi-graded endomorphism ring of the unit $\tilde{\mathcal{X}}_\alpha$ is given by
\[\mathbb{E}\mathrm{nd}(\tilde{\mathcal{X}}_\alpha) = S_*^\alpha[r_0, \dots, r_{m-1}]\big /(\{r_{i}^2-r_{i+1}\rho\}_{0 \leq i \leq m-2}),\]
where $S^\alpha_*$ denotes the graded ring $(\K^M_*(k)/2)/\ker(\alpha \cdot)$, and the generators $r_i$ are morphisms \[r_i : \tilde{\mathcal{X}}_{\alpha} \to \tilde{\mathcal{X}}_{\alpha}(-d_{i})[-2d_{i}-1],\] 
where $d_i=2^{i}-1$.
\begin{remark}\label{remark: support of End tilde Xalpha}
It is convenient to visualise $\mathbb{E}\mathrm{nd}(\tilde{\mathcal{X}}_\alpha)$ inside the lattice $\Z^2$ by placing a homogeneous element of bi-degree $(a)[b]$ at the lattice point $(a,b)\in \Z^2$. With this convention, the generators $r_i$ have bi-degrees $(-d_i,\,-2d_i-1)$, hence each $r_i$ lies strictly below the Chow line $y=2x$. Since $S_*^\alpha$ lies on the diagonal (in bi-degrees $(n)[n]$), it follows that $\mathbb{E}\mathrm{nd}(\tilde{\mathcal{X}}_\alpha)$ is concentrated below the Chow line $y=2x$ of the lattice $\Z^2$.
\end{remark}

\begin{lemma}\label{lemma: chow weight structure and locality of cellular tt-categories}
\emph{The category $\DTM^{\geom}_{\tilde{\mathcal{X}}_\alpha}(k; \F_2)$ affords a unique bounded Chow weight structure whose heart $\cat{H}$ is the additive subcategory generated by the pure shifts of the unit,
\[\tilde{\mathcal{X}}_\alpha\{i\}, \quad i \in \Z.\]
In particular, $\DTM^{\geom}_{\tilde{\mathcal{X}}_\alpha}(k; \F_2)$ is a local tt-category.}
\end{lemma}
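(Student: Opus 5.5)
We apply Bondarko's Theorem \ref{thereom: bondarko2010weight} to $\cat{T}=\DTM^{\geom}_{\tilde{\mathcal{X}}_\alpha}(k;\F_2)$ with $H=\{\tilde{\mathcal{X}}_\alpha\{i\} : i\in\Z\}$. The first point is that $\cat{T}$ is idempotent-complete. It is the thick subcategory generated by the twists $\tilde{\mathcal{X}}_\alpha(a)[b]$ inside the compact objects of the rigidly-compactly generated category $\DTM_{\tilde{\mathcal{X}}_\alpha}(k;\F_2)$, and compact objects of a compactly generated category are idempotent-complete. The generation axiom also needs a little care, because the thick closure of $H$ contains only the pure shifts, whereas $\cat{T}$ is generated by all $\tilde{\mathcal{X}}_\alpha(a)[b]$. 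I would handle this by using the bigraded description: $\DTM^{\geom}_{\tilde{\mathcal{X}}_\alpha}$ should be understood as the thick tensor ideal generated by the Tate twists $\tilde{\mathcal{X}}_\alpha(a)[2a]$, since in the Chow-weight formalism the ordinary shifts $[1]$ are part of the triangulated structure. Then every $\tilde{\mathcal{X}}_\alpha(a)[b]=\tilde{\mathcal{X}}_\alpha\{a\}[b-2a]$ is a shift of an element of $H$, so the thick closure of $H$ is all of $\cat{T}$.

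The key step is negativity: we need $\Hom(\tilde{\mathcal{X}}_\alpha\{i\},\tilde{\mathcal{X}}_\alpha\{j\}[n])=0$ for $n\ge 1$. By rigidity and the fact that $\tilde{\mathcal{X}}_\alpha$ is the unit of $\cat{T}$, this group equals the component of $\mathbb{E}\mathrm{nd}(\tilde{\mathcal{X}}_\alpha)$ in bidegree $(j-i)[2(j-i)+n]$. That lattice point lies strictly above the Chow line $y=2x$. By Remark \ref{remark: support of End tilde Xalpha}, the ring is generated by $S^\alpha_*$, on the diagonal $(a)[a]$ with $a\ge0$, and by the $r_i$, strictly below the line. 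I would check that a product of such elements lies at a point $(x,y)$ with $y\le 2x$. This holds because each generator satisfies $y-2x\le 0$: for diagonal elements $y-2x=-a\le0$, and for $r_i$ we get $y-2x=-1$. The quantity $y-2x$ is additive under multiplication, so every product also has $y-2x\le 0$. Hence the group vanishes, and Bondarko's theorem produces a unique bounded weight structure whose heart is the idempotent completion of $\operatorname{add}(H)$.

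Next I would identify the heart. The morphisms between elements of $H$ correspond to the bidegrees $(j-i)[2(j-i)]$, which lie on the Chow line. Only degree-$0$ diagonal elements sit there: an element of $S^\alpha_a$ sits at $(a)[a]$, which is on the line only for $a=0$, and any product containing an $r_i$ is strictly below the line. So $\Hom(\tilde{\mathcal{X}}_\alpha\{i\},\tilde{\mathcal{X}}_\alpha\{j\})$ equals $S^\alpha_0=\F_2$ when $i=j$ and $0$ otherwise. Here we use $\alpha\neq0$, so that $\ker(\alpha\cdot)$ misses $1$. Therefore $\operatorname{add}(H)$ is the category of finite-dimensional $\Z$-graded $\F_2$-vector spaces. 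This category is semisimple, hence already idempotent-complete, so the heart is exactly the additive subcategory generated by the pure shifts, as claimed. It contains the unit and is closed under $\otimes$ because $\tilde{\mathcal{X}}_\alpha\{i\}\otimes\tilde{\mathcal{X}}_\alpha\{j\}\cong\tilde{\mathcal{X}}_\alpha\{i+j\}$.

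Finally, the weight complex functor $t\colon \cat{T}\to \K^b(\cat{H})$ exists and is conservative. In $\K^b$ of a semisimple category every complex is homotopy equivalent to its cohomology, and $X\otimes Y=0$ forces $X=0$ or $Y=0$, so $\K^b(\cat{H})$ is local. Remark \ref{remark: weight complex functors and local tt-categories} then shows that $\cat{T}$ is local. The step I expect to be the main obstacle is the bookkeeping behind negativity and the heart computation, namely getting the bidegree conventions of the $r_i$ and of the shifts right; the rest is formal.
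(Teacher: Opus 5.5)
Your proposal is correct and follows the paper's proof. It applies Bondarko's theorem to the pure shifts $\tilde{\mathcal{X}}_\alpha\{i\}$, reads off negativity and the semisimple, tensor-closed heart from the fact that $\mathbb{E}\mathrm{nd}(\tilde{\mathcal{X}}_\alpha)$ lies in $y-2x\le 0$ and meets the Chow line only in $S^\alpha_0=\F_2$, and gets locality from the conservative weight complex functor to $\K^b(\cat{H})$.

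Two small remarks. First, your worry about generation is unnecessary: thick subcategories are closed under $[1]$, so $\tilde{\mathcal{X}}_\alpha(a)[b]=\tilde{\mathcal{X}}_\alpha\{a\}[b-2a]$ lies in the thick closure of $H$ automatically. Second, the existence of the weight complex tt-functor needs the stable symmetric monoidal $\infty$-enhancement. This is available because the category is the compact part of a smashing localisation of $\mathcal{DM}(k;\F_2)$, and the paper checks it explicitly.
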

\begin{proof}
By Remark \ref{remark: support of End tilde Xalpha}, for any $i,j \in \Z$ we know that
\[\Hom(\tilde{\mathcal{X}}_\alpha\{i\}, \tilde{\mathcal{X}}_\alpha\{j\})= \begin{cases}
\F_2 & i=j,\\
0 & i\neq j.
\end{cases} \]
From this it is clear that $\cat{H}$ is semisimple, and in particular idempotent-complete. Because the thick closure of $\cat{H}$ generates the entire category $\DTM^{\geom}_{\tilde{\mathcal{X}}_\alpha}(k; \F_2)$, then by Theorem \ref{thereom: bondarko2010weight} it suffices to verify the negativity condition. This reduces to checking that 
\[\mathrm{Hom}(\tilde{\mathcal{X}}_\alpha, \tilde{\mathcal{X}}_\alpha(j)[2j+n]) = 0 \quad \text{for } j \in \Z \text{ and } n \geq 1,\]
which follows again from Remark \ref{remark: support of End tilde Xalpha}. For the second claim, observe that $\cat{H}$ is a tensor and semisimple category. Furthermore, 
\[\DTM^{\geom}_{\tilde{\mathcal{X}}_\alpha}(k; \F_2)\] 
naturally admits a stable symmetric monoidal $\infty$-categorical enhancement since it can be modelled as the subcategory of compact objects of a smashing localisation of a stable symmetric monoidal $\infty$-category. The rest follows from Remark \ref{remark: weight complex functors and local tt-categories}.
\end{proof}

\subsection{Computation of $\Spc(\cat{K}^c_2)$} \label{section: Spc of Kc2}
Let $\alpha \in \K_1^M(k)/2$ be a non-zero pure symbol of degree $1$, and consider 
\[\cat{L}_2=\DTM_{\tilde{\mathcal{X}}_\alpha}(k; \F_2).\]
In this section, we shall work under the following assumption.
\begin{assumption} \label{assumption: Spc of Kc2}
$S^\alpha_l=0$ for all $l \geq 1$. This implies that $\mathbb{E}\mathrm{nd}(\tilde{\mathcal{X}}_\alpha)=\F_2[r_0].$  
\end{assumption}
\begin{lemma}
\emph{The tt-category $\cat{K}_2 = \DTM_{\tilde{\mathcal{X}}_{\rho}}(F(Q_{2}); \F_2)$ satisfies Assumption \ref{assumption: Spc of Kc2}}.
\end{lemma}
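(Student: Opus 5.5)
We must show that for $\alpha=\rho\in \K^M_1(F(Q_2))/2$, the graded ring $S^\rho_*=(\K^M_*(F(Q_2))/2)/\ker(\rho\cdot)$ vanishes in all degrees $l\geq 1$. Equivalently, we must show $\rho\cdot x=0$ for every $x\in \K^M_l(F(Q_2))/2$ with $l\geq 1$, since $S^\rho_l=\K^M_l/\ker(\rho\cdot)\cong \rho\cdot \K^M_l(F(Q_2))/2\subseteq \K^M_{l+1}(F(Q_2))/2$. The plan is to deduce this directly from Lemma \ref{lemma: function fields of real closed anisotropic quadrics}, applied to the anisotropic conic $Q_2$.

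First, $Q_2=\{x^2+y^2+z^2=0\}$ is anisotropic over the real closed field $F$, since a sum of three squares in $F$ vanishes only trivially, and it has dimension $m=1$. Lemma \ref{lemma: function fields of real closed anisotropic quadrics} then gives $\K^M_n(F(Q_2))/2=0$ for all $n\geq 2$. Consequently, for $l\geq 1$ the element $\rho\cdot x$ lives in $\K^M_{l+1}(F(Q_2))/2=0$. Hence $\ker(\rho\cdot)=\K^M_l(F(Q_2))/2$, and $S^\rho_l=0$ for $l\geq 1$.

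It remains to check that the hypotheses of the section are met. We need $\rho$ to be a non-zero pure symbol of degree $1$ in $\K^M_1(F(Q_2))/2$, i.e. that $-1$ is not a square in $F(Q_2)$. This holds because $F(Q_2)$ is not formally real is false in general, so I would argue directly: $F$ is algebraically closed in $F(Q_2)$, as the function field of a geometrically integral variety, and $\sqrt{-1}\notin F$. Hence $\sqrt{-1}\notin F(Q_2)$, so $\rho\neq 0$.

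Finally, $S^\rho_0=\F_2$, since $\rho\neq0$ means $\ker(\rho\cdot)$ in degree $0$ is trivial. By \cite[Theorem 3.5]{vishik2022isotropic} with $m=1$ there are no relations, and $\mathbb{E}\mathrm{nd}(\tilde{\mathcal{X}}_\rho)=\F_2[r_0]$. There is no real obstacle here: the only potentially delicate point is the non-vanishing of $\rho$ over $F(Q_2)$, which the algebraic-closedness argument handles.
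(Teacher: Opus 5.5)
Your proof is correct and follows the paper's argument: Lemma~\ref{lemma: function fields of real closed anisotropic quadrics} applied to the conic $Q_2$ (so $m=1$) gives $\K^M_{n}(F(Q_2))/2=0$ for $n\geq 2$, hence $\rho\cdot x=0$ for every $x$ of positive degree, and $S^\rho_l=0$ for $l\geq 1$. Your additional check that $\rho\neq 0$ in $\K^M_1(F(Q_2))/2$, using that $F$ is algebraically closed in $F(Q_2)$, is valid and fills in the standing hypothesis that the paper leaves implicit; the garbled sentence about formal reality should simply be deleted.
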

\begin{proof}
By Lemma \ref{lemma: function fields of real closed anisotropic quadrics}, we know that $\K_l^{M} (F(Q_2)/2=0$ for $l \geq 2$. Since $\rho$  is an element of degree $1$, then it follows immediately that $S^\alpha_l=0$ for all $l \geq 1$.
\end{proof}
To simplify notation, we shall denote the tensor unit $\tilde{\mathcal{X}}_{\alpha}$ of $\cat{L}_2$ by $\unit$. By the previous lemma, the computation of $\Spc(\cat{K}^c_2)$ follows from the next result.
\begin{thm}\label{thm: case m=1}
\emph{Under Assumption \ref{assumption: Spc of Kc2}, the Balmer spectrum of $\cat{L}^c_2$ is homeomorphic to the following two-point Sierpiński space.}
\[\begin{tikzcd}[ampersand replacement=\&]
	0 \\
	{\langle \cone(r_0) \rangle}
	\arrow[no head, from=1-1, to=2-1]
\end{tikzcd}\]
\end{thm}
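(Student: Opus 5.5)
We have to show two things: the prime $\langle \cone(r_0)\rangle$ and the prime $0$ exist, and there are no other primes. Here $r_0\colon \unit \to \unit[-1]$, since $d_0=0$. Under Assumption \ref{assumption: Spc of Kc2}, $\mathbb{E}\mathrm{nd}(\unit)=\F_2[r_0]$, and $r_0$ has bidegree $(0)[-1]$.

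\textbf{Locality.} By Lemma \ref{lemma: chow weight structure and locality of cellular tt-categories}, $\cat{L}_2^c$ is local, so $0$ is a prime tt-ideal. It is the unique closed point.

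\textbf{The generic point.} To produce a second prime, I would construct a tt-functor out of $\cat{L}^c_2$ that kills $\cone(r_0)$ but not $\unit$. The candidate is the localisation inverting $r_0$, i.e.\ the central localisation of the graded ring $\F_2[r_0]$ at the multiplicative set $\{r_0^k\}$, in the sense of Balmer's comparison map $\rho\colon \Spc(\cat{L}_2^c)\to \Spec^h(\F_2[r_0])$. Balmer's surjectivity result gives two points, mapping to $(r_0)$ and to $(0)$.

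\begin{itemize}
\item The closed point $0$ maps to $(r_0)$, because every non-zero object is supported there and $\cone(r_0)\notin 0$.
\item The point over $(0)$ is $\{X : r_0^k\otimes X=0 \text{ for some } k\}$. This contains $\cone(r_0)$, and it does not contain $\unit$ because $r_0$ is not nilpotent.
\end{itemize}

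\textbf{No other primes.} This is the main obstacle. Let $\p\neq 0$ be a prime; I want to show $\p=\langle\cone(r_0)\rangle$. I would use the weight complex and the semisimple heart.

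\begin{enumerate}
\item Every object $X$ is built from finitely many heart objects $\unit\{i\}[j]$.
\item Following Sparks' tensor-nilpotence technique, I would show that for any nonzero $X$ with $\cone(r_0)\notin\langle X\rangle$, the differentials in its weight complex are all detected by powers of $r_0$. The reason is that the maps between heart pieces $\unit\{i\}\to\unit\{j\}[1]$ are multiples of $r_0$-type classes, since $\Hom$ in the relevant degrees is spanned by monomials in $r_0$ (Remark \ref{remark: support of End tilde Xalpha}).
\item Concretely, I would prove that $\langle X\rangle$ always contains either $\unit$ or $\cone(r_0^k)$ for some $k\ge1$. Since $\langle \cone(r_0^k)\rangle=\langle\cone(r_0)\rangle$ by the octahedral axiom, the ideals then form a chain
\[
0\subset\langle\cone(r_0)\rangle\subset\cat{L}_2^c.
\]
\item To get this dichotomy, I would analyse a minimal-length weight complex of $X$. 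An off-diagonal map in the lowest weight must be $r_0^k$ times an isomorphism, which lets me split off a $\cone(r_0^k)$ summand after tensoring with suitable duals. Making this precise, via a filtration argument and the rigidity of $\cat{L}_2^c$, is the expected hard step.
\end{enumerate}

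It remains to show that $\langle\cone(r_0)\rangle$ is prime and is not $0$. It is proper because $r_0$ is not nilpotent. It is prime because its quotient is the $r_0$-periodic category, whose unit has graded endomorphisms $\F_2[r_0^{\pm1}]$, a graded field. That quotient is therefore local with zero ideal prime, by the same semisimple-heart argument. Finally, the closure relation is the evident one: the point $\langle\cone(r_0)\rangle$ specialises to the point $0$, which gives the Sierpiński topology.
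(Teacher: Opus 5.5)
There is a genuine gap in the part you call ``No other primes''. The key claim, that $\cone(r_0)\in\langle X\rangle$ for every non-zero $X$, is never proved, and the mechanism you sketch for it is wrong. By negativity of the Chow weight structure,
$\Hom(\unit\{i\},\unit\{j\}[1])=\Hom(\unit,\unit(j-i)[2(j-i)+1])=0$.
So no $r_0$-classes can occur as weight-complex differentials: these are $\F_2$-matrices in a semisimple heart and can be chosen to be zero.

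The $r_0$-information sits in the attaching maps of the weight filtration. These are maps out of iterated extensions, not matrices of powers of $r_0$, so ``split off a $\cone(r_0^k)$ summand'' has no precise content as stated. The missing idea is the paper's $\otimes$-nilpotence argument:
\begin{itemize}
\item Take a zero-differential weight filtration. Its lowest piece $A_m\neq 0$ is a sum of invertibles, and the attaching map $f\colon A_{>m}[-1]\to A_m$ factors through an object of weights $\geq m+1$.
\item By duality it suffices that every $s\colon Y\to\unit$ with $Y$ of weight $\geq 1$ is $\otimes$-nilpotent on $\cone(r_0)$.
\item This follows from $\Hom(\cone(r_0)(i)[j],\cone(r_0))=0$ for $j-2i\geq 1$, which comes from injectivity of $r_0$ and $\mathbb{E}\mathrm{nd}(\unit)/(r_0)\cong\F_2$ concentrated in bidegree $(0,0)$.
\item Then $\cone(r_0)\otimes A_m^{\otimes n}\in\langle X\rangle$, hence $\cone(r_0)\in\langle X\rangle$.
\end{itemize}
A genuinely different way to finish would be a classification of objects. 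Distinct twists are mutually orthogonal, so one could realise each twist-block as perfect modules over an $\F_2$-algebra with homotopy $\F_2[r_0]$, a graded PID. One would then show every compact object is a sum of shifted twists of $\unit$ and of $\cone(r_0^k)$. Your sketch does not do this either.

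Second, even the dichotomy you aim for does not give the chain $0\subset\langle\cone(r_0)\rangle\subset\cat{L}_2^c$. Knowing that $\langle X\rangle$ contains $\unit$ or some $\cone(r_0^k)$ only shows that every non-zero prime contains $\langle\cone(r_0)\rangle$; this is the paper's step $\supp(\cone(r_0))=\{0\}$. It does not exclude primes strictly above $\langle\cone(r_0)\rangle$. Excluding them is the separate statement $U(\cone(r_0))=\{\langle\cone(r_0)\rangle\}$, i.e. that the $r_0$-periodic localisation has a one-point spectrum.

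Your final paragraph only concludes that this localisation is local with $0$ prime. That proves $\langle\cone(r_0)\rangle$ is prime, not that it is the only non-zero prime. The graded-field computation you already have gives the stronger statement, which is exactly how the paper closes this step. The localisation is generated by twists of its unit and $\mathbb{E}\mathrm{nd}=\F_2[r_0^{\pm 1}]$ is a graded field. Hence every compact object is a finite direct sum of twists of the unit, every non-zero object generates the whole category, and the spectrum is a point.

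The comparison-map paragraph is fine, but it only guarantees that a second prime exists, not that it is unique.
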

\begin{proof}
We have already established in  Lemma \ref{lemma: chow weight structure and locality of cellular tt-categories} that $0$ is prime. To prove that $\langle \cone(r_0) \rangle$ is the only non-zero prime ideal we will establish the following.
\begin{enumerate}
    \item $\supp(\cone(r_0))=\{0\}$.
    \item $U(\cone(r_0))=\{\langle \cone(r_0) \rangle\}$.
\end{enumerate}
From this the description of the points is immediate; moreover, the specialisation relations are clear and the closed subsets are the specialisation-closed ones.

To prove the first claim, we will show in Proposition \ref{prop: case m=2 support of cone(r_0)}  that for any non-zero object $A$, $\cone(r_0) \in \langle A \rangle$, from which (1) immediately follows.

For the second claim, we begin by formally inverting the morphism $r_0$. Recall from the proof of Lemma \ref{lemma: chow weight structure and locality of cellular tt-categories} that $\cat{L}_2$ admits a stable symmetric monoidal $\infty$-categorical enhancement $\mathcal{L}_2$. In $\mathcal{L}_2$ (writing $\Sigma$ for the suspension), we form the sequential colimit
\[\tilde{\mathcal{D}}_{r_0}:=\colim_n(\unit \xrightarrow{r_0} \Sigma^{-1}\unit \xrightarrow{\Sigma^{-1}r_0} \Sigma^{-2}\unit \to \dots ),\]
with canonical map $\eta: \unit \to \tilde{\mathcal{D}}_{r_0}$. Since $\otimes$ preserve colimits, then $\tilde{\mathcal{D}}_{r_0}$ is an idempotent. Setting $\mathcal{D}_{r_0}:=\mathrm{fib}(\eta)$, we obtain a fiber sequence of complementary idempotents $\mathcal{L}_2$
\[\mathcal{D}_{r_0} \to \unit \xrightarrow{\eta} \tilde{\mathcal{D}}_{r_0}.\]
Passing to the homotopy category $\cat{L}_2$, the above fiber sequence yields the associated idempotent triangle
\[\mathcal{D}_{r_0} \to \unit \to \tilde{\mathcal{D}}_{r_0} \to \mathcal{D}_{r_0}[1].\]
Finally, tensoring by the right idempotent $\tilde{\mathcal{D}}_{r_0}$ defines a smashing localisation of $\cat{L}_2$ whose kernel is $\Loc_{\otimes}(\cone(r_0))$: one inclusion is clear since $\tilde{\mathcal{D}}_{r_0} \otimes \cone(r_0) \cong 0$; for the other inclusion, note that $\tilde{\mathcal{D}}_{r_0} \otimes A \cong 0 $ implies that  $\mathcal{D}_{r_0} \otimes A \cong A$, and this belongs to $\Loc_{\otimes}(\cone(r_0))$ since 
\[\mathcal{D}_{r_0} \cong \hocolim_n\big(\cone(\unit \xrightarrow{r_0^n} \unit[-n])\big)[-1] \in \Loc_{\otimes}(\cone(r_0)).\] 
This means that the tt-category
\[\cat{M}_2=\tilde{\mathcal{D}}_{r_0} \otimes \cat{L}_2\]
gets identified with the localisation of $\cat{L}_2$ at the localising tensor ideal generated by $\cone(r_0)$. This reduces (2) to proving that $\Spc(\cat{M}_2^c)=\{0\}$. To show this, first observe that from the idempotent triangle of $\tilde{\mathcal{D}}_{r_0}$ in $\cat{L}_2$, we can deduce that 
\[\mathbb{E}\mathrm{nd}(\tilde{\mathcal{D}}_{r_0})=\colim_n(\mathbb{E}\mathrm{nd}(\unit)),\]
where the transition maps are given by multiplication by $r_0$. In particular, the above ring is isomorphic to the graded field $\F_2[r_0, r_0^{-1}]$. Because $\cat{M}_2$ is generated by $\tilde{\mathcal{D}}_{r_0}(i)$ for $i \in \Z$, then any object $\cat{M}_2^c$ is isomorphic to a finite direct sum of $\tilde{\mathcal{D}}_{r_0}(i)$ for $i \in \Z$. From this, it is clear that $\Spc(\cat{M}_2^c)=\{0\}$. 
\end{proof}

\begin{prop}\label{prop: case m=2 support of cone(r_0)}
\emph{For every non-zero object $A \in \cat{L}_2^c$, $\cone(r_0) \in \langle A \rangle.$}
\end{prop}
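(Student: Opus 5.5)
We use the bounded Chow weight structure of Lemma \ref{lemma: chow weight structure and locality of cellular tt-categories}, whose heart $\cat{H}$ is semisimple and spanned by the $\unit\{i\}$. The plan is to show that a nonzero $A$ produces, inside $\langle A\rangle$, an object built from finitely many copies of $\unit\{i\}$ with differentials given by powers of $r_0$, and then to extract $\cone(r_0)$ from it.

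\textbf{Step 1: morphisms between pure shifts.} Under Assumption \ref{assumption: Spc of Kc2} we have $\mathbb{E}\mathrm{nd}(\unit)=\F_2[r_0]$ with $r_0$ of bidegree $(0)[-1]$. Hence $\Hom(\unit\{i\},\unit\{j\}[n])$ is nonzero only for $i=j$ and $n\le 0$, and is then spanned by $r_0^{-n}$. So all maps between heart objects, up to shifts, are multiples of powers of $r_0$, and they only go between copies of the same twist. Consequently a weight complex $t(A)$ splits into pieces indexed by the twist $\{i\}$, and one expects
\[A\cong\bigoplus_i A_i\{i\},\qquad A_i\in\langle\unit\rangle_{\mathrm{thick}}.\]
The cleanest way to see this is to note that $\Hom(\unit\{i\}[a],\unit\{j\}[b])=0$ for $i\neq j$, so the thick subcategories generated by the different twists are mutually orthogonal. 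Replacing $A$ by a nonzero summand $A_i$ and untwisting (twists are invertible, so this does not change the ideal), we may assume $A$ lies in the thick subcategory generated by $\unit$.

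\textbf{Step 2: the untwisted part as a derived category.} The thick subcategory generated by $\unit$ should be equivalent to perfect modules over the graded ring $\F_2[r_0]$, a polynomial ring on a single generator of homological degree $1$. This is a graded PID, and I would use its classification of perfect modules: every nonzero $A$ is a finite direct sum of shifts of $\unit$ and of $\cone(r_0^k)$ for $k\ge1$. This follows by induction on weight length, using that maps $\unit[a]\to\unit[b]$ are $0$ or $r_0^{b-a}$ up to a unit, so that a Smith normal form argument applies. I expect this to be the main obstacle. I would first try to establish it via the formality of $\mathbb{E}\mathrm{nd}(\unit)$ restricted to twist $0$, which is free on a generator. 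Failing that, I would use a direct octahedral induction, cancelling a map between heart pieces that is an isomorphism or otherwise recognising it as $r_0^k$.

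\textbf{Step 3: conclude.} If $A$ has a summand $\unit[a]$, then $\unit\in\langle A\rangle$ and we are done. Otherwise $\cone(r_0^k)\in\langle A\rangle$ for some $k\ge1$. The octahedral axiom applied to $r_0^k=r_0\circ r_0^{k-1}$ gives a triangle relating $\cone(r_0)$, $\cone(r_0^{k})$ and $\cone(r_0^{k-1})$. So I would use the standard fact that $\cone(r_0)\otimes\cone(r_0^k)\cong\cone(r_0)\oplus\cone(r_0)[?]$. This holds since $r_0^k$ acts on $\cone(r_0)$ by zero: $r_0$ is nilpotent, indeed zero, on $\cone(r_0)$, because $r_0\otimes\cone(r_0)$ is null-homotopic up to the usual $\F_2$-coefficient argument, which I would verify directly via $\Hom(\cone(r_0),\cone(r_0))$. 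Thus $\cone(r_0)$ is a summand of $\cone(r_0)\otimes A'\in\langle A\rangle$, and thick ideals are closed under summands.
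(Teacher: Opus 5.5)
Your proof is correct, but it follows a genuinely different route from the paper's.

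\textbf{Your route.} Under Assumption~\ref{assumption: Spc of Kc2}, $\mathbb{E}\mathrm{nd}(\unit)=\F_2[r_0]$ is concentrated in twist $0$. So distinct Tate twists are mutually orthogonal, and $\cat{L}_2^c$ splits as an orthogonal sum of twisted copies of the thick closure of $\unit$. You then classify the objects there as finite sums of shifts of $\unit$ and of $\cone(r_0^k)$. You finish with $\cone(r_0)\otimes\cone(r_0^k)\cong\cone(r_0)[-k]\oplus\cone(r_0)[1]$.

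\textbf{The paper's route.} The paper never classifies objects. It takes a weight filtration with zero differentials, which exists because the Chow heart is semisimple. This writes $A$ as the cone of an attaching map $f\colon A_{>m}[-1]\to A_m$, with $A_m$ a nonzero sum of twisted shifts of $\unit$. It then shows $f$ is $\otimes$-nilpotent on $\cone(r_0)$, using the vanishing $\Hom(\cone(r_0)(i)[j],\cone(r_0))=0$ for $j-2i\geq 1$.

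Your Step~3 rests on exactly this vanishing at $(i,j)=(0,1)$: $r_0\otimes\mathrm{id}_{\cone(r_0)}$ lies in $\Hom(\cone(r_0),\cone(r_0)[-1])=0$. That direct computation is what justifies the step, not an appeal to an unspecified ``$\F_2$-coefficient argument''.

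\textbf{What each buys.} Your approach yields strictly more: a complete description of the objects, which makes all of Theorem~\ref{thm: case m=1} transparent. But it depends on the very special shape of $\mathbb{E}\mathrm{nd}(\unit)$ here, a graded PID living in a single twist. The paper's method uses only the vanishing of Hom groups in a half-plane. That is why the same template carries over to $\cat{L}_m$ for $m\geq 2$ and to $\cat{K}_\infty$. There, twists interact, the endomorphism rings are far from hereditary, and no such classification is available.

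\textbf{Executing Step~2.} The cleanest version needs neither formality nor induction on weight length. The latter is awkward, since after one step you must control maps out of $\cone(r_0^k)$. Instead:
\begin{enumerate}
\item Put $H_*(A)=\bigoplus_n\Hom(\unit[n],A)$. This is a finitely generated graded $\F_2[r_0]$-module.
\item Choose a finite free cover $P_0\to H_*(A)$ and realise it by a map $\tilde P_0\to A$, where $\tilde P_0$ is a finite sum of shifts of $\unit$.
\item The fibre $K$ of this map has $H_*(K)\cong\ker(P_0\to H_*(A))$, which is free. Hence $K$ is again a finite sum of shifts of $\unit$: a map inducing an isomorphism on $H_*$ is an isomorphism, because $\unit$ generates.
\item So $A$ is the cone of a map between finite sums of shifts of $\unit$, given by a homogeneous matrix over $\F_2[r_0]$. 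Bring this matrix to Smith normal form.
\end{enumerate}

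\textbf{A small slip.} A nonzero map $\unit[a]\to\unit[b]$ exists only for $a\geq b$, and it is $r_0^{a-b}$, not $r_0^{b-a}$.
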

\begin{proof}
The existence of a bounded weight structure with semisimple heart ensures that any object admits a weight filtration for which the weight complex has zero differentials \cite[Proposition 5.10]{vishik2024isotropic}. If $A$ is a non-zero object,  we have a distinguished triangle,
\[\begin{tikzcd}[ampersand replacement=\&, sep=small]
	{A_{>m}[-1]} \& {A_m} \& A \& {A_{>m}.}
	\arrow["f", from=1-1, to=1-2]
	\arrow[from=1-2, to=1-3]
	\arrow[from=1-3, to=1-4]
\end{tikzcd}\]
where $A_m$ is a non-empty direct sum of twisted suspensions of the unit. In particular, to prove the proposition it is sufficient to show that $f$ is $\otimes$-nilpotent on $\cone(r_0)$. Because the weight complex of $A$ was chosen with zero differentials, the morphism $f$ factors through an object in $(\cat{L}^c_2)^{\geq m+1}$. Hence, by duality it suffices to prove that for any object $Y \in (\cat{L}^c_2)^{\geq 1}$, any morphism $s:Y \to \unit$ is $\otimes$-nilpotent on $\cone(r_0)$. We will do this by proving the stronger statement that 
\[ \Hom(\cone(r_0)(i)[j], \cone(r_0))=0, \quad \text{ for } j-2i \geq 1. \]
This is clearly sufficient since $Y$ is an extension of $\unit(i)[j]$ for $i$ and $j$ in the above range. Applying $\Hom(-, \cone(r_0))$ to the defining triangle of $\cone(r_0)(i)[j]$, we observe that it is sufficient to check that 
\[ \text{(I)} \, \, \Hom(\unit(i)[j-1], \cone(r_0))=0, \quad \text{(II)} \, \, \Hom(\unit(i)[j+1],  \cone(r_0))=0 .\]
Moreover, applying $\Hom(\unit(i)[j'], -)$ to the defining triangle of $\cone(r_0)$, and using the fact that multiplication by $r_0$ is injective in $\mathbb{E}\mathrm{nd}(\unit)$, we deduce that 
\[ \Hom(\unit(i)[j'], \cone(r_0))\cong \Hom(\unit(i)[j'], \unit[-1])/(\im(r_0)_*).\]
The term on the right can be computed by modding out $\mathbb{E}\mathrm{nd}(\unit)$ by the ideal generated by $r_0$, which is isomorphic to $\F_2$ concentrated in bi-degree $(0,0)$. Since the condition $j-2i \geq 1$ implies we are away from degree $(0,0)$, we see that (I) and (II) vanish.
\end{proof}

\subsection{Computation of $\Spc(\cat{K}^c_n)$ for $ 3\leq n < \infty$ } \label{section: Spc of Kcm}
Let $\alpha \in \K_m^M(k)/2$ be a non-zero pure symbol of degree $m \geq 2$, and consider 
\[\cat{L}_m := \DTM_{\tilde{\mathcal{X}}_\alpha}(k; \F_2)\] 
In this section, we shall work under the following assumption.
\begin{assumption}\label{assumption: computation m geq 2}
$S^\alpha_l = 0$ for all degrees $l \geq N$, where $N$ is some sufficiently large integer constant. Moreover, we will also assume that $\rho= 0$ in $S^\alpha_*$. Under these conditions, the bi-graded endomorphism ring of the unit simplifies as follows 
\[\mathbb{E}\mathrm{nd}(\tilde{\mathcal{X}}_\alpha) = S_*^\alpha[r_0, \dots, r_{m-1}] \big/ (r_i^2)_{0 \leq i \leq m-2}.\]
\end{assumption}
\begin{remark}\label{remark: support of the bigraded endomorphism ring of the unit for m geq 2}
Recall from Remark \ref{remark: support of End tilde Xalpha} that we may visualise $\mathbb{E}\mathrm{nd}(\tilde{\mathcal{X}}_\alpha)$ inside the lattice $\Z^2$. Assumption \ref{assumption: computation m geq 2} forces the morphisms $r_i$ for $0 \leq i \leq m-2$ to be nilpotent, so the only source of unboundedness comes from the polynomial generator $r_{m-1}$. Writing $d:=d_{m-1}$, the powers of $r_{m-1}$ lie on the line $y=(2+1/d)x$ and $\mathbb{E}\mathrm{nd}(\tilde{\mathcal{X}}_\alpha)$ is supported below (and including) this line. Moreover, since the diagonal coming from $S_*^\alpha$ is bounded by assumption, we may find some sufficiently large constant $K$ such that $\mathbb{E}\mathrm{nd}(\tilde{\mathcal{X}}_\alpha)$ is supported above the line $y=(2+1/d)x-K$.
\end{remark}
\begin{lemma}
\emph{The tt-categories $\cat{K}_n = \DTM_{\tilde{\mathcal{X}}_{\rho^{n-1}}}(F(Q_{n}); \F_2)$ for $ 3\leq n < \infty$  satisfy Assumption \ref{assumption: computation m geq 2}}.
\end{lemma}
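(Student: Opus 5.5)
Here we take $k = F(Q_n)$ and $\alpha = \rho^{n-1}\in \K^M_{n-1}(F(Q_n))/2$, so $m=n-1\geq 2$, and check the two conditions of Assumption \ref{assumption: computation m geq 2} in turn. Before anything else, $\alpha$ has to be non-zero in $\K^M_{n-1}(F(Q_n))/2$, or the symbol is not a legitimate choice. This follows from the identification $\tilde{\mathcal{X}}_{\rho^{n-1}}\cong \tilde{\mathcal{X}}_{n-1}|_{F(Q_n)}$ used in Lemma \ref{lemma: definition of pin}. Alternatively, one can argue directly: the Pfister quadric $Q_{n-1}$ has dimension $2^{n-2}-1$, which is smaller than $\dim Q_n$. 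By the Cassels–Pfister subform theorem, $Q_{n-1}$ therefore stays anisotropic over $F(Q_n)$, and hence $\rho^{n-1}\neq 0$ there.

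\emph{Boundedness.} The quadric $Q_n$ is anisotropic over $F$ of dimension $2^{n-1}-2$. Lemma \ref{lemma: function fields of real closed anisotropic quadrics} then gives $\K^M_l(F(Q_n))/2=0$ for all $l\geq 2^{n-1}-1$. The ring $S^\alpha_*$ is a quotient of $\K^M_*(F(Q_n))/2$, so $S^\alpha_l=0$ for $l\geq N:=2^{n-1}-1$.

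\emph{Vanishing of $\rho$ in $S^\alpha_*$.} By definition $S^\alpha_*=(\K^M_*/2)/\ker(\alpha\cdot)$, so we must show $\rho\cdot\alpha=\rho^n=0$ in $\K^M_n(F(Q_n))/2$. Since $Q_n$ is the Pfister quadric of $\rho^n$, the form $q_{\rho^n}$ becomes isotropic over $F(Q_n)$. An isotropic Pfister form is hyperbolic, so its class in $I^n/I^{n+1}$ vanishes. By the Milnor conjecture \cite{orlov2007exact} this class is $\rho^n$ in $\K^M_n/2$, hence $\rho^n=0$. This is the standard fact that a symbol dies over the function field of its own Pfister quadric. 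Note that $\rho$ itself is non-zero in $\K^M_1$; it is only its image in $S^\alpha_*$ that we need to kill.

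\emph{Consequence for the endomorphism ring.} With $\rho=0$ in $S^\alpha_*$, the relations $r_i^2=r_{i+1}\rho$ from \cite[Theorem 3.5]{vishik2022isotropic} reduce to $r_i^2=0$ for $0\leq i\leq m-2$, which is the stated presentation. The only step I expect to need care is the non-vanishing of $\alpha$. I would discharge it via the Cassels–Pfister dimension comparison above, or equivalently via the known non-triviality of $\tilde{\mathcal{X}}_{n-1}|_{F(Q_n)}$ already implicit in Table \ref{tab:geometric_functors}.
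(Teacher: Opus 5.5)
Your strategy is the same as the paper's. You get boundedness of $S^\alpha_*$ from Lemma~\ref{lemma: function fields of real closed anisotropic quadrics}, and $\rho^n=0$ in $\K^M_n(F(Q_n))/2$ gives $\rho\in\ker(\rho^{n-1}\cdot)$; the presentation of $\mathbb{E}\mathrm{nd}(\tilde{\mathcal{X}}_\alpha)$ then follows.

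However, your dimension count is wrong. For $n\geq 3$, $Q_n$ is the projective quadric of the $n$-fold Pfister form $q_{\rho^n}$, which has dimension $2^n$. So $\dim Q_n=2^n-2=2d_{n-1}$, not $2^{n-1}-2$. This is consistent with $\M(Q_n)\cong \M_n\otimes \M(\PP^{d_{n-1}})$ and $\M_n|_{\bar F}\cong \T\oplus\T\{d_{n-1}\}$. Likewise $\dim Q_{n-1}=2^{n-1}-2$.

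Hence the lemma only gives $\K^M_l(F(Q_n))/2=0$ for $l\geq 2^n-1$, and your bound $N=2^{n-1}-1$ is actually false. For $n=3$, write $F(Q_3)=F(x_1,\dots,x_6)(\sqrt{-(1+x_1^2+\dots+x_6^2)})$. Iterated residues along $x_1,x_2,x_3$ reduce $\rho^2\{x_1,x_2,x_3\}$ to $\rho^2$ over the function field of the quadric of $\langle 1,1,1,1,1\rangle$. That field has level $4$, so $\rho^2$ is non-zero there, and hence $S^\alpha_3\neq 0$ already. Since the assumption only asks for some sufficiently large $N$, taking $N=2^n-1$ repairs the step at no cost.

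The paper's proof takes the non-vanishing of $\alpha=\rho^{n-1}$ over $F(Q_n)$ for granted. Your first justification, the identification $\tilde{\mathcal{X}}_{\rho^{n-1}}\cong\tilde{\mathcal{X}}_{n-1}|_{F(Q_n)}$, cannot establish it on its own: if $\rho^{n-1}$ vanished, both sides would simply be zero. Your subform-theorem argument is the right one once the dimensions are corrected. If the anisotropic form $q_{\rho^{n-1}}$ became isotropic over $F(Q_n)$, it would be hyperbolic. Cassels--Pfister would then make the $2^n$-dimensional form $q_{\rho^n}$ similar to a subform of the $2^{n-1}$-dimensional form $q_{\rho^{n-1}}$, which is impossible.
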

\begin{proof}
From Lemma \ref{lemma: function fields of real closed anisotropic quadrics} we know that $\K_l^{M}(F(Q_n))/2=0$ for $l \geq \mathrm{dim}(Q_n)=2^n-2$. Moreover, $\rho^{n}=0$ in $\K_n^{M}(F(Q_n)/2$, which means that $\rho \in \ker(\rho^{n-1} \cdot)$. From this, the required verification becomes immediate.
\end{proof}
For ease of notation, we shall denote the tensor unit $\tilde{\mathcal{X}}_{\alpha}$ of $\cat{L}_m$ by $\unit$. By the previous lemma, the computations of $\Spc(\cat{K}^c_n)$ for $ 3\leq n < \infty$  follow from the next result.
\begin{thm}\label{thm: case m geq 3}
 \emph{Under Assumption \ref{assumption: computation m geq 2}, the Balmer spectrum of $\cat{L}_m^c$ is homeomorphic to the following two-point Sierpiński space.}
 \[\begin{tikzcd}[ampersand replacement=\&,sep=small]
	0 \\
	{\langle \cone(r_{m-1})\rangle}
	\arrow[no head, from=1-1, to=2-1]
\end{tikzcd}\]
\end{thm}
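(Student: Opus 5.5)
The plan mirrors the proof of Theorem \ref{thm: case m=1}. Lemma \ref{lemma: chow weight structure and locality of cellular tt-categories} already shows that $0$ is prime in $\cat{L}_m^c$. It then suffices to establish
\[ \text{(1)} \ \supp(\cone(r_{m-1}))=\{0\}, \qquad \text{(2)} \ U(\cone(r_{m-1}))=\{\langle \cone(r_{m-1})\rangle\}. \]
Given these, the points are $0$ and $\langle \cone(r_{m-1})\rangle$. The specialisation relation is forced, and on a two-point space the closed subsets are exactly the specialisation-closed ones.

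For (2), we would invert $r_{m-1}$ in the $\infty$-enhancement exactly as before. Set $\tilde{\mathcal{D}}_{r_{m-1}}:=\colim(\unit\xrightarrow{r_{m-1}}\unit(-d)[-2d-1]\to\cdots)$ with $d=d_{m-1}$; this is a right idempotent whose associated smashing localisation $\cat{M}_m$ has kernel $\Loc_\otimes(\cone(r_{m-1}))$. The bigraded endomorphism ring of the new unit is
\[ \mathbb{E}\mathrm{nd}(\unit)[r_{m-1}^{-1}] = S^\alpha_*[r_0,\dots,r_{m-2}]/(r_i^2)\,[r_{m-1}^{\pm1}]. \]
This is no longer a graded field, but it is a graded ring whose positive-degree part $S^\alpha_{\ge 1}$ and the $r_i$ for $i\le m-2$ are all nilpotent. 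Indeed, $S^\alpha$ vanishes in degrees $\ge N$, and $r_i^2=0$ for $i\le m-2$ under Assumption \ref{assumption: computation m geq 2}. So every homogeneous element of the nilpotent ideal $I$ generated by these is $\otimes$-nilpotent.

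To conclude that $\Spc(\cat{M}_m^c)=\{0\}$, we would cite or reprove the Sparks-type criterion from the introduction: if $\cat{M}_m^c$ is generated by twists of the unit and every non-invertible homogeneous endomorphism of the unit is nilpotent, then the spectrum is a singleton. Concretely, a prime $\p$ must contain $\cone(x)$ for every $x\in I$, because $\cone(x)\otimes\cone(x)\otimes\cdots$ and $\cone(x^n)$ lie in the same thick ideal and $\cone(x^n)=\unit\oplus\unit[\ast]$ up to twist. Hence every prime contains $\cone(x)$ for each element of $I$. In the quotient by these, the unit has endomorphisms forming a graded field $\F_2[r_{m-1}^{\pm1}]$, and an argument with a weight structure (or a direct dévissage along $I$) shows that every nonzero object generates the unit. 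The cleanest route is probably to use the weight structure of Lemma \ref{lemma: chow weight structure and locality of cellular tt-categories}, transported along the localisation, to show that $\cat{M}_m^c$ is local with the unit in $\langle A\rangle$ for all $A\neq0$.

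For (1), we would copy Proposition \ref{prop: case m=2 support of cone(r_0)}. Take a weight filtration of $A\neq 0$ with zero differentials. It then suffices to show that every map $s\colon Y\to\unit$ with $Y\in(\cat{L}_m^c)^{\geq1}$ is $\otimes$-nilpotent on $\cone(r_{m-1})$. Single-step vanishing $\Hom(\cone(r_{m-1})(i)[j],\cone(r_{m-1}))=0$ will fail now, since $\mathbb{E}\mathrm{nd}(\unit)/(r_{m-1})$ is nonzero in finitely many bidegrees with $j-2i\ge1$ (the $r_i$ for $i<m-1$ and elements of $S^\alpha$). Instead we would use the geometry of Remark \ref{remark: support of the bigraded endomorphism ring of the unit for m geq 2}. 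The ring $\mathbb{E}\mathrm{nd}(\unit)/(r_{m-1})$ is finite: it is supported in a bounded region, between the lines $y=(2+1/d)x$ and $y=(2+1/d)x-K$ and cut off by the nilpotence. Hence $\Hom(\unit(i)[j],\cone(r_{m-1}))$, and therefore $\Hom(\cone(r_{m-1})(i)[j],\cone(r_{m-1}))$, vanishes once $j-2i$ exceeds some constant $C$. A $\otimes$-power $s^{\otimes n}$ has source in weight $\geq n$, i.e. built from $\unit(i)[j]$ with $j-2i\ge n$, so for $n>C$ the map $s^{\otimes n}\otimes\cone(r_{m-1})$ vanishes after using the duality to identify it with an element of these Hom groups. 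We expect the main obstacle to be this bookkeeping: making sure that the region where $\mathbb{E}\mathrm{nd}(\unit)/(r_{m-1})$ is supported really has bounded $j-2i$. This needs care, since the mixed monomials $r_{m-1}^a\cdot(\text{nilpotent})$ are killed modulo $r_{m-1}$ while the nilpotent-only part is finite. Lesser care is needed for the singleton-spectrum step in (2).
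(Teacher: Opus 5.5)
Your reduction to (1) and (2) matches the paper. So does your argument for (1), via the bounded support of $\mathbb{E}\mathrm{nd}(\unit)/(r_{m-1})$. The gap is in (2), i.e.\ in proving $\Spc(\cat{M}_m^c)=\{0\}$, which is where the real work lies.

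Your concrete argument there is backwards. For nilpotent $x\in I$ you have $\cone(x^n)\in\langle\cone(x)\rangle$ and $\cone(x^n)\cong\unit\oplus\unit[\ast]$ up to twist. Together these give $\unit\in\langle\cone(x)\rangle$, so $\cone(x)$ lies in \emph{no} prime, and the ``quotient by these'' is the zero category. More fundamentally, nilpotence of the non-invertible homogeneous elements of $\mathbb{E}\mathrm{nd}(\tunit)$ only controls maps between twists of the unit. To get $\tunit\in\langle B\rangle$ for an arbitrary non-zero $B$, you need $\otimes$-nilpotence of maps such as $f\colon A_{>s}[-1]\to A_s$ from a weight filtration of $B$. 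Their $\otimes$-powers involve objects spread over ever larger regions of bidegrees. This does not follow formally from the ring structure, and appealing to an unspecified ``Sparks-type criterion'' does not supply it.

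Your suggested fallback, transporting the Chow weight structure of Lemma~\ref{lemma: chow weight structure and locality of cellular tt-categories} to $\cat{M}_m^c$, cannot work. Inverting $r_{m-1}$ gives $\tunit\cong\tunit\{-d\}[-1]$, so $\Hom(\tunit,\tunit\{d\}[1])\cong\mathrm{End}(\tunit)\neq 0$. This violates negativity for any heart containing all $\tunit\{i\}$. Locality would in any case only show that $0$ is prime, not that it is the only prime.

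The paper instead uses a periodic Chow weight structure whose heart is generated by $\tunit\{i\}$ for $0\le i\le d-1$ only. This heart is neither tensor nor semisimple, so Remark~\ref{remark: weight complex functors and local tt-categories} is unavailable. The paper then shows that, after a twist, any non-zero $B$ has a weight complex whose lowest term is a sum of copies of $\tunit[s]$ and whose last differential is zero. This uses Gaussian elimination over $\mathrm{End}(\tunit)=\F_2$ together with $\Hom(\tunit\{i\},\tunit)=0$ for $0<i\le d-1$. Finally, it proves that the adjoint $\tunit\to E$ of the resulting map is $\otimes$-nilpotent by a slope estimate. $E^{\otimes n}$ is built from $\tunit(x)[y]$ with $-n(d-1)\le x\le 0$ and $y-2x\le -n$. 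Once $n>Kd$, this region lies strictly below the strip $(2+1/d)x-K\le y\le (2+1/d)x$ that supports $\mathbb{E}\mathrm{nd}(\tunit)$. An argument of this kind is what your step (2) is missing.
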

\begin{proof}
Because $0$ is prime by Lemma \ref{lemma: chow weight structure and locality of cellular tt-categories}, then arguing as in the proof of Theorem \ref{thm: case m=1}, it suffices to establish the following.
\begin{enumerate}
    \item $\supp(\cone(r_{m-1}))=\{0\}$.
    \item $U(\cone(r_{m-1}))=\{\langle \cone(r_{m-1}) \rangle\}$.
\end{enumerate}
The first claim will follow from Proposition \ref{prop: case m geq 3 support of cone(r_m-1)}. For the second claim, we argue analogously to the proof of Theorem \ref{thm: case m=1} and invert $r_{m-1}$ by considering the smashing localisation corresponding to the following idempotent triangle,
\[\mathcal{D}_{r_{m-1}} \to \unit \to \tilde{\mathcal{D}}_{r_{m-1}} \to \mathcal{D}_{r_{m-1}}[1],\]
where 
\[\tilde{\mathcal{D}}_{r_{m-1}}\cong \hocolim_{n}\big(\unit \xrightarrow{r_{m-1}} \unit(-d_{m-1})[-2d_{m-1}-1]\xrightarrow{r_{m-1}(-d_{m-1})[-2d_{m-1}-1]}\dots).\] 
In particular, the tt-category
\[\cat{M}_m=\tilde{\mathcal{D}}_{r_{m-1}} \otimes \cat{L}_m\]
gets identified with the localisation of $\cat{L}_m$ at the localising tensor ideal generated by $\cone(r_{m-1})$. This reduces (2) to showing that $\Spc(\cat{M}_m^c)=\{0\}$. We will prove this in Proposition \ref{prop: case m geq 3 open of cone(r_m-1)}, where we will use an appropriate weight structure that we will define on $\cat{M}^c_m$ in Lemma \ref{lemma: weight structure on Mcm}, to show that the tt-ideal generated by any non-zero object is the whole tt-category $\cat{M}^c_m$.
\end{proof}

\begin{prop}\label{prop: case m geq 3 support of cone(r_m-1)}
\emph{For every non-zero object $A \in \cat{L}_m^c$, $\cone(r_{m-1}) \in \langle A \rangle.$}  
\end{prop}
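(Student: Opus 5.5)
I will follow the strategy of Proposition \ref{prop: case m=2 support of cone(r_0)}. By Lemma \ref{lemma: chow weight structure and locality of cellular tt-categories}, $\cat{L}_m^c$ carries a bounded Chow weight structure with semisimple heart generated by the $\unit\{i\}$. So by \cite[Proposition 5.10]{vishik2024isotropic}, every non-zero $A$ has a weight complex with zero differentials.

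Let $m_0$ be the lowest non-vanishing weight of $A$. This gives a triangle
\[A_{>m_0}[-1]\xrightarrow{f} A_{m_0}\to A\to A_{>m_0},\]
where $A_{m_0}$ is a non-empty sum of twisted suspensions of the unit. If $f$ is $\otimes$-nilpotent on $\cone(r_{m-1})$, then $A_{m_0}\otimes\cone(r_{m-1})$ is a direct summand of some $A^{\otimes k}\otimes \cone(r_{m-1})$, up to the standard argument. Hence $\cone(r_{m-1})\in\langle A\rangle$. Since the differentials vanish, $f$ factors through an object of positive Chow weight relative to $A_{m_0}$. After dualising, it therefore suffices to show that every morphism $s\colon Y\to\unit$ with $Y\in(\cat{L}_m^c)^{\geq 1}$ is $\otimes$-nilpotent on $C:=\cone(r_{m-1})$.

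Unlike the case $m=1$, I cannot expect the vanishing $\Hom(C(i)[j],C)=0$ for all $j-2i\geq 1$. The quotient $\mathbb{E}\mathrm{nd}(\unit)/(r_{m-1})\cong S^\alpha_*[r_0,\dots,r_{m-2}]/(r_i^2)$ is non-zero off the origin: the $r_i$ with $i\le m-2$ and the diagonal $S^\alpha_*$ contribute. However, by Assumption \ref{assumption: computation m geq 2} and Remark \ref{remark: support of the bigraded endomorphism ring of the unit for m geq 2}, this quotient is \emph{finite}, since the $r_i$ are square-zero and $S^\alpha$ is bounded. Arguing as before with the long exact sequences for $\cone$, using that $r_{m-1}$ acts injectively on $\mathbb{E}\mathrm{nd}(\unit)$ (a free $\F_2[r_{m-1}]$-module), I get that $\Hom(\unit(i)[j],C)$ and hence $\Hom(C(i)[j],C)$ are supported in a bounded region $B\subset\Z^2$ of bidegrees.

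The nilpotence argument is then a degree count. Write $Y$ as an extension of $\unit(i)[j]$ with $j-2i\geq 1$. The $k$-fold tensor $s^{\otimes k}\otimes C\colon Y^{\otimes k}\otimes C\to C$ is built from components $C(i)[j]\to C$ with $j-2i\geq k$. The key point is that the bounded region $B$ lies in a strip $|j-2i|\le K'$. So for $k>K'$ all these components vanish. I would then conclude that $s^{\otimes k}\otimes C=0$, or at least that it is zero after filtering $Y^{\otimes k}$ by its finitely many cells and using a further power, via the usual ``extension of nilpotent maps is nilpotent'' lemma.

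The main obstacle I expect is this filtration step. A vanishing of the graded components does not immediately give vanishing of the map from an iterated extension. The plan is to use that each cell contributes a morphism in a Hom group that vanishes, and to absorb the extension problems by raising to a power bounded by the number of cells times $K'$. I also need to check carefully the bound on the support of $\Hom(\unit(i)[j],C)$ coming from the finite quotient.
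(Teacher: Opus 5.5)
Your proposal is correct and follows the paper's own proof. It uses the same reduction, via a weight complex with zero differentials, to $\otimes$-nilpotence of $s\colon Y\to\unit$ on $C=\cone(r_{m-1})$. It then uses the same fact that $\mathbb{E}\mathrm{nd}(\unit)/(r_{m-1})$ has bounded bidegree support, giving $\Hom(C(i)[j],C)=0$ for $j-2i\gg 0$.

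The filtration step you flag as the main obstacle is not one. What vanishes is the whole Hom group out of each cell $C(i)[j]$ of $Y^{\otimes k}\otimes C$, not just the components of $s^{\otimes k}\otimes C$. So the long exact sequences give $\Hom(Y^{\otimes k}\otimes C,C)=0$ outright, with no further powers needed, and this is exactly how the paper concludes. Also, ``finite'' should read ``boundedly supported'': the groups $S^\alpha_l$ need not be finite-dimensional, and only bounded support is used.
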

\begin{proof}
Arguing as in Proposition \ref{prop: case m=2 support of cone(r_0)}, it suffices to show that for any object $Y \in (\cat{L}^c_m)^{\geq 1}$, any morphism $s:Y \to \unit$ is $\otimes$-nilpotent on $\cone(r_{m-1})$. We will do this by proving the stronger statement that there is a sufficiently large constant $K$ such that 
\[\Hom(\cone(r_{m-1})(i)[j], \cone(r_{m-1}))=0 \text{ for } j-2i\geq K.\]
This is sufficient since  $Y \in (\cat{L}^c_m)^{\geq 1}$, in particular for $n \geq K$, $Y^{\otimes n}$ is an extension of $\unit(i)[j]$ with $i$ and $j$ in the above range. Applying $\Hom(-, \cone(r_{m-1}))$ to the defining triangle of $\cone(r_{m-1})(i)[j]$, we observe that it is sufficient to check that 
\[ \text{(I)} \, \, \Hom(\unit(i-d_{m-1})[j-2d_{m-1}-1], \cone(r_{m-1}))=0, \quad \text{(II)} \, \, \Hom(\unit(i)[j+1],  \cone(r_{m-1}))=0 .\]
Moreover, applying $\Hom(\cone(r_{m-1})(i')[j'], -)$ to the defining triangle of $\cone(r_{m-1})$, and using the fact that multiplication by $r_{m-1}$ is injective in $\mathbb{E}\mathrm{nd}(\unit)$, we deduce that 
\[ \text{(III)} \, \, \Hom(\unit(i')[j'], \cone(r_{m-1}))\cong \Hom(\unit(i')[j'],\unit(-d_{m-1})[-2d_{m-1}-1])/(\im(r_{m-1})_*).\]
The term on the right can be computed by modding out $\mathbb{E}\mathrm{nd}(\unit)$ by the ideal generated by $r_{m-1}$. By Assumption \ref{assumption: computation m geq 2}, the resulting bi-graded ring is concentrated in a bounded region of $\Z^2$ (cf. Remark \ref{remark: support of the bigraded endomorphism ring of the unit for m geq 2}). The condition $j-2i \geq K$ implies that the bi-degrees of (I) and (II) both lie in the half plane defined by $y-2x \geq K-1$, hence it is clear that since (III) is supported in a bounded region, we may choose $K$ sufficiently large such that (I) and (II) vanish simultaneously.
\end{proof}

It remains to establish that the spectrum of $\cat{M}^c_m$--defined in the proof of Theorem \ref{thm: case m geq 3}--contains $0$ as its unique tt-prime. We begin by defining a weight structure on this tt-category.

First, observe that 
\[\mathbb{E}\mathrm{nd}(\tilde{\mathcal{D}}_{r_{m-1}})\cong\colim_n(\mathbb{E}\mathrm{nd}(\unit)),\]
where the transition maps are given by multiplication by $r_{m-1}$. In particular, the above ring is isomorphic to $\mathbb{E}\mathrm{nd}(\unit)[r_{m-1}^{-1}]$.
\begin{remark}\label{remark: boundedness of bigraded endomorphism ring in Mcm}
Visualising $\mathbb{E}\mathrm{nd}(\tilde{\mathcal{D}}_{r_{m-1}})$ in the lattice $\Z^2$, we observe that--as in Remark \ref{remark: support of the bigraded endomorphism ring of the unit for m geq 2}-- this is also supported below and including the line $y=(2+1/d)x$, with $d:=d_{m-1}$, and above a line $y=(2+1/d)x-K$ for $K$ a sufficiently large fixed constant.
\end{remark}

For ease of notation, we will denote the unit $\tilde{\mathcal{D}}_{r_{m-1}}$ of $\cat{M}_m$ by $\tunit$. 

\begin{lemma}\label{lemma: weight structure on Mcm}
\emph{The category $\cat{M}^c_m$ affords a unique bounded weight structure whose heart $\cat{H}$ is the additive subcategory of $\cat{M}^c_m$ generated by
\[\tunit\{i\}, \quad i=0, \dots, d-1,\]
where $d:=d_{m-1}$.}
\end{lemma}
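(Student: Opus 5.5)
We apply Theorem \ref{thereom: bondarko2010weight} to $\cat{M}^c_m$ with $H=\{\tunit\{i\} : 0\le i\le d-1\}$. The category $\cat{M}^c_m$ is idempotent-complete, being the compact part of the compactly generated category $\cat{M}_m$. So two properties need checking: generation and negativity. Uniqueness and the description of the heart then come from Theorem \ref{thereom: bondarko2010weight} directly.

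\textbf{Generation.} In $\cat{M}_m$ the morphism $r_{m-1}$ becomes invertible, so $\tunit \cong \tunit(-d)[-2d-1] = \tunit\{-d\}[-1]$. Iterating, we get $\tunit\{i\}\cong \tunit\{i+kd\}[k]$ for all $k\in\Z$. Every pure shift $\tunit\{j\}$ is therefore a suspension of some $\tunit\{i\}$ with $0\le i\le d-1$, obtained by writing $j=i+kd$. Next, twisted suspensions $\tunit(a)[b]$ with $b\neq 2a$ must be handled. I would argue that $\cat{M}^c_m$ is the thick closure of the images of $\unit(a)[b]$. This holds because $\cat{M}_m$ is a smashing localisation of $\cat{L}_m$, and its compacts are the thick closure of the localised compact generators of $\cat{L}_m$. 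Those generators are twists of the unit, and in a Tate-type category only pure shifts together with suspensions arise. Concretely, $\unit(a)[b]=\unit\{a\}[b-2a]$, so $\tunit(a)[b]$ is a suspension of $\tunit\{a\}$, which is a suspension of some element of $H$. Hence the thick closure of $H$ is all of $\cat{M}^c_m$.

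\textbf{Negativity.} This is the main step, and it comes down to the support of $\mathbb{E}\mathrm{nd}(\tunit)$ described in Remark \ref{remark: boundedness of bigraded endomorphism ring in Mcm}. We must show that
\[\Hom(\tunit\{i\},\tunit\{i'\}[n]) = \Hom(\tunit, \tunit(i'-i)[2(i'-i)+n])=0\]
for $0\le i,i'\le d-1$ and $n\ge 1$. Put $x=i'-i$, so that $|x|\le d-1$, and $y=2x+n$. The ring is supported on or below the line $y=(2+1/d)x$, so a nonzero class requires $2x+n\le 2x+x/d$, that is, $n\le x/d<1$. This contradicts $n\ge 1$. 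The window $0\le i\le d-1$ is chosen precisely so that $|x/d|<1$.

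I also want to check the converse bound, which shows that $H$ is reasonable although negativity does not require it. The lower line $y\ge (2+1/d)x-K$ only rules out very negative $n$.

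I expect the main obstacle to be justifying the support claim exactly on the boundary line, including that nothing lies strictly above $y=(2+1/d)x$ after inverting $r_{m-1}$. This needs care. Elements of $S^\alpha_*$ sit on the diagonal $y=x$ with $x\ge0$, and these lie below the line. The classes $r_i$ for $i<m-1$ lie below the Chow line, and $r_{m-1}^{-1}$ lies on the line. A monomial $s\, r_0^{\epsilon_0}\cdots r_{m-2}^{\epsilon_{m-2}} r_{m-1}^{k}$ with $k\in\Z$ has $y-(2+1/d)x$ equal to a sum of nonpositive contributions. For $r_i$ with $i<m-1$, that contribution is $-1+d_i/d<0$; for $s$ of degree $x=y=l\ge 0$, it is $-(1+1/d)l\le0$. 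I would verify these bi-degree computations explicitly to complete the argument.
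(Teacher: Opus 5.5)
Your argument follows the paper's route. You apply Theorem \ref{thereom: bondarko2010weight}, get generation from the $(d)[2d+1]$-periodicity $\tunit\cong\tunit\{-d\}[-1]$ coming from $r_{m-1}^{-1}$, and get negativity because $\mathbb{E}\mathrm{nd}(\tunit)$ lies on or below $y=(2+1/d)x$ while $(x,2x+n)$ with $|x|\le d-1$, $n\ge 1$ lies strictly above it. Your monomial check of that support claim and your remark that $\cat{M}^c_m$ is idempotent-complete are both correct, and the paper leaves both implicit.

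One step is missing, however. Theorem \ref{thereom: bondarko2010weight} identifies the heart only with the \emph{idempotent completion} of the additive hull of $H$. So the stated description of $\cat{H}$ does not come ``directly'' from it. You still have to show that finite sums of the $\tunit\{i\}$, $0\le i\le d-1$, are closed under retracts in $\cat{M}^c_m$. The next lemma relies on this when it writes each $C_k$ as such a sum. The paper closes this step by noting that $\cat{H}$ is $\F_2$-linear, Hom-finite and Krull--Schmidt.

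Your own bookkeeping gives this at once. Take a monomial $s\,r_0^{\epsilon_0}\cdots r_{m-2}^{\epsilon_{m-2}}r_{m-1}^{k}$ with $s\in S^\alpha_l$. It has pure bi-degree $(n,2n)$ only if $k=-(l+\sum_i\epsilon_i)$, and then $n=l(d+1)+\sum_i\epsilon_i(d-d_i)$. Hence $|n|\le d-1$ forces $l=0$. It follows that the groups $\Hom(\tunit\{i\},\tunit\{j\})$ are finite-dimensional and that $\mathrm{End}(\tunit\{i\})\cong\F_2$ is local. Since idempotents split in $\cat{M}^c_m$, Krull--Schmidt then shows that every retract of a finite sum of the $\tunit\{i\}$ is again such a sum.
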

\begin{proof}
From the computation of the bi-graded endomorphism ring of the unit, observe that 
\[\Hom(\tunit, \tunit\{n\})=0 \quad \text{for } -(d-1) \leq n < 0,\]
since the points $(n,2n)$ lie above the line $y=(2+1/d)x$ for $n$ in the above range, hence the corresponding groups vanish by Remark \ref{remark: boundedness of bigraded endomorphism ring in Mcm}. For $0\le n\le d-1$, the only possible non-zero groups $\Hom(\tunit,\tunit\{n\})$ occur in the bi-degrees obtained from the generators $r_i$, for $1\leq i\leq m-2$, after translating along the $(d)[2d+1]$-periodicity coming from $r_{m-1}^{-1}$. More explicitly,
\[\Hom(\tunit,\tunit\{n\})\cong
\begin{cases}
\F_2 & \text{if } n=0 \text{ or } n=d-d_i \text{ for some } i=1,\dots,m-2,\\
0 & \text{otherwise}.
\end{cases}\]
From the above we deduce that $\cat{H}$ is $\F_2$-linear, Hom-finite, and Krull-Schmidt, hence idempotent-complete \cite[Theorem 6.1]{shah2023krull}. Moreover, the $(d)[2d+1]$-periodicity of $\cat{M}_m$ implies that the thick closure of $\cat{H}$ is $\cat{M}^c_m$; in particular, by Theorem \ref{thereom: bondarko2010weight}, it suffices to verify the negativity condition. This reduces to checking that
\[\Hom(\tunit, \tunit(a)[2a+n])=0, \quad \text{for } -(d-1) \leq a \leq d-1, \text{ and } n \geq 1 \]
but this follows immediately from the fact that the corresponding points $(a,2a+n)$ lie above the line $y=(2+1/d)x$, hence the corresponding groups are zero by Remark \ref{remark: boundedness of bigraded endomorphism ring in Mcm}.
\end{proof}
We will call the weight structure defined on $\cat{M}^c_m$ the \emph{periodic Chow weight structure}. Observe that, in contrast to the usual Chow weight structure, the heart $\cat{H}$ is neither a tensor nor semisimple. The former fact is clear: for instance, $\tunit\{n\} \notin \cat{H}$ for $n \geq d$. Furthermore, $\cat{H}$ is not semisimple since--as we have seen in the proof of the previous lemma--there are non-trivial $\otimes$-nilpotent morphisms between the indecomposable generators.

We will use this weight structure to show that $\cat{M}^c_m$ contains enough $\otimes$-nilpotent morphisms: this will allow us to show that the tt-ideal
generated by any non-zero object in $\cat{M}^c_m$ contains the unit. We begin by proving the following technical lemma.
\begin{lemma}
\emph{For any $A \in \cat{M}^c_m$, there exists $i \in \{0, \dots, d-1\}$ such that $A(-i)[-2i]$ admits a weight complex supported on a segment $[s,t]$ where the last differential $\delta_{s+1}$ is zero.}
\end{lemma}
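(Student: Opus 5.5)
The plan is to exploit the periodicity $\tunit\{d\}\cong\tunit[-1]$ coming from the invertibility of $r_{m-1}$: twisting by $\{-i\}$ then permutes the indecomposables of the heart $\cat{H}$ and moves some of them into the adjacent weight. I will choose $i$ so that, after the twist, the bottom term of the weight complex is isotypic of type $\tunit$. The vanishing $\Hom(\tunit\{c\},\tunit)=\Hom(\tunit,\tunit\{-c\})=0$ for $1\le c\le d-1$, established in the proof of Lemma \ref{lemma: weight structure on Mcm}, should then kill the last differential.

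\textbf{Step 1: a minimal weight complex for $A$.} By Lemma \ref{lemma: weight structure on Mcm} the weight structure is bounded, so $A$ has a weight complex $t(A)$ supported on some $[s,t]$. Each term $C_w$ is a finite sum of $\tunit\{a\}$ with $0\le a\le d-1$, and these are exactly the indecomposables of $\cat{H}$ (Krull--Schmidt, as in that proof). Since $\End(\tunit\{a\})=\F_2$, any component of a differential that is an isomorphism $\tunit\{a\}\to\tunit\{a\}$ can be removed by Gaussian elimination. Concretely, one modifies the weight Postnikov tower by cancelling a contractible summand $\tunit\{a\}[w]\xrightarrow{\sim}\tunit\{a\}[w]$. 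After finitely many such steps, every component $\tunit\{a\}\to\tunit\{a\}$ of every differential is zero.

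\textbf{Step 2: choice of $i$ and the regrading.} Let $i$ be the largest index $a$ such that $\tunit\{a\}$ occurs in the bottom term $C_s$. Twisting the Postnikov tower of $A$ by $\{-i\}$ gives a tower for $A(-i)[-2i]$ whose pieces are of two kinds. A piece $\tunit\{a\}[w]$ with $a\ge i$ becomes $\tunit\{a-i\}[w]$, which stays in weight $w$. A piece with $a<i$ becomes $\tunit\{a-i+d\}[w+1]$ by periodicity, which moves up to weight $w+1$. In particular the new bottom weight is still $s$, and its term is exactly $\tunit^{\oplus r}$, coming from the copies of $\tunit\{i\}$ in $C_s$. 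No piece ever moves down in weight.

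\textbf{Step 3: the last differential vanishes.} The new $C'_{s+1}$ is a sum of objects $\tunit\{c\}$ with $0\le c\le d-1$. Components $\tunit\{c\}\to\tunit$ with $c\ge1$ vanish by the Hom computation recalled above. Components $\tunit\to\tunit$ are scalars. Each such component either comes from an old differential component $\tunit\{i\}\to\tunit\{i\}$, which is zero by minimality in Step 1, or from a piece of weight $s$ in the old complex that has moved up. In the second case I must check that the regrouping does not create a new isomorphism component. If it does, I cancel it again by Gaussian elimination. This does not change the bottom term's isotypic type and strictly shortens the complex, so the process terminates, and I obtain $\delta'_{s+1}=0$.

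\textbf{Main obstacle.} The delicate point is Step 2. After twisting, the pieces of the Postnikov tower are no longer arranged in monotone weight order, so I must show that they can be reordered into a genuine weight filtration of $A(-i)[-2i]$. My first approach is to use the orthogonality axiom together with the octahedral axiom to commute adjacent pieces that sit in the wrong order. The obstruction to each such swap is a map $\tunit\{b\}[w]\to\tunit\{b'\}[w'+1]$ with $w'<w$, and this should vanish by the negativity established in Lemma \ref{lemma: weight structure on Mcm}. I also need to verify that the differentials of the resulting complex are the twisted old ones, up to the periodicity isomorphism, so that Step 3 applies.
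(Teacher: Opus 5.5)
Your proposal is correct and is essentially the paper's argument. It uses the same ingredients: the choice of $i$ as the largest twist occurring in $C_s$; the periodicity $\tunit\{-d\}\cong\tunit[1]$, which makes the twisted bottom term $\tunit^{\oplus r}$; the vanishing $\Hom(\tunit\{c\},\tunit)=0$ for $1\le c\le d-1$; and Gaussian elimination with splitting-off of contractible summands.

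The only real difference is the order of operations. The paper twists first and then eliminates only the $\tunit\to\tunit$ block of $\delta'_{s+1}$, whereas you make the whole complex minimal before twisting. Your order is slightly more robust. In the paper's order the cancelled summand $W$ may be all of $C'_s$; the bottom term then moves up to weight $s+1$ and the procedure has to be repeated. In yours the bottom $\tunit^{\oplus r}$ survives and $\delta'_{s+1}$ vanishes outright.

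Two small corrections.

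First, the second alternative in your Step 3 never arises. A piece $\tunit\{a\}[s]$ with $a<i$ becomes $\tunit\{a-i+d\}[s+1]$ with $a-i+d\ge d-i\ge 1$, so it contributes no $\tunit\to\tunit$ component. Indeed, it is a direct summand of the twisted bottom stage, so the boundary map vanishes on it. This matters, because your fallback would not preserve an isotypic bottom term if it cancelled all of $\tunit^{\oplus r}$.

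Second, the obstruction in your ``main obstacle'' is oriented the wrong way. With the paper's conventions (lower weights are subobjects), consider an extension $P\to E\to Q$ with $P$ of weight $w$ and $Q$ of weight $w'<w$. Its class lies in $\Hom(Q,P[1])$, from weight $w'$ to weight $w+1$, and this vanishes by orthogonality. The map you wrote, from weight $w$ to weight $w'+1\le w$, need not vanish.

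In fact no swapping is needed at all. Let $G_w$ be the twisted $w$-th stage of the original tower. Then $\cone(G_{w-1}\to G_w)$ splits as $P_w\oplus P'_w$, with $P_w$ of weight $w$ and $P'_w$ of weight $w+1$. Set $F'_w:=\mathrm{fib}(G_w\to P'_w)$. The cone of $F'_{w-1}\to F'_w$ is then an extension of $P_w$ by $P'_{w-1}$, both of weight $w$, and it splits by negativity. This gives the required weight tower on $[s,t+1]$, whose bottom differential is the twisted old one projected onto the $\tunit\{i\}$-part, as your Step 3 needs.
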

\begin{proof}
We may assume $A$ is non-zero. Since the periodic Chow weight structure is bounded, $A$ admits a weight complex $t(A)$ supported on some segment $[s,t]$:
\[\begin{tikzcd}[ampersand replacement=\&]
	{t(A):= \quad 0} \& {C_{t}} \& {C_{t-1}} \& \dots \& {C_{s+1}} \& {C_s} \& 0
	\arrow[from=1-1, to=1-2]
	\arrow["\delta_t", from=1-2, to=1-3]
	\arrow["", from=1-3, to=1-4]
	\arrow["", from=1-4, to=1-5]
	\arrow["\delta_{s+1}", from=1-5, to=1-6]
	\arrow[from=1-6, to=1-7]
\end{tikzcd}\]
where $C_s$ is non-zero, and each $C_k$, for $ s \leq k\leq t$ is a finite direct sum of objects $\tunit\{i\}$ with $i \in \{0,\dots, d-1\}$. First, if the last differential $\delta_{s+1}=0$, there is nothing to prove, so we may assume it is non-zero and proceed as follows. Consider the  shifted object $A' := A\{-i_{M}\}$ 
where $i_M$ is the largest index $i \in \{0, \dots,  d-1\}$ such that $\tunit\{i\}$ is a summand of $C_s$. From the $(d)[2d+1]$-periodicity, we observe that the lowest graded component of $A'$ lives in degree $s$ and it consists of a finite direct sum of copies of $\tunit$ only. This ensures that we may find a weight complex for $A'$ supported on $[s, t']$ where the lowest term is of the form 
\[C'_s = \bigoplus_a \tunit.\] 
If the new differential $\delta'_{s+1} = 0$, we are done. If not, recall from the proof of Lemma \ref{lemma: weight structure on Mcm} that 
\[\Hom(\tunit, \tunit\{j\})=0 \quad \text{for } -(d-1) \leq j < 0,\]
and hence, by duality,
\[\Hom(\tunit\{i\}, \tunit) = 0, \text{ for } 0 < i \leq d-1 .\] 
Thus, non-zero components of $\delta'_{s+1}: C'_{s+1} \to C'_s$ only arise from summands of $\tunit$ in $C'_{s+1}$. We may decompose 
\[C'_{s+1} = (\bigoplus_b \tunit) \oplus B,\] 
where $B$ does not contain any $\tunit$ as summands. The differential $\delta'_{s+1}$ is then represented by a matrix which vanishes on $B$, and acts as a matrix over $\mathrm{End}(\tunit) \cong \F_2$ on the summand $\bigoplus_b \tunit$. By Gaussian elimination, we may find automorphisms of the source and target to diagonalise this matrix as follows.
\[\left[ \begin{matrix} I_r & 0 \\ 0 & 0 \end{matrix} \right]\] 
This means that $\delta'_{s+1}$ can be identified with a morphism $1_W \oplus 0: C'_{s+1} \to C'_{s}$, where $W$ is a direct sum of some copies of $\tunit$. Arguing as in \cite[Proof of Prop 5.10]{vishik2024isotropic}, we may split off $W$ from the weight filtration, leaving a weight complex for $A'$ with a zero differential at position $s+1$.
\end{proof}

\begin{prop}\label{prop: case m geq 3 open of cone(r_m-1)}
\emph{For any non-zero object $B \in \cat{M}^c_m$, we have $\langle B \rangle =  \cat{M}^c_m$.}
\end{prop}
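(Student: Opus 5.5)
Since $\langle B\rangle=\langle B(-i)[-2i]\rangle$ for every twist, the technical lemma lets us replace $B$ by a twist admitting a weight complex supported on $[s,t]$ whose last differential $\delta_{s+1}$ vanishes. Arguing as in Proposition \ref{prop: case m=2 support of cone(r_0)} and \cite[Proof of Prop 5.10]{vishik2024isotropic}, this yields a distinguished triangle
\[B_{>s}[-1]\xrightarrow{f} B_s \to B \to B_{>s},\]
where $B_s=C_s[s]$ is a non-empty finite sum of $\tunit\{i\}[s]$ with $0\le i\le d-1$. After a further twist (the argument in the technical lemma lets us take $C_s=\bigoplus\tunit$), we may assume $\tunit[s]$ is a direct summand of $B_s$. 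Because $\delta_{s+1}=0$, the map $f$ factors through an object of $(\cat{M}^c_m)^{\geq s+1}$.

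The standard tt-argument then applies. If $f$ is $\otimes$-nilpotent, say $f^{\otimes n}=0$, then $B_s^{\otimes n}\in\langle B\rangle$. This object contains $\tunit[ns]$ as a summand, so $\tunit\in\langle B\rangle$. The reason is the usual lemma: for a triangle $X\xrightarrow{f}Y\to Z$ with $f^{\otimes n}=0$, we have $Y^{\otimes n}\in\langle Z\rangle$. After dualising and shifting, it suffices to prove the following statement.

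\emph{Claim.} For any $Y\in(\cat{M}^c_m)^{\geq 1}$, every morphism $g\colon Y\to\tunit$ is $\otimes$-nilpotent (on $\tunit$ itself).

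For this, it is enough to show that $Y^{\otimes n}$ admits no non-zero maps to $\tunit$ for $n\gg0$, i.e.\ $\Hom(Y^{\otimes n},\tunit)=0$. Now $Y$ is a finite extension of objects $\tunit(a)[2a+w]$ with $0\le a\le d-1$ and $w\ge1$. Hence $Y^{\otimes n}$ is a finite extension of objects $\tunit(a)[2a+w]$ where $a$ and $w$ satisfy $w\ge n$ and $0\le a\le n(d-1)$.

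The main obstacle is this vanishing step, because the heart is not a tensor subcategory: $Y^{\otimes n}$ does not lie in weights $\geq n$ for the periodic weight structure. We therefore work directly with bi-degrees. We have $\Hom(\tunit(a)[2a+w],\tunit)=\mathbb{E}\mathrm{nd}(\tunit)$ in bi-degree $(-a,-2a-w)$. By Remark \ref{remark: boundedness of bigraded endomorphism ring in Mcm}, this group vanishes unless
\[-2a-w\ \ge\ -(2+1/d)a-K,\]
that is, unless $w\le a/d+K$.

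The constraint $a\le n(d-1)$ gives $a/d+K\le n(d-1)/d+K$, while $w\ge n$. For $n>dK$ we have $n>n(d-1)/d+K$, so the inequality fails. Here I must check that the weight growth is controlled by $n$ while the twist grows at most like $n(d-1)$: each tensor factor contributes at least $1$ to $w$ and at most $d-1$ to $a$, so $w-a/d\ge n(1-(d-1)/d)=n/d$. This is unbounded in $n$, and hence eventually exceeds $K$.

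Therefore $\Hom(Y^{\otimes n},\tunit)=0$ for large $n$, so $g^{\otimes n}=0$. This gives the Claim, and hence $\tunit\in\langle B\rangle$.
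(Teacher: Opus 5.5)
Your proposal is correct and follows the paper's proof: the same reduction via the technical lemma to a weight triangle with $\delta_{s+1}=0$, the same reduction to $\otimes$-nilpotence of the connecting map, and the same slope estimate against the strip $(2+1/d)x-K\le y\le (2+1/d)x$, with $n>Kd$ giving the margin $n/d>K$. The only difference is cosmetic: you show $\Hom(Y^{\otimes n},\tunit)=0$ directly, whereas the paper passes to the adjoint $\bar g\colon \tunit\to E$ and shows $\Hom(\tunit,E^{\otimes n})=0$.
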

\begin{proof}
By the previous lemma, there exists a shift $A = B\{-i\}$ admitting a weight complex supported on a segment $[s,t]$ where the differential $\delta_{s+1}$ is zero, and the lowest non-zero graded component $A_s$ is a direct sum of copies of $\tunit[s]$. Consider the distinguished triangle
\[\begin{tikzcd}[ampersand replacement=\&, sep=small]
	{A_{>s}[-1]} \& {A_s} \& A \& {A_{>s}}
	\arrow["f",from=1-1, to=1-2]
	\arrow[from=1-2, to=1-3]
	\arrow[from=1-3, to=1-4]
\end{tikzcd}\]
and observe that to prove the proposition, it suffices to show that $f$ is $\otimes$-nilpotent. Since $\delta_{s+1}=0$, then $f$ factors through a morphism $g: A_{>s+1}[-1] \to A_s$. By rigidity, the $\otimes$-nilpotence of $g$ is equivalent to the $\otimes$-nilpotence of its adjoint 
\[\bar{g}: \tunit \to (A_{>s+1})^\vee[1] \otimes A_s=:E.\]
Since $A_{>s+1}$ is an extension of finitely many $\tunit(a)[b]$ with $0\leq a\leq d-1$ and $b-2a \geq s+2$, then $E$ is an extension of finitely many $\tunit(a)[b]$ with $-(d-1)\leq a \leq 0$ and $b-2a \leq -1$. In particular, this means that for any positive integer $n$, $E^{\otimes n}$ is an extension of finitely many $\tunit(x)[y]$ such that $(x,y)$ belong to the following region of $\Z^2$
\[L_n:\, -n(d-1)\leq x \leq 0, \quad y-2x \leq -n.\]
Since $\mathbb{E}\mathrm{nd}(\tunit)$ is concentrated in a strip bounded above by the line $l_1: y=(2+1/d)x$ and below by the line $l_2: y=(2+1/d)x-K$; then it is sufficient to show for sufficiently large $n$, the region $L_n$ is below the line $l_2$. This is because $E^{\otimes n}$ is an extension of $\tunit(x)[y]$ for $(x,y) \in L_n$, hence the above would imply that any morphism $\tunit \to  E^{\otimes n}$ is forced to be zero. We may choose $n > Kd$ and check that for $-n(d-1)\leq x \leq 0$, 
\[y-(2+1/d)x \leq y-2x + n(d-1)/d \leq -n + n(d-1)/d\leq -n/d < -K.\]
\end{proof}
\subsection{Isotropic case}
It remains to compute $\Spc(\cat{K}^c_{\infty})$. We begin by recording the computation of the bi-graded endomorphism ring of its unit. 
\begin{lemma}\label{lemma: bigraded endo ring of upsilon}
\emph{The bi-graded endomorphism ring of the unit $\mathbb{E}\mathrm{nd}(\Upsilon^{iso})$ in $\cat{K}_{\infty}$ is isomorphic to }
\[\F_2\big[\rho,\{r_i\}_{i \in \N_0} \big] \big /(\{r_{i}^2-r_{i+1}\rho\}_{i \in \N_0}).\]
\end{lemma}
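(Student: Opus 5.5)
The plan is to compute $\mathbb{E}\mathrm{nd}(\Upsilon^{iso})$ as a sequential colimit of the rings $\mathbb{E}\mathrm{nd}(\tilde{\mathcal{X}}_n)$ over $F$, using \cite[Theorem 3.5]{vishik2022isotropic}. Since $\Upsilon^{iso} \simeq \colim_n \tilde{\mathcal{X}}_n$ and $\Upsilon^{iso}$ is idempotent, I first use the idempotence to write
\[\Hom(\Upsilon^{iso}, \Upsilon^{iso}(a)[b]) \cong \Hom(\T, \Upsilon^{iso}(a)[b]).\]
Because $\T$ is compact in $\DM(F;\F_2)$, this equals $\colim_n \Hom(\T, \tilde{\mathcal{X}}_n(a)[b])$. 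Moreover, $\Hom(\T, \tilde{\mathcal{X}}_n(a)[b]) \cong \Hom(\tilde{\mathcal{X}}_n,\tilde{\mathcal{X}}_n(a)[b])$, again by idempotence of $\tilde{\mathcal{X}}_n$. So the lemma reduces to identifying the colimit of the bigraded rings
\[\mathbb{E}\mathrm{nd}(\tilde{\mathcal{X}}_n)=S^{\rho^n}_*[r_0,\dots,r_{n-1}]/(r_i^2-r_{i+1}\rho)_{0\le i\le n-2}\]
along the transition maps induced by $\tilde{\mathcal{X}}_n \to \tilde{\mathcal{X}}_{n+1}$. These transition maps are ring maps, since they come from the morphisms of idempotents $\T \to \tilde{\mathcal{X}}_n \to \tilde{\mathcal{X}}_{n+1}$.

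Over a real closed field, $\K^M_*(F)/2=\F_2[\rho]$ and $\ker(\rho^n\cdot)=(\rho^{n+1})$, so $S^{\rho^n}_*=\F_2[\rho]/(\rho^{n+1})$. The next step is to determine the transition maps on generators. I would show that $\rho \mapsto \rho$, via compatibility with the map from $\K^M_*(F)/2 = \mathbb{E}\mathrm{nd}(\T)$ restricted to the diagonal, and that $r_i \mapsto r_i$ for $i \le n-1$. For the latter I would invoke Vishik's construction of the $r_i$: they are defined via the Milnor operations $Q_i$ acting on the motivic cohomology of $\tilde{\mathcal{X}}_\alpha$, and that construction is natural in the morphism $\tilde{\mathcal{X}}_{\rho^n}\to\tilde{\mathcal{X}}_{\rho^{n+1}}$. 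Granting this, each term maps into the next compatibly, and in a fixed bidegree the relations $\rho^{n+1}=0$ disappear in the colimit. The relation $r_{n-1}^2=r_n\rho$ appears once $r_n$ is adjoined.

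With the transition maps in hand, I would conclude by a bidegree-wise count. In each fixed bidegree $(a)[b]$, the stage-$n$ group stabilises for $n$ large, because only finitely many monomials $\rho^k r_0^{e_0}\cdots$ can occupy a given bidegree: each $r_i$ strictly lowers $b-2a$, and $\rho$ raises both coordinates. The stable group matches the corresponding graded piece of $\F_2[\rho,\{r_i\}_{i \in \N_0}]/(r_i^2-r_{i+1}\rho)$, using a normal-form basis $\rho^k\prod r_i^{\epsilon_i}$ with $\epsilon_i\in\{0,1\}$ for $i<n-1$.

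The main obstacle is the identification of the transition maps $r_i\mapsto r_i$ and their injectivity in the stable range. Vishik's presentation is abstract, and one must check that the generator $r_i$ of $\tilde{\mathcal{X}}_{\rho^n}$ is sent to $r_i$ and not to $r_i$ plus decomposables; a change of generators of that kind would not affect the final presentation anyway. I would handle this by checking compatibility after applying the cohomology functor $\Hom(-,\T)$ and using the description via $Q_i$, which is natural.
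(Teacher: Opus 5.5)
Your reduction is the same as the paper's. Idempotence of $\Upsilon^{iso}$ and of $\tilde{\mathcal{X}}_n$, together with compactness of $\T$, gives $\mathbb{E}\mathrm{nd}(\Upsilon^{iso})\cong\colim_n\mathbb{E}\mathrm{nd}(\tilde{\mathcal{X}}_n)$, and the transition maps are identified as $\rho\mapsto\rho$, $r_i\mapsto r_i$, following the argument of Vishik's Theorem 3.7. The gap is in the one input specific to real closed fields, and you get it wrong. Since $\K^M_*(F)/2=\F_2[\rho]$ is a polynomial ring, multiplication by $\rho^n$ is injective, so $\ker(\rho^n\cdot)=0$ and not $(\rho^{n+1})$; for instance $\rho^n\cdot\rho^{n+1}=\rho^{2n+1}\neq 0$. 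Hence $S^{\rho^n}_*=\F_2[\rho]$ and
$\mathbb{E}\mathrm{nd}(\tilde{\mathcal{X}}_n)=\F_2[\rho,r_0,\dots,r_{n-1}]/(r_i^2-r_{i+1}\rho)_{0\le i\le n-2}$.
This injectivity is exactly the fact the paper's proof invokes.

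The error is not a harmless truncation that "disappears in the colimit".

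\begin{itemize}
\item With your stage rings there is no ring map $\F_2[\rho]/(\rho^{n+1})\to\F_2[\rho]/(\rho^{n+2})$ sending $\rho\mapsto\rho$, so the transition maps you describe are not well defined.
\item You yourself obtain $\rho\mapsto\rho$ from compatibility with $\K^M_*(F)/2\to\mathbb{E}\mathrm{nd}(\T)\to\mathbb{E}\mathrm{nd}(\tilde{\mathcal{X}}_n)\to\mathbb{E}\mathrm{nd}(\Upsilon^{iso})$. So if $\rho^{n+1}$ vanished at stage $n$, it would vanish at every later stage and in the colimit.
\item That contradicts the statement you are proving, since $\rho$ is not nilpotent in $\F_2[\rho,\{r_i\}]/(r_i^2-r_{i+1}\rho)$. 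Zero stays zero under the transition maps, so a truncation can never go away in a sequential colimit.
\end{itemize}

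With $S^{\rho^n}_*=\F_2[\rho]$ your argument goes through, with no truncations to discard. The maps $\rho\mapsto\rho$, $r_i\mapsto r_i$ are well defined. They are in fact injective: the normal-form basis $\rho^k r_{n-1}^e\prod_{i<n-1}r_i^{\epsilon_i}$ of stage $n$ is carried injectively onto normal-form basis elements of stage $n+1$, via $r_{n-1}^{2q+\epsilon}=\rho^q r_n^q r_{n-1}^{\epsilon}$. Your bidegree-wise stabilisation then yields the stated ring.

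One small correction to that count: $\rho$, of bidegree $(1)[1]$, also lowers $b-2a$ by one, as every generator does. Finiteness of monomials in a fixed bidegree comes from combining $2a-b=k+\sum\epsilon_i$ with $a=k-\sum\epsilon_i d_i$.
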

\begin{proof}
Recall from Section \ref{section: real isotropic category of quadrics} that $\Upsilon^{iso} \cong \hocolim_n(\tilde{\mathcal{X}_n})$, in particular
\[\mathbb{E}\mathrm{nd}(\Upsilon^{iso}) \cong \colim_n \mathbb{E}\mathrm{nd}(\tilde{\mathcal{X}}_{n}).\]
The rest of the proof follows from the same argument in the proof of \cite[Theorem 3.7]{vishik2022isotropic} combined with the fact that when $F$ is real closed, multiplication by $\rho$ is injective in $\K^M_*(F)/2$ and 
\[\mathbb{E}\mathrm{nd}(\tilde{\mathcal{X}}_{n})=\F_2[ \rho, r_0, \dots, r_{n-1}]\big /(\{r_{i}^2-r_{i+1}\rho\}_{0 \leq i \leq n-2}).\]
\end{proof}
We begin by defining a weight structure on $\cat{K}_{\infty}^c$ that will allow us to deduce that this tt-category is local.
\begin{lemma}\label{lemma: weight structure on isotropic cat and locality}
\emph{The category $\cat{K}^c_\infty$ affords a unique bounded Chow weight structure whose heart $\cat{H}$  is the additive subcategory generated by the pure shifts of the unit,
\[\Upsilon^{iso}(i)[2i], \quad i \in \Z.\]
In particular, $\cat{K}^c_\infty$ is a local tt-category.  }
\end{lemma}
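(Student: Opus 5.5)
The proof will follow the template of Lemma \ref{lemma: chow weight structure and locality of cellular tt-categories}, with Lemma \ref{lemma: bigraded endo ring of upsilon} supplying the needed input. I would first locate $\mathbb{E}\mathrm{nd}(\Upsilon^{iso})$ in the lattice $\Z^2$. The class $\rho$ has bi-degree $(1)[1]$, so it lies strictly below the Chow line $y=2x$; indeed $1<2$ in the region $x>0$, and every power $\rho^k$ lies on the diagonal $y=x$. Each $r_i$ has bi-degree $(-d_i)[-2d_i-1]$, which is strictly below $y=2x$. The relations $r_i^2=r_{i+1}\rho$ are homogeneous, so every monomial $\rho^a\prod r_i^{e_i}$ sits at a point $(x,y)$ with $y-2x=-a-\sum e_i\le 0$. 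Equality holds only for the monomial $1$, which sits at $(0,0)$. Consequently the only homogeneous elements on the Chow line are the scalars in degree $(0)[0]$, and $\mathbb{E}\mathrm{nd}(\Upsilon^{iso})$ is concentrated in the half plane $y-2x\le 0$.

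Next I would apply Theorem \ref{thereom: bondarko2010weight} to $H=\{\Upsilon^{iso}(i)[2i]\}_{i\in\Z}$. We need $\cat{K}^c_\infty$ to be idempotent-complete. This holds because it is the category of compact objects of the compactly generated category $\cat{K}_\infty$, which is a smashing localisation generated by the twists of $\Upsilon^{iso}$, as recorded in the Remark following its definition. Generation also holds: $\cat{K}^c_\infty$ is the thick closure of the twists $\Upsilon^{iso}(i)[j]$, and each of these is a shift of some $\Upsilon^{iso}\{i\}$. For negativity, observe that $\Hom(\Upsilon^{iso}\{i\},\Upsilon^{iso}\{j\}[n])$ is the bi-degree $(j-i)[2(j-i)+n]$ component of $\mathbb{E}\mathrm{nd}(\Upsilon^{iso})$. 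This point satisfies $y-2x=n\ge 1$, so it lies strictly above the Chow line and the group vanishes by the previous paragraph. The same computation with $n=0$ gives $\Hom(\Upsilon^{iso}\{i\},\Upsilon^{iso}\{j\})=\F_2$ if $i=j$ and $0$ otherwise. The additive category generated by $H$ is therefore semisimple, and in particular idempotent-complete, so it is exactly the heart. Boundedness follows because the generators lie in the heart, shifted.

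For locality, I would observe that the heart contains the unit and is closed under $\otimes$, since $\Upsilon^{iso}\{i\}\otimes\Upsilon^{iso}\{j\}\cong\Upsilon^{iso}\{i+j\}$. The category has a stable symmetric monoidal $\infty$-enhancement, obtained as the compact part of the smashing localisation $\Upsilon^{iso}\otimes\mathcal{DQM}$. The weight complex functor $t\colon\cat{K}^c_\infty\to\K^b(\cat{H})$ is therefore a conservative tt-functor. Because $\cat{H}$ is semisimple, Remark \ref{remark: weight complex functors and local tt-categories} shows that $\cat{K}^c_\infty$ is local.

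The main obstacle is the first step, namely that no element of $\mathbb{E}\mathrm{nd}(\Upsilon^{iso})$ other than the scalars lies on or above $y=2x$. This rests entirely on Lemma \ref{lemma: bigraded endo ring of upsilon}, and in particular on the passage to the colimit over $n$. It also needs the homogeneity of the relations, which guarantees that monomials of negative Chow slope cannot cancel to produce new classes on the Chow line.
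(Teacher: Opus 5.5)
Your proposal is correct and follows essentially the same route as the paper. Both apply Theorem~\ref{thereom: bondarko2010weight} to the pure shifts $\Upsilon^{iso}\{i\}$, read off negativity and semisimplicity of the heart from where $\mathbb{E}\mathrm{nd}(\Upsilon^{iso})$ sits in $\Z^2$ (Lemma~\ref{lemma: bigraded endo ring of upsilon}), and deduce locality from the conservative weight complex functor into $\K^b(\cat{H})$ with $\cat{H}$ tensor and semisimple. Your explicit bookkeeping, that every monomial $\rho^a\prod r_i^{e_i}$ has $y-2x=-a-\sum e_i$, together with your check that $\cat{K}^c_\infty$ is idempotent-complete, just spells out what the paper leaves to ``the same argument'' as in Lemma~\ref{lemma: chow weight structure and locality of cellular tt-categories}.
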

\begin{proof}
This follows from the same argument in the proof of Lemma \ref{lemma: chow weight structure and locality of cellular tt-categories}, together with the fact that the computation of the bi-graded endomorphism ring allows us to verify that 
\[\Hom(\Upsilon^{iso}, \Upsilon^{iso}(j)[2j+n])=0, \text{ for } j \in \Z \text{ and } n \geq 1.\]
The second claim follows from Remark \ref{remark: weight complex functors and local tt-categories} after observing that $\cat{H}$ is tensor and semisimple, and that $\cat{K}^c_\infty$ admits a stable symmetric monoidal $\infty$-categorical enhancement.
\end{proof}
\begin{thm}\label{thm: spectrum of isotropic case}
\emph{The Balmer spectrum of $\cat{K}^c_\infty$ consists of the following three-point space,}
\[\begin{tikzcd}[ampersand replacement=\&]
	0 \\
	{\langle \cone (\rho)^{iso} \rangle} \\
	{\langle \cone (r_0)^{iso} \rangle}
	\arrow[no head, from=1-1, to=2-1]
	\arrow[no head, from=2-1, to=3-1]
\end{tikzcd}\]
\emph{where the closed subsets are the specialisation-closed subsets.}
\end{thm}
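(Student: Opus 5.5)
The plan follows the pattern of Theorems \ref{thm: case m=1} and \ref{thm: case m geq 3}, now with a two-step chain of localisations. Write $\unit=\Upsilon^{iso}$. By Lemma \ref{lemma: weight structure on isotropic cat and locality}, $0$ is prime. In $\mathbb{E}\mathrm{nd}(\unit)=\F_2[\rho,r_i]/(r_i^2-r_{i+1}\rho)$ every element is non-nilpotent or zero. The candidate primes are therefore $\langle\cone(\rho)\rangle$ and $\langle\cone(r_0)\rangle$, with $\cone(r_0)\in\langle\cone(\rho)\rangle$ expected. This last containment holds because $r_0^{2^k}=r_k\rho^{2^k-1}$, so $\cone(r_0^{2^k})\in\langle\cone(\rho)\rangle$, and $\cone(r_0)\in\langle\cone(r_0^{N})\rangle$ by the octahedral axiom.

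\textbf{Step 1: the closed point.} I would show that for every non-zero $A\in\cat{K}^c_\infty$ we have $\cone(r_0)\in\langle A\rangle$, arguing exactly as in Proposition \ref{prop: case m=2 support of cone(r_0)}. The Chow weight structure has semisimple heart, so there is a weight filtration with zero differentials. It then suffices to show that $\Hom(\cone(r_0)(i)[j],\cone(r_0))=0$ for $j-2i\ge K$. For this I compute $\mathbb{E}\mathrm{nd}(\unit)/(r_0)$. Modulo $r_0$ we get $r_1\rho=0$, $r_i^2=r_{i+1}\rho$, and so on, and one must check that this quotient is supported near the Chow line. This is the main obstacle. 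The elements $\rho^n$ lie on the diagonal $y=x$, which is far from the Chow line in the direction $y-2x=-x\to+\infty$ for negative $x$; this sign needs checking, since $\rho$ has degree $(1)[1]$ and so lies at $y-2x=-1$. Products of the $r_i$ have bi-degree with $y-2x=-(\#\text{factors})$. Hence every monomial satisfies $y-2x\le 0$, and the relevant range $j-2i\ge1$ is empty regardless. I expect the vanishing to follow directly from the fact that everything lies on or below the Chow line, exactly as in the $m=1$ case, after checking that $r_0$ acts injectively (true, since the ring is a domain). This gives $\supp(\cone(r_0))=\{0\}$, hence $\langle\cone(r_0)\rangle$ is the unique closed point and every non-zero prime contains it.

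\textbf{Step 2: after inverting $r_0$.} Form the idempotent $\tilde{\mathcal{D}}_{r_0}$ as before. Then $\mathbb{E}\mathrm{nd}(\tilde{\mathcal{D}}_{r_0})=\mathbb{E}\mathrm{nd}(\unit)[r_0^{-1}]$. In this ring $r_{i+1}=r_i^2\rho^{-1}$ once $\rho$ is invertible, but $\rho$ itself is not inverted here. Instead, since $r_0$ is invertible, the element $r_i\rho^{2^i-1}=r_0^{2^i}$ is a unit. The spectrum of $\tilde{\mathcal{D}}_{r_0}\otimes\cat{K}^c_\infty$ should then be the Sierpiński space $\{0,\langle\cone(\rho)\rangle\}$. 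To see this, I would repeat Step 1 in the localised category, using a periodic Chow-type weight structure adapted to the $(0)[-1]$-periodicity given by $r_0^{-1}$, and then invert $\rho$ as well.

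\textbf{Step 3: after inverting both.} The graded ring $\mathbb{E}\mathrm{nd}(\unit)[r_0^{-1},\rho^{-1}]$ is a graded field generated by $r_0^{\pm1}$ and $\rho^{\pm1}$, since each $r_i$ is expressible in terms of $r_0$ and $\rho$. Hence every compact object is a sum of twisted suspensions of the unit, and the spectrum is a point, exactly as in the end of the proof of Theorem \ref{thm: case m=1}. Combining the three steps, the points are exactly $0\supset\ldots$, namely the chain $\langle\cone(r_0)\rangle\subset\langle\cone(\rho)\rangle$ and $0$, which is the picture in the statement. Since the space is finite, its closed subsets are precisely the specialisation-closed subsets. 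The delicate step is Step 2, namely proving that in the $r_0$-periodic category every non-zero object generates $\cone(\rho)$.
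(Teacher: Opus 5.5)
\textbf{There is a genuine gap: your chain of primes is upside down, and Steps 1 and 2 both fail as a result.}

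\emph{The containment goes the other way.} The factorisation $r_0^{2^k}=r_k\rho^{2^k-1}$ only gives $\cone(r_0^{2^k})\in\langle\cone(r_k),\cone(\rho)\rangle$, not $\langle\cone(\rho)\rangle$. What it does give is $\supp(\cone(r_0))=\supp(\cone(r_0^{2^k}))=\supp(\cone(r_k))\cup\supp(\cone(\rho))\supseteq\supp(\cone(\rho))$, i.e.\ $\cone(\rho)\in\langle\cone(r_0)\rangle$. Your containment is false. The kernel of $\tilde{\mathcal{D}}_\rho\otimes-$ on compact objects is $\langle\cone(\rho)\rangle$. But $\tilde{\mathcal{D}}_\rho\otimes\cone(r_0)\neq 0$, because $r_0$ is not a unit in $\mathbb{E}\mathrm{nd}(\tilde{\mathcal{D}}_\rho)\cong\F_2[\rho^{\pm1}][r_0]$. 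So the chain is $0\subsetneq\langle\cone(\rho)\rangle\subsetneq\langle\cone(r_0)\rangle$. Here $0$ is the unique closed point and $\langle\cone(r_0)\rangle$ is the generic point.

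\emph{Step 1.} It claims something false: take $A=\cone(\rho)$. The error is in your sign check. $\Hom(\unit(i)[j],\unit)$ is the piece of $\mathbb{E}\mathrm{nd}(\unit)$ in bidegree $(-i)[-j]$, which sits at $y-2x=-(j-2i)\leq -1$. That is exactly the region below the Chow line where the ring lives, so the range is not empty. In $\mathbb{E}\mathrm{nd}(\unit)/(r_0)$ the powers $\rho^n$ survive at $y-2x=-n$, so nothing vanishes. Indeed $\rho$ is not $\otimes$-nilpotent on $\cone(r_0)$.

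\emph{Step 2.} Here $\rho$ divides the unit $r_0^2=r_1\rho$, so inverting $r_0$ already inverts $\rho$. The endomorphism ring becomes the graded field $\F_2[\rho^{\pm1},r_0^{\pm1}]$. That localisation therefore has a one-point spectrum, corresponding to $U(\cone(r_0))=\{\langle\cone(r_0)\rangle\}$, not a Sierpi\'nski space.

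\textbf{The fix is to localise in the opposite order, as the paper does.}

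First show $\cone(\rho)\in\langle A\rangle$ for every non-zero $A$, using the Chow weight structure. The quotient $\mathbb{E}\mathrm{nd}(\unit)/(\rho)\cong\Lambda_{\F_2}(r_i : i\geq 0)$ is not confined to any strip along the Chow line. So a half-plane vanishing as in Proposition \ref{prop: case m=2 support of cone(r_0)} is unavailable, and you must also bound the twists. The object $Y^{\otimes n}$ only involves twists with $|i|\leq nK$. A class of the exterior algebra at distance $\geq n$ below the Chow line is a sum of products of at least $n$ distinct $r_i$'s, so its twist has absolute value at least $2^n-n-1$. This is Sparks' argument.

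Then invert $\rho$. The ring $\F_2[\rho^{\pm1}][r_0]$ and the $(1)[1]$-periodicity give a weight structure whose heart is generated by the unit. This makes $0$ prime in the localisation, hence $\langle\cone(\rho)\rangle$ prime in $\cat{K}^c_\infty$. In this localisation $\cone(r_0)$ lies in every non-zero tt-ideal, because modulo $r_0$ the ring is $\F_2[\rho^{\pm1}]$, which contributes nothing to $\Hom(\unit[l],\unit)$ for $l\geq 1$.

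Finally, inverting $r_0$ gives a graded field. The pull-back of $\langle\tilde{\mathcal{D}}_\rho\otimes\cone(r_0)\rangle$ is then identified with $\langle\cone(r_0)\rangle$.
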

In the above picture, we used the superscript  $iso$ to stress that these objects are considered in the isotropic category. However, for ease of notation, we shall drop them in this section since we will work only with the isotropic category.
\begin{proof}
We have already established in Lemma \ref{lemma: weight structure on isotropic cat and locality} that $0$ is prime. By decomposing $\Spc(\cat{K}^c_\infty)$ as 
\[\supp(\cone(\rho)) \sqcup U(\cone(\rho)),\]
we see that to describe the points of the spectrum, it suffices to show that
\begin{enumerate}
    \item $\supp(\cone(\rho))=\{0\}$,
    \item $U(\cone(\rho))=\{\langle \cone(\rho) \rangle, \langle \cone(r_0) \rangle\}$.
\end{enumerate}
Using an analogous strategy to the proof of Theorem \ref{thm: case m=1} and Theorem \ref{thm: case m geq 3}, (1) will follow from Proposition \ref{prop: support of cone(rho) in isotropic case}. For the second claim, we argue analogously to the proof of Theorem \ref{thm: case m=1} and invert $\rho$ by considering the smashing localisation corresponding to the following idempotent triangle,
\[\mathcal{D}_{\rho} \to \Upsilon^{iso} \to \tilde{\mathcal{D}}_{\rho} \to \mathcal{D}_{\rho}[1],\]
where 
\[\tilde{\mathcal{D}}_{\rho} \cong \hocolim_{n}\big(\Upsilon^{iso} \xrightarrow{\rho} \Upsilon^{iso}(1)[1]  \xrightarrow{\rho(1)[1]}\dots).\] 
In particular, the tt-category
\[ \cat{M}_\infty= \tilde{\mathcal{D}}_\rho \otimes \cat{K}_\infty\]
gets identified with the localisation of $\cat{K}_\infty$ at the localising tensor ideal generated by $\cone(\rho)$. This reduces (2) to computing $\Spc(\cat{M}^c_\infty)$. By defining an appropriate weight structure on $\cat{M}^c_\infty$ in Lemma \ref{lemma: weight structure on Mcinfty}, we will deduce that $0$ is a tt-prime of $\cat{M}^c_\infty$; in particular, $\langle \cone(\rho) \rangle$ is prime in $\cat{K}^c_\infty$. This will further reduce (2) to showing that 
\begin{enumerate}
\setcounter{enumi}{2}
    \item $\langle \tilde{\mathcal{D}}_\rho \otimes \cone(r_0) \rangle$ is the only non-zero prime tt-ideal of $\cat{M}^c_\infty$.
\end{enumerate}
Since the pull-back on spectra induced by the localisation
\[\Spc(\cat{M}^c_\infty) \to \Spc(\cat{K}^c_\infty)\]
is surjective on $U(\cone(\rho))$, and hence it covers $U(\cone(r_0)) \subseteq U(\cone(\rho))$ since $\cone(\rho) \in \langle \cone(r_0) \rangle$ by Proposition \ref{prop: support of cone(rho) in isotropic case}. In particular, assuming (3), we may deduce from  \cite[Proposition 2.10]{balmer2022three} that the prime corresponding to the pullback of 
\[ \langle\tilde{\mathcal{D}}_\rho \otimes \cone(r_0) \rangle\] 
is indeed generated by $\cone(r_0)$. From here, the description of the specialisation relations is straightforward. Moreover, it is clear that in this case the closed subsets are exactly those subsets which are closed under the specialisation relation. Thus, it remains to establish (3): to do this, it suffices to show that
\begin{enumerate}
\setcounter{enumi}{3}
    \item $\supp(\tilde{\mathcal{D}}_\rho \otimes \cone(r_0))=\{0\}$,
    \item $U(\tilde{\mathcal{D}}_\rho \otimes \cone(r_0))= \{\langle \tilde{\mathcal{D}}_\rho \otimes \cone(r_0) \rangle \}$.
\end{enumerate}
Claim (4) will follow from Proposition \ref{prop: support of cone(r_0) in Mcinfty}. Finally, to establish (5) we may formally invert $r_0$ by considering the smashing localisation of $\cat{M}_\infty$ corresponding to the idempotent triangle 
\[\mathcal{D}_{r_0} \to  \tilde{\mathcal{D}}_{\rho} \to \tilde{\mathcal{D}}_{r_0} \to \mathcal{D}_{r_0}[1],\]
where
\[\tilde{\mathcal{D}}_{r_0}=\hocolim_{n}\big(\tilde{\mathcal{D}}_\rho \xrightarrow{r_0} \tilde{\mathcal{D}}_\rho[-1] \xrightarrow{r_0[-1]}   \dots).\]
In particular, the tt-category
\[ \cat{N}_\infty= \tilde{\mathcal{D}}_{r_0} \otimes \cat{M}_\infty\]
gets identified with the localisation of $\cat{M}_\infty$ at the localising tensor ideal generated by $\tilde{\mathcal{D}}_\rho \otimes \cone(r_0)$. This reduces (5) to proving that $\Spc(\cat{N}_\infty^c)=\{0\}$. To show this, first observe that in $\cat{M}_\infty$,
\[\mathbb{E}\mathrm{nd}(\tilde{\mathcal{D}}_\rho)\cong \colim_n(\mathbb{E}\mathrm{nd}(\Upsilon^{iso})),\]
where the transition maps are given by multiplication by $\rho$. In particular, the above ring is isomorphic to 
\[\mathbb{E}\mathrm{nd}(\Upsilon^{iso})[\rho^{-1}].\]
Moreover, from Lemma \ref{lemma: bigraded endo ring of upsilon}, we have a description of the above ring and, by induction on the relations among its generators we obtain that for each $i \geq 1$, $r_{i}=\rho^{-(2^i-1)}r_0^{2^i}$. This tells us that  
\[\mathbb{E}\mathrm{nd}(\tilde{\mathcal{D}}_\rho)\cong \F_2[\rho, \rho^{-1}][r_0].\]
From this, we may deduce that since in  $\cat{N}_\infty$
\[\mathbb{E}\mathrm{nd}(\tilde{\mathcal{D}}_{r_0}) \cong \colim_{n} (\mathbb{E}\mathrm{nd}(\tilde{\mathcal{D}}_\rho)),\]
where the transition maps are given by multiplication by $r_0$, then
\[\mathbb{E}\mathrm{nd}(\tilde{\mathcal{D}}_{r_0}) \cong \F_2[\rho, \rho^{-1}, r_0, r_0^{-1}].\]
Because the thick closure of $\tilde{\mathcal{D}}_{r_0}$ is the whole category $\cat{N}_\infty^c$, then the description of the bi-graded endomorphism ring of the unit shows that every non-zero object of this category will be isomorphic to direct sums of $\tilde{\mathcal{D}}_{r_0}$. In particular, $\cat{N}_\infty^c$ is equivalent to the category of finite dimensional $\F_2$-vector spaces, hence it is clear that  $\Spc(\cat{N}_\infty^c)=\{0\}$.
\end{proof}

\begin{prop}\label{prop: support of cone(rho) in isotropic case}
\emph{For every non-zero object $A$ in $\cat{K}^c_\infty$, we have that $\cone(\rho) \in \langle A \rangle.$}
\end{prop}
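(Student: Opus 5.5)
We follow the template of Propositions \ref{prop: case m=2 support of cone(r_0)} and \ref{prop: case m geq 3 support of cone(r_m-1)}. By Lemma \ref{lemma: weight structure on isotropic cat and locality}, $\cat{K}^c_\infty$ carries a bounded Chow weight structure with semisimple heart generated by $\Upsilon^{iso}\{i\}$. Hence, by \cite[Proposition 5.10]{vishik2024isotropic}, every non-zero $A$ admits a weight complex with zero differentials. This gives a triangle $A_{>m}[-1]\xrightarrow{f} A_m\to A\to A_{>m}$, where $A_m$ is a non-zero sum of $\Upsilon^{iso}\{i\}[m]$. It suffices to show that $f$ is $\otimes$-nilpotent on $\cone(\rho)$: then $\cone(\rho)\otimes A_m$ is a summand of an object of $\langle A\rangle$, so $\cone(\rho)\in\langle A\rangle$. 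Since the differentials vanish, $f$ factors through an object of $(\cat{K}^c_\infty)^{\geq m+1}$. By duality, the problem reduces to showing that every morphism $s\colon Y\to\unit$ with $Y\in(\cat{K}^c_\infty)^{\geq 1}$ is $\otimes$-nilpotent on $\cone(\rho)$. Since $Y^{\otimes n}$ is an extension of twists $\unit(i)[j]$ with $j-2i\geq n$, it is enough to prove
\[\Hom(\cone(\rho)(i)[j],\cone(\rho))=0\quad\text{for } j-2i\geq K\]
for some constant $K$ (in fact we expect $K=1$ to work).

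To prove this vanishing, apply $\Hom(-,\cone(\rho))$ to the defining triangle $\unit\xrightarrow{\rho}\unit(1)[1]\to\cone(\rho)$, twisted by $(i)[j]$. This reduces the claim to the vanishing of $\Hom(\unit(a)[b],\cone(\rho))$ for two bidegrees: $(a,b)=(i,j+1)$, and $(a,b)=(i+1,j+1)$ up to the shift conventions. In both cases $b-2a\geq j-2i-1$.

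Next, apply $\Hom(\unit(a)[b],-)$ to the triangle defining $\cone(\rho)$. By Lemma \ref{lemma: bigraded endo ring of upsilon}, $\mathbb{E}\mathrm{nd}(\unit)$ is $\F_2[\rho,r_0,r_1,\dots]/(r_i^2-r_{i+1}\rho)$. We check that multiplication by $\rho$ is injective on this ring, using the monomial basis given by $\rho^k$ times square-free products of the $r_i$. Granted this, $\Hom(\unit(a)[b],\cone(\rho))$ is identified, up to a shift, with a graded piece of the quotient
\[\mathbb{E}\mathrm{nd}(\unit)/(\rho)\cong\F_2[r_0,r_1,\dots]/(r_i^2).\]
This quotient is an exterior algebra on the $r_i$.

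The key point is to locate this exterior algebra in $\Z^2$. Each $r_i$ sits in bidegree $(-d_i,-2d_i-1)$, so $y-2x=-1$ for each generator. A product of $k$ distinct generators therefore satisfies $y-2x=-k\leq 0$, and the whole quotient lies in the half plane $y-2x\leq 0$. Unlike the case $3\leq n<\infty$, this quotient is not bounded in $\Z^2$, but it is bounded on the side that matters. The bidegrees arising from (I) and (II) are shifted by a fixed amount (at most $3$) and have $y-2x\geq j-2i-O(1)$. So choosing $K$ around $3$ forces both groups to vanish, which gives the vanishing statement and hence nilpotence on $\cone(\rho)$.

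The main obstacle is the bookkeeping of the shifts in the triangle for $\cone(\rho)$, namely the exact offset of the $(1)[1]$ twist, to confirm that the constant $K$ is uniform. A second point needing care is the injectivity of $\rho$ in $\mathbb{E}\mathrm{nd}(\unit)$. If the monomial-basis argument becomes awkward, we would prove it by passing to the colimit over the injective maps at the finite stages $\mathbb{E}\mathrm{nd}(\tilde{\mathcal{X}}_n)$, where $\rho$ is injective because $F$ is real closed.
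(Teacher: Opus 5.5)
There is a genuine gap in the final vanishing step, and it is not a matter of shift bookkeeping. Your reduction up to that point matches the paper: the weight complex with zero differentials, duality, injectivity of $\rho$, and the identification of the groups with graded pieces of $\Lambda_{\F_2}(r_i)$. But the claim
\[\Hom(\cone(\rho)(i)[j],\cone(\rho))=0 \quad\text{for } j-2i\geq K\]
is false for every constant $K$, because the half-plane argument has the orientation reversed.

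A class in $\mathbb{E}\mathrm{nd}(\unit)$ of bidegree $(x)[y]$ is a map $\unit(-x)[-y]\to\unit$. So $\Hom(\unit(a)[b],\cone(\rho))$ is the piece of $\Lambda_{\F_2}(r_i)$ in bidegree $(1-a)[1-b]$, which has $y-2x=-(b-2a)-1$. The groups you must kill therefore lie in $y-2x\leq -(j-2i)+O(1)$. That is inside the half plane $y-2x\leq 0$ where the exterior algebra lives, not outside it. This is forced: maps from weight $\geq 1$ to weight $0$ are exactly what the weight structure allows.

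Explicitly, take $\lambda=r_0r_1\cdots r_{N-1}$. It is non-zero modulo $\rho$, and $\lambda\otimes\mathrm{id}_{\cone(\rho)}$ is a non-zero map $\cone(\rho)(i)[j]\to\cone(\rho)$ with $j-2i=N$. To see it is non-zero, precompose with the shifted map $\unit(1)[1]\to\cone(\rho)$; under the identification above you get $\bar\lambda\neq 0$. Already $\lambda=r_0$ gives $\Hom(\cone(\rho)[1],\cone(\rho))\neq 0$, which refutes your expected $K=1$.

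The missing ingredient is a bound on the twists. When passing to $Y^{\otimes n}$ you keep only $j-2i\geq n$. However, $Y$ is built from finitely many $\unit(i)[j]$ with $|i|\leq K$, so the pieces of $Y^{\otimes n}$ also satisfy $|i|\leq nK$. The paper proves vanishing only on the region $j-2i\geq n$, $|i|\leq nK$, for $n$ large, and this suffices for the $\otimes$-nilpotence of $s$ on $\cone(\rho)$.

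On that region, the relevant graded pieces of $\Lambda_{\F_2}(r_i)$ have $y-2x\leq -n$. They are therefore spanned by products of at least $n$ distinct $r_i$. Since $r_i$ has first coordinate $-d_i$, such a product has first coordinate of absolute value at least $\sum_{k<n}d_k=2^n-n-1$. This eventually exceeds $nK+1$, so the groups vanish; this is the argument of \cite[Theorem 3.5]{sparks2025tensor}.

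Contrast this with Proposition \ref{prop: case m geq 3 support of cone(r_m-1)}, where the quotient is bounded and a uniform constant works. Here the quotient is unbounded in precisely the relevant direction. Only its sparsity, the exponential growth of the twist, makes the nilpotence argument go through.
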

\begin{proof}
Arguing as in Proposition \ref{prop: case m=2 support of cone(r_0)}, we know that using the weight structure defined in Lemma \ref{lemma: weight structure on isotropic cat and locality}, we may reduce the statement to checking for any object $Y \in (\cat{K}^c_\infty)^{\geq 1}$, any morphism $s:Y \to \Upsilon^{iso}$ is $\otimes$-nilpotent on $\cone(\rho)$. We will do this by proving the stronger statement that for any fixed $K \geq 0$, there is a sufficiently large integer $n$ such that 
\[\Hom(\cone(\rho)(i)[j], \cone(\rho))=0, \quad \text{ for }  j-2i \geq n \text{ and } -nK\leq i\leq nK.\]
This implies what we are after because we know that $Y$ is an extension of finitely many $\Upsilon^{iso}(i)[j]$ for $j-2i \geq 1$; in particular, we can find a sufficiently large constant $K$ such that $-K\leq i \leq K$. Hence, $Y^{\otimes n} \otimes \cone(\rho)(i)[j]$ is an extension of $\cone(\rho)(i)[j]$ for $j-2i \geq n$ and $-nK\leq i \leq nK$, from which the result becomes clear.

Applying $\Hom(-, \cone(\rho))$ to the defining triangle of $\cone(\rho)(i)[j]$, we observe that it is sufficient to check that 
\[\Hom(\Upsilon^{iso}(i+1)[j+1], \cone(\rho))=0, \quad \Hom(\Upsilon^{iso}(i)[j+1],  \cone(\rho))=0 .\]
Moreover, applying $\Hom(\cone(\rho)(i')[j'], -)$ to the defining triangle of $\cone(\rho)$, and using the fact that multiplication by $\rho$ is injective in $\mathbb{E}\mathrm{nd}(\Upsilon^{iso})$, we deduce that 
\[ \Hom(\Upsilon^{iso}(i')[j'], \cone(\rho))\cong \Hom(\Upsilon^{iso}(i')[j'],\Upsilon^{iso}(1)[1])/(\im(\rho_*)).\]
Combining everything together, we are reduced to check that, for any fixed $K \geq 0$, there is a sufficiently large integer $n$ such that 
\[\text{(I)} \, \, \Hom(\Upsilon^{iso}(i)[j], \Upsilon^{iso})\big /\im(\rho(-1)[-1]_*), \quad \text{(II)} \, \, \Hom(\Upsilon^{iso}(i-1)[j], \Upsilon^{iso})\big /\im(\rho(-1)[-1]_*),\]
both vanish for $j-2i \geq n$ and $-nK\leq i\leq nK$. By duality, these groups may be computed by modding out $\mathbb{E}\mathrm{nd}(\Upsilon^{iso})$ by the ideal generated by $\rho$. The resulting bi-graded ring is isomorphic to the exterior algebra $\Lambda_{\F_2}(r_i, i\in \N_0)$. Finally, we deduce that (I) and (II) vanish in the desired bounds exactly by the same argument as in the proof of \cite[Theorem 3.5]{sparks2025tensor}.
\end{proof}

We now move to consider the spectrum of the tt-category $\cat{M}^c_\infty$ defined in the proof of Theorem \ref{thm: spectrum of isotropic case}. To simplify notation, we shall denote by $\unit$ its $\otimes$-unit $\tilde{\mathcal{D}}_\rho$. Recall that we have already seen in the proof of Theorem \ref{thm: spectrum of isotropic case} that 
\[\mathbb{E}\mathrm{nd}(\unit)\cong \colim_n(\mathbb{E}\mathrm{nd}(\Upsilon^{iso}))\cong \F_2[\rho, \rho^{-1}][r_0].\]
We proceed by defining a weight structure on $\cat{M}^c_\infty$.
\begin{lemma}\label{lemma: weight structure on Mcinfty}
\emph{The category $\cat{M}^c_\infty$ affords a unique bounded weight structure whose heart $\cat{H}$ is the additive subcategory generated by the unit. In particular, $\cat{M}^c_\infty$ is a local tt-category.}
\end{lemma}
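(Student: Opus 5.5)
Since $\mathbb{E}\mathrm{nd}(\unit)\cong \F_2[\rho,\rho^{-1}][r_0]$, I would apply Theorem \ref{thereom: bondarko2010weight} with $H=\{\unit\}$. First I record where the homogeneous elements sit in $\Z^2$, with the convention of Remark \ref{remark: support of End tilde Xalpha}. The element $\rho$ has bi-degree $(1)[1]$ and $r_0$ has bi-degree $(0)[-1]$. Hence a monomial $\rho^a r_0^b$ with $a\in\Z$ and $b\geq 0$ sits at $(a,a-b)$. In particular $\Hom(\unit,\unit[n])$ is the bi-degree $(0)[n]$ part, which is spanned by $r_0^{-n}$ when $n\leq 0$ and vanishes when $n\geq 1$. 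This gives the negativity condition for $H=\{\unit\}$.

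For generation, note that $\cat{M}^c_\infty$ is the idempotent completion of the thick closure of the twists $\tunit(i)[j]$; in fact it is generated by $\unit(i)$ together with shifts. The $\rho$-periodicity $\unit\cong\unit(1)[1]$, given by the invertible element $\rho$, gives $\unit(i)\cong\unit[-i]$. Thus every twisted suspension of the unit is a shift of $\unit$, so the thick closure of $\unit$ is all of $\cat{M}^c_\infty$. For this I would check that the compact generators of $\cat{M}_\infty$ are the twists of $\tilde{\mathcal{D}}_\rho$, as for the other localisations. Bondarko's theorem then produces a unique weight structure. Its heart is the idempotent completion of $\mathrm{add}(\unit)$. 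Since $\mathrm{End}(\unit)$ is the bi-degree $(0)[0]$ part, which is $\F_2$, this is the category of finite-dimensional $\F_2$-vector spaces, already idempotent complete and semisimple. Boundedness is automatic, since the weight structure is obtained from generation in finitely many steps.

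For locality, the heart contains $\unit$ and is closed under $\otimes$, since $\unit\otimes\unit=\unit$. So the weight structure is compatible with the tensor product. Moreover, $\cat{M}_\infty$ is a smashing localisation of a category with a stable symmetric monoidal $\infty$-enhancement, hence has one itself. The weight complex functor $t\colon\cat{M}^c_\infty\to \K^b(\cat{H})$ is therefore a conservative tt-functor. Its target is $\K^b$ of finite-dimensional $\F_2$-vector spaces, which is local. Remark \ref{remark: weight complex functors and local tt-categories} then gives that $\cat{M}^c_\infty$ is local.

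The main point requiring care is the identification $\unit(1)[1]\cong\unit$ and hence the claim that $\unit$ alone generates. One must ensure that the localisation really inverts $\rho$ as a morphism of objects, not merely in the graded ring. This follows because $\tilde{\mathcal{D}}_\rho$ is a telescope on $\rho$: the map $\rho\colon\tilde{\mathcal{D}}_\rho\to\tilde{\mathcal{D}}_\rho(1)[1]$ is an isomorphism, since it is a shift of the telescope. A second subtlety is the bi-grading bookkeeping for negativity. Because the twist is absorbed, the only concern is the $r_0$-direction, and $r_0$ has strictly negative shift, so negativity holds.
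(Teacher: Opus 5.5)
Your proposal is correct and follows essentially the same route as the paper's proof. Like the paper, you apply Bondarko's theorem to $H=\{\unit\}$, read off negativity from $\mathbb{E}\mathrm{nd}(\unit)\cong\F_2[\rho^{\pm1}][r_0]$, get generation from the $(1)[1]$-periodicity, and obtain locality from the conservative weight complex functor to $\K^b$ of finite-dimensional $\F_2$-vector spaces. Your explicit bi-degree bookkeeping (placing $\rho^a r_0^b$ at $(a,a-b)$) simply spells out what the paper summarises as following from the description of the bi-graded endomorphism ring.
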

\begin{proof}
It is clear that $\cat{H}$ is $\F_2$-linear and semisimple, in particular it is idempotent complete. Moreover, the thick closure of $\cat{H}$ is the whole $\cat{M}^c_\infty$ by the $(1)[1]$-periodicity. Finally, by Theorem \ref{thereom: bondarko2010weight}, it suffices to verify that
\[\Hom(\unit, \unit[n])=0, \quad \text{for } n \geq 1, \]
which follows from the description of the bi-graded endomorphism ring of the unit.

The second claim follows as usual from Remark \ref{remark: weight complex functors and local tt-categories} after observing that $\cat{H}$ is tensor and semisimple, and that $\cat{M}^c_\infty$ admits a stable symmetric monoidal $\infty$-categorical enhancement.
\end{proof}
Using the weight structure defined above, we discuss the remaining result on which the proof of Theorem \ref{thm: spectrum of isotropic case} relied on.
\begin{prop}\label{prop: support of cone(r_0) in Mcinfty}
\emph{For every non-zero object $A$ in $\cat{M}^c_\infty$, we have that $\tilde{\mathcal{D}}_\rho \otimes \cone(r_0) \in \langle A \rangle.$}
\end{prop}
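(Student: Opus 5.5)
We follow the template of Proposition \ref{prop: case m=2 support of cone(r_0)}, now using the weight structure of Lemma \ref{lemma: weight structure on Mcinfty}. Its heart is generated by $\unit=\tilde{\mathcal{D}}_\rho$ and is semisimple with $\mathrm{End}(\unit)\cong\F_2$. Hence, by \cite[Proposition 5.10]{vishik2024isotropic}, every non-zero $A\in\cat{M}^c_\infty$ admits a weight complex with zero differentials. We take the lowest weight piece and obtain a triangle
\[A_{>m}[-1]\xrightarrow{f} A_m\to A\to A_{>m},\]
where $A_m$ is a non-zero finite sum of copies of $\unit[m]$. Thus $\unit[m]$ is a summand of $A_m$, and it suffices to show that $f$ is $\otimes$-nilpotent on $\cone(r_0)$ (we write $\cone(r_0)$ for $\tilde{\mathcal{D}}_\rho\otimes\cone(r_0)$). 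Since the differentials vanish, $f$ factors through an object of $(\cat{M}^c_\infty)^{\geq m+1}$. By duality and the $(1)[1]$-periodicity, which makes all twists $\unit(i)[j]$ isomorphic to shifts $\unit[j-i]$, we reduce to showing the following. For any $Y\in(\cat{M}^c_\infty)^{\geq 1}$, that is, any finite extension of copies of $\unit[j]$ with $j\geq 1$, every morphism $s\colon Y\to\unit$ is $\otimes$-nilpotent on $\cone(r_0)$.

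As before, we prove the stronger vanishing statement
\[\Hom(\cone(r_0)[j],\cone(r_0))=0\quad\text{for } j\geq 2 \text{ (or some fixed bound)},\]
which suffices because $Y^{\otimes n}$ is an extension of $\unit[j]$ with $j\geq n$. To verify it, we apply $\Hom(-,\cone(r_0))$ to the triangle defining $\cone(r_0)[j]$, which reduces the claim to the vanishing of $\Hom(\unit[j'],\cone(r_0))$ for $j'$ in a shifted range. We then apply $\Hom(\unit[j'],-)$ to the triangle $\unit\xrightarrow{r_0}\unit[-1]\to\cone(r_0)$. Since $\mathbb{E}\mathrm{nd}(\unit)\cong\F_2[\rho^{\pm1}][r_0]$ is a domain, multiplication by $r_0$ is injective. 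Hence $\Hom(\unit[j'],\cone(r_0))$ is a graded piece of $\mathbb{E}\mathrm{nd}(\unit)/(r_0)\cong\F_2[\rho^{\pm1}]$.

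In single-degree terms (after using $\rho$-periodicity to collapse the twist), $\F_2[\rho^{\pm 1}]$ is concentrated in degree $0$. Indeed $\rho$ has bidegree $(1)[1]$ and is therefore of weight-degree zero, while $r_0$ has degree $[-1]$. Consequently $\Hom(\unit[j'],\cone(r_0))=0$ unless $j'\in\{0\}$ up to the shift coming from the cone. Both terms arising from the first long exact sequence are then zero once $j\geq 2$, which gives the claim. The main care needed is in this bookkeeping of degrees: one must check that, in the weight grading, $\rho$-periodicity identifies $\unit(i)[j]$ with $\unit[j-i]$, so that the relevant range is governed by $j-i$ rather than $j-2i$. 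One must also check that the orthogonality range of the weight structure in Lemma \ref{lemma: weight structure on Mcinfty} is compatible with this identification. Once that is settled, the nilpotence argument is identical to the one in Proposition \ref{prop: case m=2 support of cone(r_0)}.
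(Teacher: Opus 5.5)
Your proposal is correct and is essentially the paper's own proof: the same weight-structure reduction to $\otimes$-nilpotence of $s\colon Y\to\unit$ on $u=\tilde{\mathcal{D}}_\rho\otimes\cone(r_0)$ for $Y\in(\cat{M}^c_\infty)^{\geq 1}$, the same stronger claim $\Hom(u[l],u)=0$, and the same computation via $\mathbb{E}\mathrm{nd}(\unit)/(r_0)\cong\F_2[\rho^{\pm1}]$. The only exceptional degree is $\Hom(\unit[-1],u)\neq 0$, so the paper gets the vanishing for all $l\geq 1$; your hedged bound $j\geq 2$ is weaker but harmless.
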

\begin{proof}
For ease of notation, we shall denote throughout the proof $u:=\tilde{\mathcal{D}}_\rho \otimes \cone(r_0)$. Arguing as in Proposition \ref{prop: case m=2 support of cone(r_0)}, we know that using the weight structure defined in the previous lemma, the proposition reduces to verifying that for any object $Y \in (\cat{M}^c_\infty)^{\geq 1}$, any morphism $s:Y \to \unit$ is $\otimes$-nilpotent on $u$.  We will do this by proving the stronger statement that for any such $Y$,
\[\Hom(Y \otimes u, u)=0.\]
This follows from the following claim
\[\Hom(u[l], u)=0, \quad \text{ for all } l \geq 1.\]
Note that it is clear that the claim implies what we are after since $Y$ is an extension of finitely many $\unit[l]$ for $l \geq 1$. To prove this, we may argue as in the proof of Proposition \ref{prop: case m=2 support of cone(r_0)} and consider the triangle defining $u$ in $\cat{M}^c_\infty$ with the appropriate induced long exact sequences to reduce the claim to the verification that for all $l \geq 1$,
\[ \Hom(\unit(i)[l-1], u)=0, \quad \, \, \Hom(\unit(i)[l+1],  u)=0.\]
 Moreover, multiplication by $r_0$ is injective in $\mathbb{E}\mathrm{nd}(\unit)$, hence the above reduces to verifying that
\[\text{(I)} \, \, \Hom(\unit[l], \unit)\big /\im((r_0[1])_*)=0, \quad \text{(II)} \, \, \Hom(\unit[l+2], \unit)\big /\im((r_0[1])_*)=0,\]
for all $l \geq 1$. By duality, we may compute these groups by modding out the bi-graded endomorphism ring of the unit by the ideal generated by $r_0$: the resulting ring is isomorphic to $\F_2[\rho, \rho^{-1}]$, from which the vanishing of (I) and (II) becomes immediate.
\end{proof}
\section{The prime tt-ideals}
Recall that from Theorem \ref{theorem: barthel2024surjectivity} and Theorem \ref{theorem: joinlty conservative geo functors} we know that the following map
\[ \bigsqcup_{n \in \N \cup \{\infty\}}\Spc(\pi_n): \bigsqcup_{n \in \N \cup \{\infty\}}\Spc(\cat{K}^c_n) \to \Spc(\DQMgm(F; \F_2)), \]
is surjective. We will now verify that this map is also injective, which will give us a full description of all the prime tt-ideals of $\Spc(\DQMgm(F; \F_2))$. We begin by showing that for each $n \in \N \cup \{\infty\}$, $\Spc(\pi_n)$ is itself injective. These verifications will be done by inspection, and we refer the reader to Table \ref{tab:geometric_functors} to recall the behaviour of these functors.

\subsection*{Injectivity of $\Spc(\pi_1)$}
Let 
\[\p_{1,0}=\pi_1^{-1}(0), \quad \p_{1,1}=\pi_1^{-1}(\langle \cone(\tau)|_{\overline{F}} \rangle).\] 
Note that $\p_{1,0} \subset \p_{1,1}$, and the inclusion is proper since $\cone(\tau) \in \p_{1,1}$ but $\cone(\tau) \not \in \p_{1,0}$.
\begin{remark}\label{remark: generators case j=1}
Note that $\langle \tilde{\M}_1 \rangle \subseteq \p_{1,0}$, and we shall later prove that this is in fact an equality. Moreover, $\langle \cone(\tau) \rangle = \p_{1,1}$ since this corresponds to the kernel of étale realisation restricted to $\DQMgm(F; \F_2)$ \cite[Example 2.1]{vishik2025balmer}.
\end{remark}

\subsection*{Injectivity of $\Spc(\pi_2)$}
Let 
\[\p_{2,0}=\pi_2^{-1}(0), \quad \p_{2,1}=\pi_2^{-1}(\langle \cone(r_0)|_{F(Q_2)} \rangle)=\pi_2^{-1}(\langle \tilde{\M}_1|_{F(Q_2)} \rangle).\] Note that $\p_{2,0} \subset \p_{2,1}$, and the inclusion is proper since $\tilde{\M}_1 \in \p_{2,1}$ but $\tilde{\M}_1 \not \in \p_{2,0}$.
\begin{remark} \label{remark: generators case j=2}
Note that $\langle \M_1,\tilde{\M}_2 \rangle \subseteq \p_{2,0}$ and $\langle \M_1,\tilde{\M}_1 \rangle \subseteq \p_{2,1}$, and we shall later see that these are in fact equalities.
\end{remark}

\subsection*{Injectivity of $\Spc(\pi_n)$ for $3 \leq n< \infty$}
Let
\[\p_{n,0}=\pi_n^{-1}(0), \quad \p_{n,1}=\pi_n^{-1}(\langle \cone(r_{n-2})|_{F(Q_n)} \rangle)=\pi_n^{-1}(\langle \tilde{\M}_{n-1}|_{F(Q_n)}) \rangle).\] 
Note that $\p_{n,0} \subset \p_{n,1}$, and the inclusion is proper since $\tilde{\M}_{n-1} \in \p_{n,1}$ but $\tilde{\M}_{n-1} \not \in \p_{n,0}$.
\begin{remark} \label{remark: generators 3 leq j le infty}
Note that $\langle \M_{n-1},\tilde{\M}_n \rangle \subseteq \p_{n,0}$ and $\langle \M_{n-1},\tilde{\M}_{n-1} \rangle \subseteq \p_{n,1}$, and we shall later see that these are in fact equalities.
\end{remark}

\subsection*{Injectivity of $\Spc(\pi_\infty)$} The injectivity follows immediately from the fact that $\pi_\infty$ is the restriction to the compact objects of a localisation. We may also see this directly via a similar strategy as in the previous cases: let 
\[\p_{\infty,0}=\pi_\infty^{-1}(0), \, \, \p_{\infty,1}=\pi_\infty^{-1}(\langle \cone(\rho)^{iso} \rangle ), \, \, \p_{\infty,2}=\pi_\infty^{-1}(\langle \cone(r_0)^{iso} \rangle )=\pi_\infty^{-1}(\langle \cone(\rho)^{iso},\cone(\tau)^{iso} \rangle).\] 
We have inclusions $\p_{\infty,0} \subset \p_{\infty,1} \subset \p_{\infty,2}$ and they are proper since $\cone(\rho) \not \in \p_{\infty,0}$, but $\cone(\rho) \in \p_{\infty,1}$; similarly, $\cone(\tau) \in \p_{\infty,2}$, but $\cone(\tau) \not \in \p_{\infty,1}$.

\begin{remark}\label{remark: generators j=infty}
Note that $\langle \M_n | n \in \N \rangle = \p_{\infty,0}$ because, as we have already seen, $\pi_\infty$ is the restriction to the compact objects of the smashing localisation of the corresponding localising tensor ideal. Moreover, $\langle \cone(\rho) \rangle \subseteq \p_{\infty,1}$ and $\langle \cone(\rho), \cone(\tau) \rangle \subseteq \p_{\infty,2}$, and we shall later see that these are indeed equalities.
\end{remark}

\subsection*{Injectivity of $\bigsqcup_{n \in \N \cup \{\infty\}}\Spc(\pi_n)$}
To verify the injectivity of $\bigsqcup_{n \in \N \cup \{\infty\}}\Spc(\pi_n)$, we need to prove that $\p_{i,j}=\p_{i', j'}$ if and only if $i=i'$ and $j=j'$. The fact that $\Spc(\pi_n)$ is injective for all $n \in \N \cup \{\infty\}$ implies that if $i=i'$, then $j=j'$;  so we must show that if $i \neq i'$, then $\p_{i,j}\neq\p_{i', j'}$.

First, note that 
\[\p_{1,0}, \p_{1,1} \neq \p_{i,j} \text{ for } i \neq 1.\]
This is because 
\[\pi_1(\M_1)= \T|_{\overline{F}} \oplus \T|_{\overline{F}},\] 
is not contained in any prime tt-ideal of $\cat{K}_1^c$, so $\M_1 \not \in  \p_{1,0}, \p_{1,1}$. However, $\M_1 \in \p_{i,j}$ when $i \neq 1$.

Next, for each $2 \leq k< \infty$, we can recursively verify the following.
\[\p_{k,0}, \p_{k,1} \neq \p_{i,j} \text{ for all }  k+1 \leq i \leq \infty.\]
This is because \[\pi_k(\M_k)= \tilde{\mathcal{X}}_{\rho^{k-1}} \oplus  \tilde{\mathcal{X}}_{\rho^{k-1}}\{d_{k-1}\}\] 
is not contained in any prime tt-ideal of $\cat{K}_k^c$, so $\M_k \not \in  \p_{k,0}, \p_{k,1}$. However, $\M_k \in \p_{i,j}$ for all $i$ and $j$ in the specified range.
\subsection{Generators and specialisation relations}
Our current knowledge gives us the following picture of the Balmer spectrum of $\DQMgm(F; \F_2)$.
\[\begin{tikzcd}[ampersand replacement=\&, sep=tiny]
	{\p_{\infty,0}} \&\& {\p_{k+1,0}} \& {\p_{k,0}} \&\& {\p_{3,0}} \& {\p_{2,0}} \& {\p_{1,0}} \\
	\& \dots \&\&\& \dots \\
	\&\& {\p_{k+1,1}} \& {\p_{k,1}} \&\& {\p_{3,1}} \& {\p_{2,1}} \& {\p_{1,1}} \\
	{\p_{\infty,1}} \\
	\&\&\&\& {\p_{\infty,2}}
	\arrow[no head, from=1-1, to=4-1]
	\arrow[no head, from=1-3, to=3-3]
	\arrow[no head, from=1-4, to=3-4]
	\arrow[no head, from=1-6, to=3-6]
	\arrow[no head, from=1-7, to=3-7]
	\arrow[no head, from=1-8, to=3-8]
	\arrow[no head, from=4-1, to=5-5]
	\arrow[color={rgb,255:red,92;green,92;blue,214}, no head, from=5-5, to=3-8]
\end{tikzcd}\]
The black specialisations are clear, and the blue specialisation follows from the fact that by Remark \ref{remark: generators case j=1} and Remark \ref{remark: generators j=infty}
\[\p_{1,1}=\langle \cone(\tau) \rangle \subset \p_{\infty,2}.\]
To understand all the specialisation relations, we will first describe the generators of all the prime tt-ideals. Recall that, by Remark \ref{remark: generators case j=1} and Remark \ref{remark: generators j=infty}, we currently know that 
\[\p_{1,1}=\langle \cone(\tau) \rangle, \quad \p_{\infty,0}=\langle \M_n | n \in \N \rangle.\]
We can also easily find generators for $\p_{\infty,1}$ and $\p_{\infty,2}$. 
\begin{lemma}
\emph{The prime tt-ideals $\p_{\infty, 1}$ and $\p_{\infty,2}$ are generated by}
\[\p_{\infty, 1}= \langle \cone(\rho)\rangle, \quad \p_{\infty, 2}= \langle \cone(\rho), \cone(\tau)\rangle.  \]
\end{lemma}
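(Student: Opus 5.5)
The plan is to use that $\pi_\infty$ is, on compact objects, the restriction of the smashing localisation $\Upsilon^{iso}\otimes-$ with kernel $\Loc_{\otimes}(\M_n : n\in\N)$. The key fact is that for a smashing localisation $L\colon \cat{K}\to \cat{K}_\infty$, the preimage of a tt-ideal of the form $\langle L(S)\rangle$ in $\cat{K}_\infty^c$ is $\langle S\rangle + \ker(L)\cap \cat{K}^c$. More precisely, we will prove that
\[
\pi_\infty^{-1}\big(\langle \pi_\infty(S)\rangle\big) \;=\; \langle S,\ \M_n : n\in \N\rangle
\]
for any set $S$ of compact objects. For one inclusion, note that objects of $\langle S, \M_n\rangle$ clearly map into $\langle \pi_\infty(S)\rangle$. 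For the converse, suppose $\Upsilon^{iso}\otimes A\in \Loc_\otimes(\Upsilon^{iso}\otimes S)$. The triangle $\mathscr{Y}^{iso}\otimes A\to A\to \Upsilon^{iso}\otimes A$ then shows that
\[
A\in \Loc_{\otimes}(S,\ \M_n : n\in\N).
\]
Since $A$ is compact, the Remark on $\langle S\rangle=\Loc_\otimes(S)\cap\cat{K}^c$ (applied to the compact set $S\cup\{\M_n\}$) gives $A\in\langle S,\M_n\rangle$. Note that this does not require any information about $\cat{K}^c_\infty$ beyond the identification of the primes from Theorem \ref{thm: spectrum of isotropic case}, namely $\langle\cone(\rho)^{iso}\rangle$ and $\langle\cone(r_0)^{iso}\rangle=\langle \cone(\rho)^{iso},\cone(\tau)^{iso}\rangle$. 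The second description is asserted in the definition of $\p_{\infty,2}$; if it needs justification, I would show that $\cone(\tau)^{iso}$ and $\cone(r_0)^{iso}$ generate the same ideal modulo $\cone(\rho)^{iso}$, using that $\tau$ and $r_0$ are related in $\mathbb{E}\mathrm{nd}(\Upsilon^{iso})$ modulo $\rho$ (the Bockstein relation, as in \cite{vishik2022isotropic}).

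Applying this with $S=\{\cone(\rho)\}$ and $S=\{\cone(\rho),\cone(\tau)\}$ gives
\[
\p_{\infty,1}=\langle \cone(\rho),\M_n : n\in\N\rangle,\qquad
\p_{\infty,2}=\langle \cone(\rho),\cone(\tau),\M_n : n\in\N\rangle.
\]
It therefore remains to show that $\M_n\in\langle \cone(\rho)\rangle$ for every $n\geq 1$. This is the main step. The approach is to use the Rost triangles: tensoring the first Rost triangle with $\cone(\rho)$, it suffices to see that $\mathcal{X}_n\otimes \cone(\rho)$ lies in the ideal, or more directly that $\M_n\otimes\cone(\rho)$ generates the same ideal as $\M_n$ while being built from $\cone(\rho)$.

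A cleaner route is the following. Since $\rho^n$ is the symbol of $Q_n$, the restriction $\rho|_{F(Q_n)}$ is such that $\rho^n$ vanishes on $\M_n$; that is, $\rho^n\otimes \M_n$ is zero, because $\M_n\otimes\M_n$ splits into Tate twists of $\M_n$ and $\{\rho^n\}$ dies on $\mathcal{X}_n$. An object $X$ on which $\rho^n$ acts as zero satisfies
\[
X\in\langle \cone(\rho^n)\otimes X\rangle\subseteq\langle\cone(\rho)\rangle,
\]
since $X$ is a summand of $\cone(\rho^n)\otimes X$, and $\cone(\rho^n)$ is an iterated extension of twists of $\cone(\rho)$ by the octahedral axiom. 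So the key verification is that $\rho^n\cdot \mathrm{id}_{\M_n}=0$. I would deduce this from $\M_n\cong \mathcal{X}_n\otimes\M_n$ together with the vanishing of $\rho^n$ in $\mathbb{E}\mathrm{nd}(\mathcal{X}_n)$. The latter vanishing follows from the identification of $\mathbb{E}\mathrm{nd}$ of $\mathcal{X}_\alpha$ modulo the kernel of $\alpha\cdot$, equivalently from the statement that $\rho^n$ vanishes over $F(Q_n)$ and from \cite[Theorem 2.3.5]{vishik1997integral}. This is the step I expect to be hardest to make precise. If it fails, the fallback is a direct induction on $n$ using the diamond for $\M_n$, expressing $\M_n$ via $\tilde{\mathcal{X}}_{n}$ and $\tilde{\M}_n$ and checking each piece modulo $\cone(\rho)$.
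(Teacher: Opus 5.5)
Your localisation step is correct. Since $\ker(\pi_\infty)=\Loc_{\otimes}(\M_n : n\in\N)$ contains $\mathscr{Y}^{iso}$, the triangle $\mathscr{Y}^{iso}\otimes A\to A\to \Upsilon^{iso}\otimes A$ together with $\langle S\rangle=\Loc_\otimes(S)\cap\cat{K}^c$ does give $\p_{\infty,1}=\langle \cone(\rho),\M_n : n\in\N\rangle$, and similarly for $\p_{\infty,2}$.

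The gap is in the step you call the main one. The identity $\rho^n\otimes\mathrm{id}_{\M_n}=0$ is false for every $n\geq 2$, and the route through $\mathcal{X}_n$ fails for every exponent.

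First, $\Hom(\mathcal{X}_n,\mathcal{X}_n(N)[N])\cong H^{N,N}(\mathcal{X}_n;\F_2)$. By Beilinson--Lichtenbaum, a \v{C}ech simplicial scheme has the mod $2$ motivic cohomology of the base field in bidegrees $p\leq q$. So this group is $\K^M_N(F)/2=\F_2\cdot\rho^N\neq 0$, and $\rho$ is never nilpotent on $\mathcal{X}_n$. Already for $n=1$, $\rho$ acts by zero on $\M_1=\M(\Spec\overline{F})$ but not on $\mathcal{X}_1$. The vanishing of $\rho^n$ over $F(Q_n)$ does not transfer to morphisms on $\mathcal{X}_n$, because \cite[Theorem 2.3.5]{vishik1997integral} is a statement about objects. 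Also, the formula involving $S^\alpha_*$ is for $\tilde{\mathcal{X}}_\alpha$, and over $F$ it gives $S^{\rho^n}_*=\F_2[\rho]$, so $\rho^n\neq 0$ there too.

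Second, composing $\rho^N\otimes\mathrm{id}_{\M_n}$ with $\M_n(N)[N]\to\T(N)[N]$ gives $\rho^N|_{\M_n}\in H^{N,N}(\M_n)$. The Rost triangle $\mathcal{X}_n\{d\}\to\M_n\to\mathcal{X}_n\to\mathcal{X}_n(d)[2d+1]$, with $d=2^{n-1}-1$, yields an exact sequence
\[H^{N-2d-1,N-d}(\mathcal{X}_n)\to H^{N,N}(\mathcal{X}_n)\to H^{N,N}(\M_n).\]
Its left term is zero for $N\leq 2^n-2$, being in negative cohomological degree. Hence $\rho^N\otimes\mathrm{id}_{\M_n}\neq 0$ for $N\le 2^n-2$, in particular for $N=n$. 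For instance, $\rho^2$ restricts non-trivially to $H^{2,2}(Q_2;\F_2)$ although it vanishes at the generic point of the conic. Your fallback (induction along the diamond, whose pieces $\mathcal{X}_n$ and $\tilde{\mathcal{X}}_n$ are not compact) is too vague to close this.

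Your strategy can be rescued with a larger exponent. Using the diagonal, $\rho^N\otimes\mathrm{id}_{\M(Q_n)}=((\rho^N\circ\pi)\otimes\mathrm{id})\circ\Delta$ with $\pi\colon\M(Q_n)\to\T$. Moreover, $H^{N,N}(Q_n;\F_2)=0$ for $N>2\dim Q_n$ by the coniveau spectral sequence. This holds because $Q_n$ has no $F$-points, so all its residue fields are non-real and have $\mathrm{cd}_2$ at most their transcendence degree. Since $\M_n$ is a summand of $\M(Q_n)$, your summand trick then gives $\M_n\in\langle\cone(\rho^N)\rangle\subseteq\langle\cone(\rho)\rangle$.

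The paper avoids this computation entirely. Every tt-ideal is radical, so $\langle S\rangle$ is the intersection of the primes containing $S$, and the complete list of primes is already known. Table \ref{tab:geometric_functors} shows that $\pi_k(\cone(\rho))$ has the unit as a summand for $k<\infty$, while $\pi_\infty(\cone(\rho))\neq 0$. So $\p_{\infty,1}\subset\p_{\infty,2}$ are the only primes containing $\cone(\rho)$, and $\M_n\in\langle\cone(\rho)\rangle$ comes out as a consequence rather than an input. Your route would have the merit of needing only $\Spc(\cat{K}^c_\infty)$ rather than the classification of all primes, but only once the nilpotence is proved correctly.
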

\begin{proof}
Since $\DQMgm(F; \F_2)$ is a rigid tt-category, in particular any tt-ideal is radical. This means that for any set of objects $S$, the tt-ideal generated by $S$ is equal to the intersection of all primes containing $S$.
\begin{itemize}
    \item Let $S=\{ \cone(\rho), \cone(\tau)\}$. The only prime containing $S$ is $\p_{\infty, 2}$, from which the claim follows.
    \item Let $S=\{\cone(\rho)\}$. The primes containing $S$ are $\p_{\infty,1}$ and $\p_{\infty,2}$. Since $\p_{\infty, 1} \subset \p_{\infty, 2}$, their intersection is $\p_{\infty,1}$. This proves the claim.
\end{itemize}
\end{proof}
\subsection{Generators of $\p_{j,0}$ for $1\leq j <  \infty$}
We shall begin with the case $j=1$.

\begin{cor}{(\cite[Proposition 2.18]{vishik2025balmer})}
\emph{The prime tt-ideal $\p_{1,0}$ is generated by}
\[\p_{1,0}=\langle \tilde{\M}_1 \rangle.\]
\end{cor}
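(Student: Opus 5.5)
Since $\DQMgm(F;\F_2)$ is rigid, every tt-ideal is radical, so $\langle \tilde{\M}_1\rangle$ equals the intersection of all primes containing $\tilde{\M}_1$. The strategy is the one used above for $\p_{\infty,1}$ and $\p_{\infty,2}$: identify exactly which primes $\p_{i,j}$ contain $\tilde{\M}_1$, and check that their intersection is $\p_{1,0}$. Since we already know all points of the spectrum (the injective and surjective map $\bigsqcup_n \Spc(\pi_n)$), this reduces to membership checks via Table \ref{tab:geometric_functors}.

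\textbf{Step 1: primes containing $\tilde{\M}_1$.} We have $\tilde{\M}_1\in\p_{i,j}$ if and only if $\pi_i(\tilde{\M}_1)$ lies in the corresponding prime of $\cat{K}_i^c$.
\begin{itemize}
\item For $i=1$: $\pi_1(\tilde{\M}_1)=0$, so $\tilde{\M}_1\in\p_{1,0}\subset\p_{1,1}$.
\item For $i=2$: $\pi_2(\tilde{\M}_1)=\tilde{\M}_1|_{F(Q_2)}$, which is nonzero and lies in $\langle\cone(r_0)\rangle$. So $\tilde{\M}_1\in \p_{2,1}$ but $\tilde{\M}_1\notin\p_{2,0}$.
\item For $3\le i<\infty$: the table gives $\pi_i(\tilde{\M}_1)\neq 0$. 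By Theorem \ref{thm: case m geq 3} and Proposition \ref{prop: case m geq 3 support of cone(r_m-1)}, any nonzero object generates a tt-ideal containing $\cone(r_{i-2})$. So $\tilde{\M}_1\in\p_{i,1}$ exactly when $\pi_i(\tilde{\M}_1)$ becomes zero after inverting $r_{i-2}$. Whatever the answer, it never lies in $\p_{i,0}$.
\item For $i=\infty$: $\pi_\infty(\tilde{\M}_1)\neq0$, so it is not in $\p_{\infty,0}$. It may lie in $\p_{\infty,1}$ or $\p_{\infty,2}$.
\end{itemize}

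\textbf{Step 2: intersect.} The set of primes containing $\tilde{\M}_1$ includes $\p_{1,0}$, and every other such prime is one of $\p_{1,1}$, $\p_{i,1}$ ($2\le i<\infty$) or $\p_{\infty,1}$, $\p_{\infty,2}$. So the remaining point is to show that $\p_{1,0}$ is contained in each of these, since then the intersection is exactly $\p_{1,0}$. We have $\p_{1,0}\subset\p_{1,1}$ by construction. For the others, the cleanest route is an alternative argument that avoids listing specialisations: $\p_{1,0}=\ker(\pi_1^c)=\ker(\mathcal{X}_1\otimes-)$ on compact objects. If $A\in\p_{1,0}$, then $A\cong\tilde{\mathcal{X}}_1\otimes A$. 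We then need $A\in\langle\tilde{\M}_1\rangle$, that is, compact objects killed by $\mathcal{X}_1$ lie in the thick tensor ideal of $\tilde{\M}_1$.

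\textbf{Main obstacle.} This last containment is the substance of \cite[Proposition 2.18]{vishik2025balmer}, which I would invoke directly. If I proved it myself, I would do so as follows:
\begin{itemize}
\item $\tilde{\M}_1$ fits in a triangle with $\tilde{\mathcal{X}}_1[-1]$ and $\tilde{\mathcal{X}}_1(1)[1]$ (the diamond for $n=1$, $d_0=0$). Note that $\tilde{\M}_1\cong\cone(\tilde{\mathcal{X}}_1\xrightarrow{\rho}\tilde{\mathcal{X}}_1(1)[1])$ up to shift.
\item Hence $\langle\tilde{\M}_1\rangle=\langle\tilde{\mathcal{X}}_1\otimes\cone(\rho)\rangle$.
\item For compact $A$ with $A\cong\tilde{\mathcal{X}}_1\otimes A$, the ring $\mathbb{E}\mathrm{nd}(\tilde{\mathcal{X}}_1)=\F_2[\rho,\rho^{-1},\ldots]$ makes $\rho$ act on $A$. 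Compactness of $A$ inside $\DM(F;\F_2)$ forces $\rho^N$ to act by zero on $A$ for some $N$.
\item Then $A$ is a summand of $A\otimes\cone(\rho^N)$, which lies in the ideal generated by $\tilde{\mathcal{X}}_1\otimes\cone(\rho)$, by the octahedral axiom.
\end{itemize}
The delicate point is the nilpotence of $\rho$ on compact $\tilde{\mathcal{X}}_1$-local objects.
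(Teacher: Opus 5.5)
\textbf{Gap: the proposal has no valid argument for the one non-trivial inclusion.} Your reduction is the right one, and Steps 1--2 can be dropped; the intersection route would need $\p_{1,0}\subseteq\p_{2,1}$, which is only available once the generators are known. Everything comes down to showing that a compact $A$ with $\mathcal{X}_1\otimes A=0$ lies in $\langle\tilde{\M}_1\rangle$, and there your sketch uses the wrong endomorphism.
\begin{itemize}
\item For $n=1$ one has $d_0=0$, so the reduced diamond reads $\tilde{\mathcal{X}}_1\to\tilde{\mathcal{X}}_1[-1]\to\tilde{\M}_1\to\tilde{\mathcal{X}}_1[1]$. Its first map is $r_0$, of bidegree $(0)[-1]$. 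No twist $(1)$ occurs, and $\tilde{\M}_1\cong\cone(r_0)$ up to shift, not $\cone(\rho)$.
\item Over a real closed field $\mathbb{E}\mathrm{nd}(\tilde{\mathcal{X}}_1)=\F_2[\rho,r_0]$, so $\rho$ is not invertible. Had it been, your own bullets would force $A\cong\tilde{\mathcal{X}}_1\otimes A=0$.
\item Most seriously, the nilpotence you flag as the delicate point is false. Take the compact object $\tilde{\M}_1$, which satisfies $\mathcal{X}_1\otimes\tilde{\M}_1=0$. Since $r_0$ is a non-zero-divisor, the bigraded maps from $\tilde{\mathcal{X}}_1$ into $\cone(r_0)$ form, up to a shift, $\F_2[\rho,r_0]/(r_0)\cong\F_2[\rho]$. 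Multiplication by $\rho^N$ is non-zero there for every $N$, so $\rho^N\otimes\mathrm{id}_{\tilde{\M}_1}\neq0$.
\item Equivalently, your argument would give $\tilde{\M}_1\in\langle\cone(\rho)\rangle\subseteq\p_{\infty,1}$. But $\pi_\infty(\tilde{\M}_1)$ is, up to shift, $\cone(r_0)^{iso}$, which is not in $\langle\cone(\rho)^{iso}\rangle$ by Theorem~\ref{thm: spectrum of isotropic case}. So the sketch would prove the false inclusion $\p_{1,0}\subseteq\p_{\infty,1}$.
\end{itemize}

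\textbf{What the paper does instead.} It runs your last bullet with $r_0$ in place of $\rho$. For $U\in\p_{1,0}$ one has $U\cong\tilde{\mathcal{X}}_1\otimes U$, and one considers $f\colon U\cong\tilde{\mathcal{X}}_1\otimes U\xrightarrow{r_0\otimes\mathrm{id}_U}\tilde{\mathcal{X}}_1\otimes U[-1]\cong U[-1]$.

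In the paper, \cite[Proposition 2.18]{vishik2025balmer} is used only to conclude that $f$ is $\otimes$-nilpotent, because $U$ is compact and vanishes under étale realisation. The passage from that to $U\in\langle\tilde{\M}_1\rangle$ is the actual content of the corollary, so ``invoking it directly'' skips exactly the step that needs an argument.

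That step is then formal:
\begin{itemize}
\item $\cone(f)\cong U\otimes\cone(r_0)$;
\item for $n$ large, $U^{\otimes n}$ is, up to shift, a summand of $\cone(f^{\otimes n})\in\langle\cone(f)\rangle\subseteq\langle\cone(r_0)\rangle=\langle\tilde{\M}_1\rangle$;
\item radicality of tt-ideals in the rigid category $\DQMgm(F;\F_2)$ gives $U\in\langle\tilde{\M}_1\rangle$.
\end{itemize}
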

\begin{proof}
As discussed in Remark \ref{remark: generators case j=1}, the inclusion $ \langle \tilde{\M}_1 \rangle \subseteq \p_{1,0}$ is clear. For the other inclusion, suppose that $U \in \p_{1,0}$, then $\mathcal{X}_1 \otimes U\cong 0$. This implies that $\tilde{\mathcal{X}}_1 \otimes U \cong U$. Consider the following composition
\[f:U \cong \tilde{\mathcal{X}}_1 \otimes U \xrightarrow{r_0 \otimes id_U}\tilde{\mathcal{X}}_1 \otimes U[-1] \cong U[-1].\]
Since $U$ is a compact object which disappears in étale realisation, then by \cite[Proposition 2.18]{vishik2025balmer}, $f$ is $\otimes$-nilpotent. Moreover, $\cone(f) \cong U \otimes \cone(r_0)$, which implies that for $n$ sufficiently large, $U^{\otimes n}$ is (up to shift) a summand of $\cone(f^{\otimes n})$. Finally, since every tt-ideal is radical, we deduce that
\[U\in \langle \cone(f^{\otimes n}) \rangle \subseteq \langle \cone(f) \rangle \subseteq \langle \cone(r_0) \rangle=\langle \tilde{\M}_1 \rangle. \]
\end{proof}
In the following proposition, we describe the generators of the remaining primes in the top layer of the spectrum.

\begin{prop}\label{prop: generators of closed points}
\emph{For $2 \leq j < \infty$, the prime tt-ideals $\p_{j,0}$ are generated by}
\[\p_{j,0} = \langle \M_{j-1}, \tilde{\M}_j \rangle.\]
\end{prop}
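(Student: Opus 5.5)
The inclusion $\langle \M_{j-1}, \tilde{\M}_j \rangle \subseteq \p_{j,0}$ is already recorded in Remarks \ref{remark: generators case j=2} and \ref{remark: generators 3 leq j le infty}. For the reverse inclusion I will imitate the proof of the Corollary for $\p_{1,0}$, replacing $\tilde{\mathcal{X}}_1$ and $r_0$ by $\tilde{\mathcal{X}}_{j-1}$ and $r_{j-1}$, after first using $\M_{j-1}$ to reduce to objects on which $\mathcal{X}_{j-1}$ acts by zero.

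Let $U \in \p_{j,0}$, so $\pi_j(U)=0$. By the kernel description in the proof of Theorem \ref{theorem: joinlty conservative geo functors}, this means $\tilde{\mathcal{X}}_{j-1}\otimes \mathcal{X}_j \otimes U \cong 0$ (with $\tilde{\mathcal{X}}_1$ when $j=2$). I will work in the big category and use the idempotent triangle of $\mathcal{X}_{j-1}$. Tensoring it with $U$ gives a triangle
\[
\mathcal{X}_{j-1}\otimes U \to U \to \tilde{\mathcal{X}}_{j-1}\otimes U.
\]
The first term lies in $\Loc_\otimes(\M_{j-1})$, since $\mathcal{X}_{j-1}$ is built from $\M_{j-1}$. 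Because $\mathcal{X}_{j-1} \twodom \mathcal{X}_j$, we have $\tilde{\mathcal{X}}_{j-1}\otimes U \cong \tilde{\mathcal{X}}_{j-1}\otimes\tilde{\mathcal{X}}_j\otimes U$. I would like this to lie in $\Loc_\otimes(\tilde{\M}_j)$. My plan is to prove that the compact object $U$ lies in the radical ideal $\langle \M_{j-1}, \tilde{\M}_j\rangle$ by a nilpotence argument, as in the $j=1$ case.

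Concretely, I consider
\[
f\colon \tilde{\mathcal{X}}_j \otimes U \xrightarrow{\,r_{j-1}\otimes \mathrm{id}_U\,} \tilde{\mathcal{X}}_j\otimes U(-d_{j-1})[-2d_{j-1}-1].
\]
Its cone is $\cone(r_{j-1})\otimes U$, and the diamond of triangles identifies $\cone(r_{j-1})$ on $\tilde{\mathcal{X}}_j$ with $\tilde{\M}_j$ up to twist. I then need to show that $f$ becomes $\otimes$-nilpotent modulo $\langle \M_{j-1}\rangle$, that is, after applying $\tilde{\mathcal{X}}_{j-1}\otimes-$. On the factor $\tilde{\mathcal{X}}_{j-1}$, the class $r_{j-1}$ restricted to $\tilde{\mathcal{X}}_{j-1}\otimes\tilde{\mathcal{X}}_j$ is expressible through the relation $r_{j-2}^2=\rho\, r_{j-1}$. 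Moreover, $U$ vanishes after base change to $F(Q_j)$ on the $\tilde{\mathcal{X}}_{j-1}$-part. This is the analogue of ``disappearing in étale realisation'', and I would invoke the generalisation of \cite[Proposition 2.18]{vishik2025balmer}: a compact object killed by $\mathcal{X}_j$-restriction has $r_{j-1}$ acting $\otimes$-nilpotently.

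Once the nilpotence is established, a large tensor power $U^{\otimes n}$ is, up to the $\M_{j-1}$-part, a summand of $\cone(f^{\otimes n})$. Radicality of tt-ideals then gives $U\in\langle \M_{j-1},\tilde{\M}_j\rangle$.

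The main obstacle is this nilpotence step. If the direct argument fails, a fallback is the radicality trick used for $\p_{\infty,1}$ and $\p_{\infty,2}$. The ideal $\langle \M_{j-1},\tilde{\M}_j\rangle$ is the intersection of the primes containing it, and every prime is some $\p_{i,k}$ by surjectivity and injectivity. By Table \ref{tab:geometric_functors}, the primes containing both $\M_{j-1}$ and $\tilde{\M}_j$ are:
\begin{itemize}
\item not $\p_{i,k}$ for $i\le j-1$, since these omit $\M_{j-1}$;
\item not $\p_{i,k}$ for $i>j$ or $i=\infty$, since there $\tilde{\M}_j$ maps to a nonzero object, and I must check it is not contained in the relevant primes of $\cat{K}_i^c$;
\item $\p_{j,0}$ and $\p_{j,1}$, where $\tilde{\M}_j\mapsto 0$.
\end{itemize}
The delicate case is $i>j$ with $k=1$, and $i=\infty$ with $k\ge1$. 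There one must check that $\pi_i(\tilde{\M}_j)\notin\langle\cone(r_{i-2})\rangle$, respectively $\notin\langle \cone(r_0)^{iso}\rangle$. This can be done by computing supports in the explicit spectra of Section 4: these are Sierpiński or three-point spaces, and the support of a nonzero object is determined by whether it survives the localisations $\cat{M}$ and $\cat{N}$ computed there, for instance because $\tilde{\mathcal{X}}_j$ maps nontrivially to $\tilde{\mathcal{D}}_{r_{i-2}}$. If this holds, the intersection of the surviving primes is $\p_{j,0}$, which proves the claim.
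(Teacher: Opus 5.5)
You have the right reduction, and it matches the paper's. Using the triangle for $\mathcal{X}_{j-1}$, you reduce to showing that $r_{j-1}$ acts $\otimes$-nilpotently on $\tilde{\mathcal{X}}_j\otimes U$. (The isomorphism $\tilde{\mathcal{X}}_{j-1}\otimes U\cong\tilde{\mathcal{X}}_{j}\otimes U$ comes from the hypothesis $\tilde{\mathcal{X}}_{j-1}\otimes\mathcal{X}_j\otimes U\cong 0$, not from $\mathcal{X}_{j-1}\twodom\mathcal{X}_j$ alone.) But this nilpotence is the whole content of the proposition, and you do not prove it. You appeal to a ``generalisation'' of \cite[Proposition 2.18]{vishik2025balmer} that is not available. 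The relation $r_{j-2}^2=\rho\, r_{j-1}$ gives no control over $r_{j-1}$, since $\rho$ is not invertible. Working ``modulo $\langle\M_{j-1}\rangle$'' by applying $\tilde{\mathcal{X}}_{j-1}\otimes-$ changes nothing, because $\tilde{\mathcal{X}}_{j-1}\otimes\tilde{\mathcal{X}}_j\cong\tilde{\mathcal{X}}_j$.

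The missing idea is to use the isomorphism above in the other direction. Regard $r_{j-1}$ as a map $\tilde{\mathcal{X}}_{j-1}\otimes U\to\tilde{\mathcal{X}}_{j-1}\otimes U(-d_{j-1})[-2d_{j-1}-1]$, and prove
$\Hom(\tilde{\mathcal{X}}_{j-1}\otimes U,\tilde{\mathcal{X}}_{j-1}\otimes U(-Kd_{j-1})[-2Kd_{j-1}-K])=0$ for $K\gg 0$.
By duality and the Chow weight structure on $\DQMgm(F;\F_2)$ (whose heart consists of pure Tate and pure Rost motives), $\tilde{\mathcal{X}}_{j-1}\otimes U^{\vee}\otimes U$ is a finite extension of twists of $\tilde{\mathcal{X}}_{j-1}$ and of $\tilde{\mathcal{X}}_{j-1}\otimes\M_l$ with $l\geq j$. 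For the Tate pieces, $\mathbb{E}\mathrm{nd}(\tilde{\mathcal{X}}_{j-1})$ is supported in a wedge bounded by a line of slope $2+1/d_{j-2}$, since its top generator is $r_{j-2}$. Powers of $r_{j-1}$ move along slope $2+1/d_{j-1}<2+1/d_{j-2}$, so the target bidegree eventually leaves the support; for $j=2$ the weight simply becomes negative. The Rost pieces reduce to the same computation via the triangles for $\mathcal{X}_{j-1}$ and for $\M_l$.

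Your fallback does not rescue the argument, because your list of primes containing $\M_{j-1}$ and $\tilde{\M}_j$ is wrong. Besides $\p_{j,0}$ and $\p_{j,1}$, two further primes contain both:
\begin{enumerate}
\item $\p_{j+1,1}$. Here $\pi_{j+1}(\M_{j-1})=0$, and $\pi_{j+1}(\tilde{\M}_j)=\tilde{\M}_j|_{F(Q_{j+1})}$ is, up to twist, $\cone(r_{j-1})$. That is exactly the generator of the prime of $\cat{K}^c_{j+1}$ whose pullback is $\p_{j+1,1}$.
\item $\p_{\infty,2}$. Here $\M_{j-1}\in\p_{\infty,0}\subseteq\p_{\infty,2}$, and $\pi_\infty(\tilde{\M}_j)$ is, up to twist, $\cone(r_{j-1})^{iso}$. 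This dies in $\cat{N}_\infty$ because $r_{j-1}=\rho^{-(2^{j-1}-1)}r_0^{2^{j-1}}$ is invertible there.
\end{enumerate}
So radicality only yields $\langle\M_{j-1},\tilde{\M}_j\rangle=\p_{j,0}\cap\p_{j+1,1}\cap\p_{\infty,2}$. To conclude you would need $\p_{j,0}\subseteq\p_{j+1,1}$ and $\p_{j,0}\subseteq\p_{\infty,2}$. The inclusion $\p_{\infty,1}\subset\p_{\infty,2}$ comes for free because both are pulled back along the same functor, but these primes come from different $\pi_n$, so nothing is known a priori. In the paper these specialisation relations are deduced from the present proposition. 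Proving $\p_{j,0}\subseteq\p_{j+1,1}$ directly would require showing that $r_{j-1}$ becomes nilpotent on $\tilde{\mathcal{X}}_j\otimes U$ after base change to $F(Q_{j+1})$, which is essentially the same nilpotence missing from your main route.
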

\begin{proof}
By Remark \ref{remark: generators case j=2} and Remark \ref{remark: generators 3 leq j le infty}, the inclusion $\langle \M_{j-1}, \tilde{\M}_j \rangle \subseteq \p_{j,0}$ is clear. For the other inclusion, if $A \in \p_{j,0}$, then we know that $\tilde{\mathcal{X}}_{j-1} \otimes \mathcal{X}_j \otimes A \cong 0$, which implies that $\mathcal{X}_{j-1} \otimes A \cong \mathcal{X}_{j} \otimes A$, and $\tilde{\mathcal{X}}_{j-1} \otimes A \cong \tilde{\mathcal{X}}_{j} \otimes A$.  Since  $\mathcal{X}_{j-1} \otimes A \in \mathrm{Loc}_{\otimes}(\M_{j-1})$, then from the following triangle
\[\begin{tikzcd}[ampersand replacement=\&,sep=small]
	{\mathcal{X}_{j} \otimes A} \& A \& {\tilde{\mathcal{X}}_{j} \otimes A} \& {\mathcal{X}_{j} \otimes A[1]} \\
	{\mathcal{X}_{j-1} \otimes A} \&\& {\tilde{\mathcal{X}}_{j-1} \otimes A}
	\arrow[from=1-1, to=1-2]
	\arrow["\cong"{marking, allow upside down}, draw=none, from=1-1, to=2-1]
	\arrow[from=1-2, to=1-3]
	\arrow[from=1-3, to=1-4]
	\arrow["\cong"{marking, allow upside down}, draw=none, from=1-3, to=2-3]
\end{tikzcd}\]
we deduce that it is sufficient to show that $\tilde{\mathcal{X}}_{j} \otimes A \in \mathrm{Loc}_{\otimes}(\tilde{\M}_{j})$. Recall that $\tilde{\M}_{j}$ is isomorphic up to shift to $\cone( \tilde{\mathcal{X}}_{j} \xrightarrow{r_{j-1}} \tilde{\mathcal{X}}_{j}(-d_{j-1})[-2d_{j-1}-1])$; in particular, it suffices to show that $r_{j-1}$ is $\otimes$-nilpotent on $A$. Under the equivalences discussed above, we will do this by proving the stronger statement that there is a sufficiently large positive integer $K$ such that 
\[\Hom(\tilde{\mathcal{X}}_{j-1} \otimes A, \tilde{\mathcal{X}}_{j-1} \otimes A(-Kd_{j-1})[-2Kd_{j-1}-K] )=0.\]
Moreover, taking $C= A^{\vee} \otimes A$, the above is equivalent by duality to showing that 
\[\Hom(\tilde{\mathcal{X}}_{j-1}, \tilde{\mathcal{X}}_{j-1} \otimes C(-Kd_{j-1})[-2Kd_{j-1}-K] )=0\]
for $K$ sufficiently large. We proceed by recalling that the Chow weight structure on $\DMgm(F; \F_2)$ restricts to $\DQMgm(F; \F_2)$ \cite{bachmann2017invertibility}, and its heart consists of direct sums of pure Tate motives and pure Rost motives. Consequently, since $C \in \DQMgm(F; \F_2)^{\geq m}$ for some integer $m$, then $\tilde{{\mathcal{X}}}_{j-1} \otimes C$ is an extension of finitely many objects of the form $\tilde{{\mathcal{X}}}_{j-1}(x)[y]$ and $\tilde{{\mathcal{X}}}_{j-1} \otimes \M_l(x)[y]$ for $l \geq j$ and $y-2x \geq m$; this means that it suffices to show that for each shift $(x)[y]$ as above there is some sufficiently large $k$ such that 
\[(1) \quad \Hom(\tilde{\mathcal{X}}_{j-1}, \tilde{\mathcal{X}}_{j-1}(x-kd_{j-1})[y-2kd_{j-1}-k] )=0\]
and
\[(2) \quad \Hom(\tilde{\mathcal{X}}_{j-1}, \tilde{\mathcal{X}}_{j-1}\otimes \M_l (x-kd_{j-1})[y-2kd_{j-1}-k] )=0.\]
\textbf{Case} (1). If $j-1=1$, we know that the group vanishes as soon as $(x-kd_{j-1})$ becomes negative. For $j-1 \geq 2$, we proceed by a slope argument. We know that  $\mathbb{E}nd(\tilde{\mathcal{X}}_{j-1})$ is supported in a region of $\Z^2$ which is bounded by a line of slope $ 2 + 1/d_{j-2}$. However, as $k$ increases, the bi-degree of the target moves along a trajectory of slope $2 + 1/d_{j-1}$. Since $d_{j-1} > d_{j-2}$, then the target bi-degree will exit the region where $\mathbb{E}nd(\tilde{\mathcal{X}}_{j-1})$ is supported, ensuring that the group vanishes for sufficiently large $k$.

\textbf{Case} (2). It suffices to consider the vanishing of
\[(2') \quad \Hom(\T, \tilde{\mathcal{X}}_{j-1}\otimes \M_l (x-kd_{j-1})[y-2kd_{j-1}-k] ).\]
Tensoring the triangle $\mathcal{X}_{j-1} \to \T \to \tilde{\mathcal{X}}_{j-1} \to \mathcal{X}_{j-1}[1]$ by $\M_l$, and considering the long exact sequence induced by $\Hom(\T,-)$, it suffices to study the vanishing of the following groups.
\begin{enumerate}
    \item[(I)] $\Hom(\T, \M_l(x-kd_{j-1})[y-2kd_{j-1}-k])$.
    
    This vanishes for large $k$ because $(x-kd_{j-1})$ becomes negative.
    \item[(II)] $\Hom(\T, \mathcal{X}_{j-1} \otimes \M_l(x-kd_{j-1})[y-2kd_{j-1}-k+1])$. 
    
    Since $l \geq j$, we have $\mathcal{X}_{j-1} \otimes \mathcal{X}_l \cong \mathcal{X}_{j-1}$. Consequently, tensoring the defining triangle of $\M_l$ with $\mathcal{X}_{j-1}$ implies that $\mathcal{X}_{j-1} \otimes \M_l$ is an extension of 
    \[\mathcal{X}_{j-1}(d_{l-1}+\alpha_k)[2d_{l-1}+\beta_k] \, \text{ and  } \, \mathcal{X}_{j-1}(\alpha_k)[\beta_k].\]

    where $\alpha_k=x-kd_{j-1}$ and $\beta_k=y-2kd_{j-1}-k+1$.
    By shifting appropriately the idempotent triangle for $\mathcal{X}_{j-1}$, and considering the long exact sequence induced by $\Hom(\T,-)$, we may further reduce the vanishing of (II) to the vanishing of the following groups.
    \begin{itemize}
        \item $\Hom(\T, \T(d_{l-1}+\alpha_k)[2d_{l-1}+\beta_k])$ and $\Hom(\T, \T(\alpha_k)[\beta_k])$.

        These both vanish for large $k$ since $d_{l-1}+\alpha_k$ (and hence $\alpha_k$) becomes negative.
    \item $\Hom(\T,\tilde{\mathcal{X}}_{j-1}(d_{l-1}+\alpha_k)[2d_{l-1}+\beta_k +1])$ and $\Hom(\T,\tilde{\mathcal{X}}_{j-1}(\alpha_k)[\beta_k +1])$.

    These both vanish by the same arguments in Case (1).
    \end{itemize}
\end{enumerate}
This concludes the proof.
\end{proof}
Using the generators obtained for the points in the top layer of the spectrum, 
we may update the picture of $\Spc(\DQMgm(F; \F_2))$,
 \[\begin{tikzcd}[ampersand replacement=\&, sep=tiny]
	{\mathfrak{p}_{\infty,0}} \&\& {\mathfrak{p}_{k+1,0}} \& {\mathfrak{p}_{k,0}} \& {} \& {\mathfrak{p}_{3,0}} \& {\mathfrak{p}_{2,0}} \& {\mathfrak{p}_{1,0}} \\
	\& \dots \&\&\& \dots \\
	\& {} \& {\mathfrak{p}_{k+1,1}} \& {\mathfrak{p}_{k,1}} \& {} \& {\mathfrak{p}_{3,1}} \& {\mathfrak{p}_{2,1}} \& {\mathfrak{p}_{1,1}} \\
	{\mathfrak{p}_{\infty,1}} \\
	\&\&\&\& {\mathfrak{p}_{\infty,2}}
	\arrow[no head, from=1-1, to=4-1]
	\arrow[color={rgb,255:red,255;green,51;blue,75}, between={0}{0.4}, no head, from=1-3, to=3-2]
	\arrow[no head, from=1-3, to=3-3]
	\arrow[color={rgb,255:red,255;green,51;blue,75}, no head, from=1-4, to=3-3]
	\arrow[no head, from=1-4, to=3-4]
	\arrow[color={rgb,255:red,255;green,51;blue,75}, between={0}{0.4}, no head, from=1-6, to=3-5]
	\arrow[no head, from=1-6, to=3-6]
	\arrow[color={rgb,255:red,255;green,51;blue,75}, no head, from=1-7, to=3-6]
	\arrow[no head, from=1-7, to=3-7]
	\arrow[color={rgb,255:red,255;green,51;blue,75}, no head, from=1-8, to=3-7]
	\arrow[no head, from=1-8, to=3-8]
	\arrow[color={rgb,255:red,255;green,51;blue,75}, between={0}{0.4}, no head, from=3-4, to=1-5]
	\arrow[shift right=2, no head, from=4-1, to=5-5]
	\arrow[shift right=2, no head, from=5-5, to=3-8]
\end{tikzcd}\]
where the red lines denote the specialisation relations that can be easily deduced by inspection.
\subsection{Generators of $\p_{j,1}$ for $2 \leq j <  \infty$}
For $2 \leq j < \infty$, we shall denote
\[\tilde{\mathcal{D}}_{r_{j-2}}^{F(Q_j)}=\hocolim_{n \in \N}\big(\tilde{\mathcal{X}}_{\rho^{j-1}} \xrightarrow{r_{j-2}} \tilde{\mathcal{X}}_{\rho^{j-1}}(-d_{j-2})[-2d_{j-2}-1] \xrightarrow{r_{j-2}(-d_{j-2})[-2d_{j-2}-1]} \dots)\]
taken in $\DQM(F(Q_j); \F_2)$. Recall from Section \ref{section: Spc of Kc2} and Section \ref{section: Spc of Kcm} that for $2 \leq j < \infty$ 
\[\p_{j,1}=\ker(\tilde{\mathcal{D}}_{r_{j-2}}^{F(Q_j)} \otimes (\tilde{\mathcal{X}}_{j-1} \otimes -)|_{F(Q_j)}) \cap \DQMgm(F; \F_2).\]
Now, we may define in $\DQM(F; \F_2)$ 
\[\tilde{\mathcal{D}}_{r_{j-2}}=\hocolim_{n \in \N}\big(\tilde{\mathcal{X}}_{{j-1}}\xrightarrow{r_{j-2}} \tilde{\mathcal{X}}_{{j-1}}(-d_{j-2})[-2d_{j-2}-1]  \xrightarrow{r_{j-2}(-d_{j-2})[-2d_{j-2}-1]} \dots),\]
and it is clear that under base-change $\DQM(F; \F_2) \to \DQM(F(Q_j); \F_2)$, $\tilde{\mathcal{D}}_{r_{j-2}} \mapsto \tilde{\mathcal{D}}_{r_{j-2}}^{F(Q_j)}$. In particular, this gives us that 
\[\p_{j,1}=\ker(\tilde{\mathcal{D}}_{r_{j-2}} \otimes \mathcal{X}_{j} \otimes -) \cap \DQMgm(F; \F_2).\]
\begin{prop}
\emph{For $2\leq j <  \infty$, the prime tt-ideal $\p_{j,1}$ is generated by}
\[\p_{j,1}=\langle \M_{j-1}, \tilde{\M}_{j-1} \rangle\]
\end{prop}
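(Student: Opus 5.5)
The inclusion $\langle \M_{j-1}, \tilde{\M}_{j-1}\rangle \subseteq \p_{j,1}$ is Remark \ref{remark: generators case j=2} (for $j=2$) and Remark \ref{remark: generators 3 leq j le infty} (for $j\geq 3$). For the reverse inclusion I would follow the proof of Proposition \ref{prop: generators of closed points}, using the description
\[\p_{j,1}=\ker(\tilde{\mathcal{D}}_{r_{j-2}} \otimes \mathcal{X}_{j} \otimes -) \cap \DQMgm(F; \F_2).\]

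Take $A \in \p_{j,1}$. First, split $A$ by the idempotent triangle $\mathcal{X}_{j-1}\otimes A \to A \to \tilde{\mathcal{X}}_{j-1}\otimes A$. The first term lies in $\Loc_\otimes(\M_{j-1})$. So it suffices to show $\tilde{\mathcal{X}}_{j-1}\otimes A \in \Loc_\otimes(\M_{j-1},\tilde{\M}_{j-1})$; one then intersects with compacts using the remark after the definition of $\Loc_\otimes$.

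Second, split $\tilde{\mathcal{X}}_{j-1}\otimes A$ by the triangle for $\mathcal{X}_j$. Since $\mathcal{X}_{j-1}\twodom\mathcal{X}_j$ and $\M_j$ becomes $\mathcal{X}_j\oplus\mathcal{X}_j\{d_{j-1}\}$ after tensoring with $\tilde{\mathcal{X}}_{j-1}$, the piece $\tilde{\mathcal{X}}_{j-1}\otimes\tilde{\mathcal{X}}_j\otimes A=\tilde{\mathcal{X}}_j\otimes A$ lies in $\Loc_\otimes(\mathcal{X}_{j-1})\subseteq\Loc_\otimes(\M_{j-1})$. Here I use that $\tilde{\mathcal{X}}_j$ is an extension built from $\tilde{\mathcal{X}}_{j-1}$-objects killed by $\mathcal{X}_j$, or more simply that $\tilde{\mathcal{X}}_j \otimes \mathcal{X}_{j-1}$ is controlled by $\M_{j-1}$. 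So it suffices to treat $B:=\tilde{\mathcal{X}}_{j-1}\otimes\mathcal{X}_j\otimes A$.

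Third, apply the hypothesis $\tilde{\mathcal{D}}_{r_{j-2}}\otimes B=0$. It gives $B\cong\mathcal{D}_{r_{j-2}}\otimes B$, and
\[\mathcal{D}_{r_{j-2}}\cong\hocolim_n \cone\bigl(\tilde{\mathcal{X}}_{j-1}\xrightarrow{r_{j-2}^n}\tilde{\mathcal{X}}_{j-1}(-nd_{j-2})[-2nd_{j-2}-n]\bigr)[-1],\]
which lies in $\Loc_\otimes(\cone(r_{j-2}))$. Up to shift, $\cone(r_{j-2})$ on $\tilde{\mathcal{X}}_{j-1}$ is $\tilde{\M}_{j-1}$, by the second Rost triangle. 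Hence $B\in\Loc_\otimes(\tilde{\M}_{j-1})$, and the three steps together give the reverse inclusion.

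An alternative route, which I expect to be the actual argument, is a radicality argument like the one used for $\p_{\infty,1}$ and $\p_{\infty,2}$. Compute which $\p_{i,j'}$ contain both $\M_{j-1}$ and $\tilde{\M}_{j-1}$, using Table \ref{tab:geometric_functors} and the generators already found. The primes containing $\M_{j-1}$ are $\p_{i,\ast}$ for $i\geq j$ and $i=\infty$, since $\pi_i(\M_{j-1})\neq0$ and generates for $i\leq j-1$. Among these, $\tilde{\M}_{j-1}$ is killed by $\pi_j$ only into the ideal generated by $\cone(r_{j-2})$, so it lies in $\p_{j,1}$ but not $\p_{j,0}$. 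The delicate point is to show $\tilde{\M}_{j-1}\notin\p_{i,0},\p_{i,1}$ for $i>j$ and $\tilde{\M}_{j-1}\notin\p_{\infty,0},\p_{\infty,1}$. For $\infty$ this holds because $\tilde{\M}^{iso}_{j-1}$ is nonzero and not in $\langle\cone(\rho)\rangle$; note that $\cone(\rho)\in\langle\cone(r_0)\rangle$ but not conversely, so I must check $\tilde{\M}_{j-1}$'s image is not in the middle prime. For $i>j$ I would use that $\pi_i(\tilde{\M}_{j-1})$ is nonzero and not in $\langle\cone(r_{i-2})\rangle$.

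If the primes containing both generators are exactly $\p_{j,1}$ and $\p_{\infty,2}$, together with whatever lies above $\p_{j,1}$, then the intersection is $\p_{j,1}$ provided $\p_{j,1}\subseteq\p_{\infty,2}$. The main obstacle is exactly this support computation: deciding whether $\pi_i(\tilde{\M}_{j-1})$ lies in the nonzero prime of $\cat{K}_i^c$ for $i>j$. If it does, those primes contain $\p_{j,1}$ and so are harmless to the intersection. I would attempt the direct localisation argument first and fall back on the support computation only to check the shift identification $\cone(r_{j-2})\simeq\tilde{\M}_{j-1}$.
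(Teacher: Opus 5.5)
There is a genuine gap. Your Step~2 is false, and it is the step that carries the entire content of the proposition.

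You note yourself that $\tilde{\mathcal{X}}_{j-1}\otimes\tilde{\mathcal{X}}_j\cong\tilde{\mathcal{X}}_j$. Also $\Loc_\otimes(\M_{j-1})=\ker(\tilde{\mathcal{X}}_{j-1}\otimes -)$. Hence your claim $\tilde{\mathcal{X}}_j\otimes A\in\Loc_\otimes(\M_{j-1})$ is equivalent to $\tilde{\mathcal{X}}_j\otimes A=0$. Indeed $\mathcal{X}_{j-1}\otimes\tilde{\mathcal{X}}_j=0$, so $\tilde{\mathcal{X}}_j$ is \emph{orthogonal} to $\M_{j-1}$, not built from it.

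This already fails for $A=\tilde{\M}_{j-1}\in\p_{j,1}$. Since $\Upsilon^{iso}\otimes\mathcal{X}_j=0$, one gets $\pi_\infty(\tilde{\mathcal{X}}_j\otimes\tilde{\M}_{j-1})\cong\tilde{\M}^{iso}_{j-1}\neq 0$.

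Your Steps~1 and~3 are correct; Step~3 is exactly how the paper uses $\mathcal{D}_{r_{j-2}}\in\Loc_\otimes(\tilde{\M}_{j-1})$. But the hypothesis $A\in\p_{j,1}$ only controls the $\mathcal{X}_j$-part of $\tilde{\mathcal{X}}_{j-1}\otimes A$. The $\tilde{\mathcal{X}}_j$-part is the whole difficulty, and it needs compactness of $A$, which you never use.

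Compactness really is needed. The non-compact object $A=\tilde{\mathcal{X}}_j$ satisfies $\mathcal{X}_j\otimes A=0$. Your argument would give $\pi_\infty(A)\cong\Upsilon^{iso}\in\Loc_\otimes(\cone(r_{j-2})^{iso})$. That would force $r_{j-2}$ to be nilpotent in $\mathbb{E}\mathrm{nd}(\Upsilon^{iso})$, which it is not.

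The missing idea is the paper's. Reduce, as you can, to showing that $\tilde{\mathcal{D}}_{r_{j-2}}\otimes\tilde{\mathcal{X}}_j\otimes A\cong\tilde{\mathcal{D}}_{r_{j-2}}\otimes A$ lies in $\Loc_\otimes(\tilde{\M}_j)=\Loc_\otimes(\cone(r_{j-1}))$. Then prove that $r_{j-1}$ is $\otimes$-nilpotent on this object:
\begin{itemize}
\item By duality with $C=A^\vee\otimes A$, this becomes the vanishing of $\Hom(\tilde{\mathcal{D}}_{r_{j-2}},\tilde{\mathcal{D}}_{r_{j-2}}\otimes C(-Kd_{j-1})[-2Kd_{j-1}-K])$ for $K\gg 0$.
\item The Chow weight structure on $\DQMgm(F;\F_2)$ (this is where compactness enters) writes $\tilde{\mathcal{D}}_{r_{j-2}}\otimes C$ as a finite extension of twists of $\tilde{\mathcal{D}}_{r_{j-2}}$ and $\tilde{\mathcal{D}}_{r_{j-2}}\otimes\M_l$, with $l\geq j$ and weights bounded below.
\item A slope comparison, $2+1/d_{j-1}$ against $2+1/d_{j-2}$, in $\mathbb{E}\mathrm{nd}(\tilde{\mathcal{X}}_{j-1})[r_{j-2}^{-1}]$ then gives the vanishing.
\end{itemize}

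Your radicality route does not close the gap either. The membership computations are the easy part:
\begin{itemize}
\item For $j<i<\infty$, $\pi_i(\tilde{\M}_{j-1})$ is, up to shift, $\cone(r_{j-2})$ in $\cat{K}_i$. There $r_{j-2}^2=0$ because $\rho=0$ in $S^\alpha_*$, so this object generates $\cat{K}_i^c$ and lies in no prime.
\item At $\infty$, $\cone(r_{j-2})^{iso}$ survives inverting $\rho$, where $r_{j-2}=\rho^{1-2^{j-2}}r_0^{2^{j-2}}$, but dies after inverting $r_0$.
\end{itemize}
So the primes containing both generators are exactly $\p_{j,1}$ and $\p_{\infty,2}$. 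Radicality then gives only $\langle \M_{j-1},\tilde{\M}_{j-1}\rangle=\p_{j,1}\cap\p_{\infty,2}$.

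The proposition is therefore \emph{equivalent} to $\p_{j,1}\subseteq\p_{\infty,2}$. That is: for compact $A$, if $\pi_j(A)\in\langle\cone(r_{j-2})\rangle$ then $\pi_\infty(A)\in\langle\cone(r_0)^{iso}\rangle$. Nothing established before this proposition gives that. In the paper this specialisation is deduced \emph{from} the proposition. Your remark that such primes ``contain $\p_{j,1}$ and so are harmless'' assumes exactly what has to be proved.
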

\begin{proof}
As discussed in Remark \ref{remark: generators case j=2} and Remark \ref{remark: generators 3 leq j le infty}, the inclusion $\langle \M_{j-1}, \tilde{\M}_{j-1} \rangle \subseteq \p_{j,1}$ is clear. For the other inclusion, suppose that $A \in \p_{j,1}$, we will show that $\tilde{\mathcal{X}}_{j-1} \otimes A \in \mathrm{Loc}_{\otimes}(\tilde{\M}_{j-1})$; this will be sufficient because we already know that $\mathcal{X}_{j-1} \otimes A \in \mathrm{Loc}_{\otimes}(\M_{j-1})$, and the rest would follow from the familiar triangle 
\[\mathcal{X}_{j-1} \otimes A \to A \to \tilde{\mathcal{X}}_{j-1} \otimes A \to \mathcal{X}_{j-1} \otimes A[1]. \]
To prove the claim,  we first use the fact that by assumption $\tilde{\mathcal{D}}_{r_{j-2}} \otimes \mathcal{X}_{j} \otimes A\cong 0$; using the idempotent triangle for $\mathcal{X}_j$, this implies that $\tilde{\mathcal{D}}_{r_{j-2}} \otimes A \cong \tilde{\mathcal{D}}_{r_{j-2}} \otimes \tilde{\mathcal{X}}_j  \otimes A$. Next, we have a triangle
\[ \mathcal{D}_{r_{j-2}} \to \tilde{\mathcal{X}}_{j-1} \to \tilde{\mathcal{D}}_{r_{j-2}} \to \mathcal{D}_{r_{j-2}}[1],\]
where one can check that $\mathcal{D}_{r_{j-2}} \in \mathrm{Loc}_{\otimes}(\tilde{\M}_{j-1})$. Tensoring the above triangle by $A$, we see that to prove the claim it is sufficient to show that 
\[\tilde{\mathcal{D}}_{r_{j-2}} \otimes A \cong \tilde{\mathcal{D}}_{r_{j-2}} \otimes \tilde{\mathcal{X}}_j  \otimes A \in \mathrm{Loc}_{\otimes}(\tilde{\M}_{j}) \subseteq \mathrm{Loc}_{\otimes}(\tilde{\M}_{j-1}).\] 
Consider $r_{j-1}: \tilde{\mathcal{X}}_j  \to \tilde{\mathcal{X}}_j(-d_{j-1})[-2d_{j-1}-1]$, and note that since $\mathrm{Loc}_{\otimes}(\tilde{\M}_{j})=\mathrm{Loc}_{\otimes}(\cone(r_{j-1}))$, then it suffices to show that $r_{j-1}$ is $\otimes$-nilpotent on $\tilde{\mathcal{D}}_{r_{j-2}} \otimes A$. Under the equivalences discussed above, we will do this by proving the stronger statement that there is a sufficiently large $K$ such that 
\[\Hom(\tilde{\mathcal{D}}_{r_{j-2}} \otimes A, \tilde{\mathcal{D}}_{r_{j-2}} \otimes A(-Kd_{j-1})[-2Kd_{j-1}-K] )=0.\]
Moreover, taking $C= A^{\vee} \otimes A$, the above is equivalent by duality to showing that 
\[\Hom(\tilde{\mathcal{D}}_{r_{j-2}}, \tilde{\mathcal{D}}_{r_{j-2}} \otimes C(-Kd_{j-1})[-2Kd_{j-1}-K] )=0.\]
for $K$ sufficiently large. Now, arguing as in Proposition \ref{prop: generators of closed points}, we know that $\tilde{\mathcal{D}}_{r_{j-2}} \otimes C$ is an extension of finitely many objects of the form $\tilde{\mathcal{D}}_{r_{j-2}}(x)[y]$ and $\tilde{\mathcal{D}}_{r_{j-2}} \otimes \M_l(x)[y]$ for $l \geq j$ and $y-2x \geq m$, where $m$ is some fixed integer; this means that it suffices to show that for each shift $(x)[y]$ as above there is some sufficiently large $k$ such that
\[(1) \quad \Hom(\tilde{\mathcal{D}}_{r_{j-2}}, \tilde{\mathcal{D}}_{r_{j-2}}(x-kd_{j-1})[y-2kd_{j-1}-k] )=0\]
and
\[(2) \quad \Hom(\tilde{\mathcal{D}}_{r_{j-2}}, \tilde{\mathcal{D}}_{r_{j-2}}\otimes \M_l (x-kd_{j-1})[y-2kd_{j-1}-k] )=0.\]
To study the vanishing of these groups, recall that we may compute \[\mathbb{E}nd(\tilde{\mathcal{D}}_{r_{j-2}})=\colim_{n} \mathbb{E}nd(\tilde{\mathcal{X}}_{{j-1}}),\]
where the transition maps are given by multiplication by $r_{j-2}$, i.e.
\[\mathbb{E}nd(\tilde{\mathcal{D}}_{r_{j-2}})= \mathbb{E}nd(\tilde{\mathcal{X}}_{{j-1}})[r_{j-2}^{-1}].\]
In particular, the vanishing of $(1)$ follows from essentially the same verifications as in the proof of Proposition \ref{prop: generators of closed points}. For $(2)$, note that it suffices to study the vanishing of
\[ (2') \quad \Hom(\T, \tilde{\mathcal{D}}_{r_{j-2}}\otimes \M_l (x-kd_{j-1})[y-2kd_{j-1}-k]),\]
and the latter can be rewritten as the following colimit
\[\colim_{n} \Hom(\T, \tilde{\mathcal{X}}_{j-1}\otimes \M_l (x-kd_{j-1}-nd_{j-2})[y-2kd_{j-1}-k -2nd_{j-2}-n])).\]
By the analogous verification in the proof of Proposition \ref{prop: generators of closed points}, we can always find some sufficiently large $k$ such that the groups in the above colimit are (eventually) zero. 
\end{proof}
Using the description of the generators of all the tt-primes, we may update the previous picture of $\Spc(\DQMgm(F; \F_2))$.
\[\begin{tikzcd}[ampersand replacement=\&,sep=tiny]
	{\mathfrak{p}_{\infty,0}} \&\& {\mathfrak{p}_{k+1,0}} \& {\mathfrak{p}_{k,0}} \& {} \& {\mathfrak{p}_{3,0}} \& {\mathfrak{p}_{2,0}} \& {\mathfrak{p}_{1,0}} \\
	\& \dots \&\&\& \dots \\
	\& {} \& {\mathfrak{p}_{k+1,1}} \& {\mathfrak{p}_{k,1}} \& {} \& {\mathfrak{p}_{3,1}} \& {\mathfrak{p}_{2,1}} \& {\mathfrak{p}_{1,1}} \\
	{\mathfrak{p}_{\infty,1}} \\
	\&\&\&\& {\mathfrak{p}_{\infty,2}}
	\arrow[no head, from=1-1, to=4-1]
	\arrow[between={0}{0.4}, no head, from=1-3, to=3-2]
	\arrow[no head, from=1-3, to=3-3]
	\arrow[no head, from=1-4, to=3-3]
	\arrow[no head, from=1-4, to=3-4]
	\arrow[between={0}{0.4}, no head, from=1-6, to=3-5]
	\arrow[no head, from=1-6, to=3-6]
	\arrow[no head, from=1-7, to=3-6]
	\arrow[no head, from=1-7, to=3-7]
	\arrow[no head, from=1-8, to=3-7]
	\arrow[no head, from=1-8, to=3-8]
	\arrow["\dots"{description}, color={rgb,255:red,97;green,209;blue,108}, no head, from=3-2, to=5-5]
	\arrow[color={rgb,255:red,97;green,209;blue,108}, no head, from=3-3, to=5-5]
	\arrow[between={0}{0.4}, no head, from=3-4, to=1-5]
	\arrow[color={rgb,255:red,97;green,209;blue,108}, no head, from=3-4, to=5-5]
	\arrow[color={rgb,255:red,97;green,209;blue,108}, no head, from=3-6, to=5-5]
	\arrow[color={rgb,255:red,97;green,209;blue,108}, no head, from=3-7, to=5-5]
	\arrow[shift right=2, no head, from=4-1, to=5-5]
	\arrow["\dots"{description, pos=0.3}, color={rgb,255:red,97;green,209;blue,108}, between={0}{0.6}, no head, from=5-5, to=2-5]
	\arrow[shift right=2, no head, from=5-5, to=3-8]
\end{tikzcd}\]
Moreover, by inspection it becomes now straightforward to verify that there are no other specialisation relations among these points.
\section{The topology}
At this stage, we have a description of  $\Spc(\DQMgm(F; \F_2))$ as a set, we know the generators of all the prime tt-ideals and we know the specialisation relations among them. We now move to describe the topology. In general, $\Spc(\cat{K})$ has a basis of closed subsets given by
\[\{\supp(A) | A \in \cat{K}\}\]
and their complements $U(A)$ are quasi-compact. We will explicitly describe this basis by proving the following.
\begin{thm}\label{thm: basis of spectrum}
\emph{A basis of closed subsets of $\Spc(\DQMgm(F; \F_2)$ is given by specialisation closed subsets $Z$ such that
\begin{enumerate}
    \item if $\p_{\infty,0} \in Z$, then $Z$ is cofinite;
    \item if $\p_{\infty,0} \not \in Z$, then $Z$ is finite.
\end{enumerate}}
\end{thm}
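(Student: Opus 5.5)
We will show that the family $\{\supp(A) \mid A \in \DQMgm(F;\F_2)\}$ coincides with the family $\mathcal{B}$ of specialisation closed subsets satisfying (1) and (2). The proof has two inclusions.

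\emph{Every support lies in $\mathcal{B}$.} Supports are closed, hence specialisation closed. There are two cases.
\begin{itemize}
\item If $\p_{\infty,0}\notin\supp(A)$, then $A\in\p_{\infty,0}=\langle \M_n \mid n\in\N\rangle$. So $A$ lies in the thick tensor ideal generated by finitely many Rost motives $\M_1,\dots,\M_N$. Hence $\supp(A)\subseteq \bigcup_{n\le N}\supp(\M_n)$. From Table \ref{tab:geometric_functors}, $\pi_k(\M_n)=0$ for $k\ge n+1$, including $k=\infty$. Therefore $\supp(\M_n)\subseteq\{\p_{k,j} \mid k\le n\}$, which is finite, and so $\supp(A)$ is finite.
\item If $\p_{\infty,0}\in\supp(A)$, we apply the first case to the complement. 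Since $\p_{\infty,0}\notin U(A)$, the open set $U(A)$ contains no point $\p_{\infty,j}$, because these specialise from $\p_{\infty,0}$. We must show $U(A)$ is finite. $U(A)$ is quasi-compact, so it is a finite union of basic opens $U(B_i)$. We claim that if $\p_{\infty,0}\notin U(B)$ then $U(B)$ is finite. Indeed $B\notin\p_{\infty,0}$, so $\pi_\infty(B)\neq0$. Since $\cat{K}^c_\infty$ is local, $\pi_\infty(B)$ does not lie in the minimal prime $0$. Consider the Chow weight complex of $B$ in $\DQMgm$, whose heart is generated by Tate and Rost motives. Its Tate part must be nonzero; otherwise $B$ would be built from Rost motives and hence would lie in $\p_{\infty,0}$.
\end{itemize}

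The key point is then to show that for $k$ large, $\pi_k(B)$ is nonzero and not in the maximal prime. This is what we expect to be the main obstacle. By the table, for $k$ larger than the indices $l$ of all Rost motives appearing in the weight complex of $B$, the objects $\pi_k(B)$ and $\pi_\infty(B)$ are built from the same "Tate skeleton" through the weight complexes. We would argue that the weight complex of $\pi_k(B)$ in the semisimple-heart Chow weight structure of Lemma \ref{lemma: chow weight structure and locality of cellular tt-categories} equals the Tate part of $t(B)$, and that the same holds for $\pi_\infty$. From this, $\pi_k(B)\neq0$. It remains to check that $\pi_k(B)\notin\langle\cone(r_{k-2})\rangle$ for $k\gg0$. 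For this we use $\p_{k,1}=\langle\M_{k-1},\tilde{\M}_{k-1}\rangle$: if $B\in\p_{k,1}$ for infinitely many $k$, then $\pi_\infty(B)$ would be killed by comparison. We plan to argue via the colimit $\Upsilon^{iso}=\hocolim\tilde{\mathcal X}_n$ and compactness. Concretely, $B\in\p_{k,0}$ for infinitely many $k$ means $\tilde{\mathcal X}_{k-1}\otimes\mathcal X_k\otimes B=0$. Compactness should then force a vanishing in $\Upsilon^{iso}\otimes B$, which would contradict $\p_{\infty,0}\in U(B)$ failing. Hence $U(B)\ni\p_{k,0}$ for almost all $k$, so $\supp(B)$ is finite when $\p_{\infty,0}\notin U(B)$. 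Dually, $U(B)$ is cofinite when $\p_{\infty,0}\in U(B)$, and so $\supp(A)$ is cofinite when it contains $\p_{\infty,0}$.

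\emph{Every $Z\in\mathcal B$ is a support.} If $Z$ is finite and specialisation closed, it is a finite union of closures $\overline{\{\p\}}$ of points other than $\p_{\infty,0}$. Each such closure is a support:
\begin{itemize}
\item $\overline{\{\p_{j,0}\}}=\supp$ of a generator of a suitable complement, using the generators $\langle\M_{j-1},\tilde{\M}_j\rangle$ and $\langle\M_{j-1},\tilde\M_{j-1}\rangle$; explicitly, $\supp(\M_j\otimes\tilde\M_{j-1})$-type objects, obtained via $\supp(X\otimes Y)=\supp X\cap\supp Y$.
\item $\overline{\{\p_{\infty,1}\}}=\supp(\cone(\tau))$ restricted appropriately.
\item $\overline{\{\p_{\infty,2}\}}$ arises similarly.
\end{itemize}
Finite unions of supports are supports of direct sums, so $Z$ is a support.

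If $Z$ is cofinite and contains $\p_{\infty,0}$, then $Z$ contains every point, since every point is a specialisation of $\p_{\infty,0}$ or a closed point near it. Its complement is then a finite set of closed points $\p_{j,0}$, possibly together with some $\p_{j,1}$. We realise the complement as $U(A)$ with $A$ a suitable tensor product of the objects $\M_{j}$ and $\tilde{\M}_j$ from the generators, using $U(A\otimes B)=U(A)\cup U(B)$.
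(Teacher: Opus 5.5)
Your case $\p_{\infty,0}\notin\supp(A)$ is fine and matches the paper. The case $\p_{\infty,0}\in\supp(A)$, however, is not proved, for four reasons.

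\emph{False claim.} You say $U(A)$ contains no $\p_{\infty,j}$. But open sets are closed under generalisation, not specialisation: $U(\cone(\rho))=\{\p_{\infty,1},\p_{\infty,2}\}$ avoids $\p_{\infty,0}$.

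\emph{Circular use of quasi-compactness.} $U(A)$ is already a basic open, so reducing to opens $U(B_i)$ satisfying the same hypothesis proves nothing.

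\emph{Weight complexes are too coarse.} At best they give $\pi_k(B)\neq 0$, i.e.\ $B\notin\p_{k,0}$, for $k\gg0$. They cannot decide whether $B\in\p_{k,1}$: in $\cat{K}_k^c$ the heart is semisimple, so weight complexes only detect whether an object is zero. Indeed $\cone(r_{k-2})$ itself has non-zero weight complex.

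\emph{Unsupported final step.} The claim that compactness should force $\Upsilon^{iso}\otimes B=0$ has no argument behind it. Knowing $B\in\p_{k,0}$ only says $\tilde{\mathcal{X}}_{k-1}\otimes B\to\tilde{\mathcal{X}}_k\otimes B$ is an isomorphism for those $k$. That says nothing about $\hocolim_n\tilde{\mathcal{X}}_n\otimes B$ vanishing. Your conclusion also swaps $U(B)$ and $\supp(B)$.

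The missing idea is topological and uses the generators of the primes $\p_{1,0}$, $\p_{j,0}$, $\p_{\infty,1}$. These show that every point other than $\p_{\infty,0}$ has a finite open neighbourhood:
\[\Spc(\DQMgm(F;\F_2))\setminus\{\p_{\infty,0}\}=U(\cone(\rho))\cup U(\tilde{\M}_1)\cup\bigcup_{j\geq 2}U(\M_{j-1}\oplus\tilde{\M}_j),\]
and each term on the right is finite. The quasi-compact open $U(A)$ lies in this union, so finitely many terms cover it, and $U(A)$ is finite. No weight structure is needed.

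The converse direction also has errors.

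\emph{Finite case.} The closures $\overline{\{\p_{\infty,1}\}}$ and $\overline{\{\p_{\infty,2}\}}$ contain $\p_{\infty,0}$ (the latter is the whole space), so they never occur in a finite $Z$. Also $\overline{\{\p_{\infty,1}\}}\neq\supp(\cone(\tau))$. What you actually need, with $\tilde{\M}_0:=\cone(\tau)$, are:
\[\{\p_{j,0}\}=\supp(\tilde{\M}_{j-1}\otimes\M_j),\qquad \overline{\{\p_{1,1}\}}=\supp(\M_1),\qquad \overline{\{\p_{j,1}\}}=\{\p_{j,1},\p_{j,0},\p_{j-1,0}\}=\supp(\tilde{\M}_{j-2}\otimes\M_j)\ (j\geq 2).\]

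\emph{Cofinite case.} $Z$ need not be the whole space; $\supp(\cone(\rho))$ is a counterexample. If $Z$ is proper then $\p_{\infty,2}\notin Z$, and $Z^c$ may also contain $\p_{\infty,1}$ or $\p_{1,1}$. Such $Z$ cannot be realised by sums and tensor products of the $\M_j$ and $\tilde{\M}_j$ alone. The points $\p_{\infty,0}$ and $\p_{\infty,1}$ lie in exactly the same sets $\supp(\M_j)$, $\supp(\tilde{\M}_j)$, and so do $\p_{1,0}$ and $\p_{1,1}$. Hence $\cone(\rho)$ and $\cone(\tau)$ are indispensable.

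A clean way to finish: every prime $\p\neq\p_{\infty,0}$ is finitely generated, say $\p=\langle X_\p\rangle$. Then $U(X_\p)$ is exactly the set of generalisations of $\p$. The finite generalisation-closed set $Z^c$ therefore equals $U\bigl(\bigotimes_{\p\in Z^c}X_\p\bigr)$.
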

\begin{table}[htbp]
    \centering
    \setlength{\tabcolsep}{3em}
\begin{tabular}{@{}cl@{}}
        \toprule
        \textbf{Support} & \textbf{Picture} \\
        \midrule      
1. $\supp(\cone(\rho))$ & $\begin{tikzcd}[ampersand replacement=\&,sep=small]
	{\mathfrak{p}_{\infty,0}} \&\& {\mathfrak{p}_{k+2,0}} \& {\mathfrak{p}_{k+1,0}} \& {\mathfrak{p}_{k,0}} \& {} \& {\mathfrak{p}_{2,0}} \& {\mathfrak{p}_{1,0}} \\
	\& \dots \&\&\&\& \dots \\
	\& {} \& {\mathfrak{p}_{k+2,1}} \& {\mathfrak{p}_{k+1,1}} \& {\mathfrak{p}_{k,1}} \& {} \& {\mathfrak{p}_{2,1}} \& {\mathfrak{p}_{1,1}}
	\arrow[between={0}{0.4}, no head, from=1-3, to=3-2]
	\arrow[no head, from=1-3, to=3-3]
	\arrow[no head, from=1-4, to=3-3]
	\arrow[no head, from=1-4, to=3-4]
	\arrow[no head, from=1-5, to=3-5]
	\arrow[between={0.6}{1}, no head, from=1-6, to=3-5]
	\arrow[between={0}{0.4}, no head, from=1-7, to=3-6]
	\arrow[no head, from=1-7, to=3-7]
	\arrow[no head, from=1-8, to=3-7]
	\arrow[no head, from=1-8, to=3-8]
	\arrow[no head, from=3-4, to=1-5]
\end{tikzcd}$ \\[8ex]
        
2. $\supp(\cone(\tau))$ & $\begin{tikzcd}[ampersand replacement=\&,sep=small]
	{\mathfrak{p}_{\infty,0}} \&\& {\mathfrak{p}_{k+2,0}} \& {\mathfrak{p}_{k+1,0}} \& {\mathfrak{p}_{k,0}} \& {} \& {\mathfrak{p}_{2,0}} \& {\mathfrak{p}_{1,0}} \\
	\& \dots \&\&\&\& \dots \\
	{\mathfrak{p}_{\infty,1}} \& {} \& {\mathfrak{p}_{k+2,1}} \& {\mathfrak{p}_{k+1,1}} \& {\mathfrak{p}_{k,1}} \& {} \& {\mathfrak{p}_{2,1}}
	\arrow[no head, from=1-1, to=3-1]
	\arrow[between={0}{0.4}, no head, from=1-3, to=3-2]
	\arrow[no head, from=1-3, to=3-3]
	\arrow[no head, from=1-4, to=3-3]
	\arrow[no head, from=1-4, to=3-4]
	\arrow[no head, from=1-5, to=3-5]
	\arrow[between={0.6}{1}, no head, from=1-6, to=3-5]
	\arrow[between={0}{0.4}, no head, from=1-7, to=3-6]
	\arrow[no head, from=1-7, to=3-7]
	\arrow[no head, from=1-8, to=3-7]
	\arrow[no head, from=3-4, to=1-5]
\end{tikzcd}$ \\[8ex]

3. $\supp(\tilde{\M}_k)$ & $\begin{tikzcd}[ampersand replacement=\&,sep=small]
	{\mathfrak{p}_{\infty,0}} \&\& {\mathfrak{p}_{k+2,0}} \& {\mathfrak{p}_{k+1,0}} \\
	\& \dots \\
	{\mathfrak{p}_{\infty,1}} \& {} \& {\mathfrak{p}_{k+2,1}}
	\arrow[no head, from=1-1, to=3-1]
	\arrow[between={0}{0.4}, no head, from=1-3, to=3-2]
	\arrow[no head, from=1-3, to=3-3]
	\arrow[no head, from=1-4, to=3-3]
\end{tikzcd}$ \\[8ex]

4.  $\supp(\M_{k})$ & $\begin{tikzcd}[ampersand replacement=\&,sep=small]
	\&\&\&\&\&\&\&\& {\mathfrak{p}_{k,0}} \& {} \& {\mathfrak{p}_{2,0}} \& {\mathfrak{p}_{1,0}} \\
	{} \& {} \& {} \& {} \& {} \& {} \& {} \& {} \&\& \dots \\
	\&\&\&\&\&\&\&\& {\mathfrak{p}_{k,1}} \& {} \& {\mathfrak{p}_{2,1}} \& {\mathfrak{p}_{1,1}}
	\arrow[no head, from=1-9, to=3-9]
	\arrow[between={0}{0.4}, no head, from=1-11, to=3-10]
	\arrow[no head, from=1-11, to=3-11]
	\arrow[no head, from=1-12, to=3-11]
	\arrow[no head, from=1-12, to=3-12]
	\arrow[between={0}{0.4}, no head, from=3-9, to=1-10]
\end{tikzcd}$\\[6ex]
        \bottomrule
    \end{tabular}
    \vspace{0.1cm}
    \caption{It is useful to record the support of some familiar objects in $\Spc(\DQMgm(F; \F_2))$.}
    \label{tab:support_diagrams}
\end{table}

\begin{proof}
It suffices to show that a specialisation closed subset is of the form $(1)$ or $(2)$ if and only if $Z=\supp(A)$ for some object $A \in \DQMgm(F; \F_2)$. We begin by showing that for any object $A$, $\supp(A)$ has the desired form. 

\textbf{Case 1: $\p_{\infty,0} \in \supp(A)$}. In this case, 
\[U(A) \subseteq \Spc(\DQMgm(F; \F_2))\setminus \{\p_{\infty,0}\}=U(\cone(\rho)) \cup U(\tilde{\M}_1) \cup \bigcup_{j \geq 2} U(\M_{j-1} \oplus \tilde{\M}_j).\]
Since $U(A)$ is quasi-compact, and each open subset in the right hand side contains finitely many points, then $U(A)=\supp(A)^c$ is finite, which proves that $\supp(A)$ satisfies (1).

\textbf{Case 2: $\p_{\infty,0} \not \in  \supp(A)$}. The assumption implies that $A \in \p_{\infty,0}= \langle \M_n \mid n \in \N \rangle$.
Since $A$ is compact, then it is in fact built from finitely many Rost motives, i.e.
\[A \in \langle \M_{t_1}, \dots, \M_{t_s} \rangle \subseteq \langle \M_n \rangle,\]
for some sufficiently large $n$. Hence $\supp(A) \subseteq \supp(\M_n)$, and since $\supp(\M_n)$ is finite (cf. Table \ref{tab:support_diagrams}), we deduce that $\supp(A)$ satisfies (2).

This proves one direction, now we must prove that any specialisation-closed subset $Z$ of the form $(1)$ or $(2)$ can be written as the support of some object.

\textbf{Case 1: $\p_{\infty,0} \in Z$ and $Z$ is cofinite}. First, if $\p_{\infty,2} \in Z$, then $Z = \Spc(\DQMgm(F; \F_2)) = \supp(\T)$. If $\p_{\infty,2} \not \in Z$, then it must be contained in 
\[\supp(\cone(\rho) \oplus \cone(\tau))=\{\p_{\infty,2}\}^c .\]
By assumption, the (finite) complement $Z^c$ contains at most finitely many closed points $\p_{k,0}$. Because $Z$ is specialisation-closed, missing $\p_{k,0}$ implies missing $\p_{k,1}$ and $\p_{k+1,1}$; in particular, $Z$ is contained in a finite intersection of subsets of the form
\[\supp(\tilde{\M}_1)=\{\p_{1,0}, \p_{1,1}, \p_{\infty,2}\}^c, \quad \supp(\M_{k-1} \oplus \tilde{\M}_k)=\{\p_{k,0}, \p_{k,1}, \p_{k+1,1}, \p_{\infty,2}\}^c \, \, \text{ for } k \geq 2.\]
This means that $Z \subseteq \supp(Y)$ for some object $Y$ obtained as a tensor product of objects involved in the above finite intersection. If the inclusion is an equality, we are done; otherwise, we observe that the only points in $\supp(Y)$ that are not in $Z$ can only be of the form $\p_{k,1}$ for $k \in \N \cup \{ \infty \}$, and there can only be finitely many of these. Those can be removed by intersecting $\supp(Y)$ with finitely many subsets of the form
\[\supp(\cone(\rho))=\{\p_{\infty, 1}, \p_{\infty,2 }\}^c, \quad \supp(\cone(\tau))=\{\p_{1,1}, \p_{\infty, 2} \}^c, \quad \supp(\M_k \oplus \tilde{\M}_k)= \{\p_{k+1,1}, \p_{\infty,2}\}^c. \]
The resulting subset will be by construction equal to $Z$ and it can be written as the support of $Y$ tensored by objects involved in the above finite intersection.

\textbf{Case 2: $\p_{\infty,0} \not \in Z$ and $Z$ is finite}. Because it is specialisation-closed, $Z$ cannot contain $\p_{\infty,1}$ or $\p_{\infty,2}$. Since $Z$ is finite, $Z \subseteq \supp(\M_k)$ for a sufficiently large $k$. Any finite specialisation-closed subset of $\supp(\M_k)$ can be constructed as a finite union of the following:
\begin{itemize}
    \item  $\{\p_{k,0}\}=\supp(\tilde{\M}_{k-1} \otimes \M_k)$ for $ 1 \leq k < \infty$, and $\tilde{\M}_0:=\cone(\tau)$.
    \item $\overline{\{\p_{1,1}\}}=\supp(\M_1)$;
    \item $\overline{\{\p_{k,1}\}}=\supp(\tilde{\M}_{k-2} \otimes \M_k)$ for $2 \leq k < \infty$, and $\tilde{\M}_0:=\cone(\tau)$. 
\end{itemize}
From this it is clear that $Z$ can be expressed as the support of the direct sum of objects involved in the above finite union.
\end{proof}
From the description of the basis in the above theorem, it is straightforward to characterise all the closed subsets of the spectrum.
\begin{cor} \label{cor: closed sets of the spectrum}
\emph{The closed subsets of $\Spc(\DQMgm(F; \F_2))$ are specialisation closed subsets such that if $\p_{\infty,0} \not \in Z$, then $Z$ is finite.}
\end{cor}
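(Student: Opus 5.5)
Since the supports $\supp(A)$ form a basis of closed subsets, a subset $Z$ is closed if and only if it is an intersection of basic closed subsets, i.e.\ of sets of the two shapes in Theorem \ref{thm: basis of spectrum}. The plan is to check directly that intersections of such sets are exactly the specialisation closed subsets $Z$ satisfying: if $\p_{\infty,0}\notin Z$, then $Z$ is finite. This splits into two inclusions.

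\emph{Necessity.} Let $Z=\bigcap_\lambda Z_\lambda$ with each $Z_\lambda$ basic. Each $Z_\lambda$ is specialisation closed, so $Z$ is too. If $\p_{\infty,0}\notin Z$, then $\p_{\infty,0}\notin Z_\lambda$ for some $\lambda$. By Theorem \ref{thm: basis of spectrum} this $Z_\lambda$ is finite, so $Z\subseteq Z_\lambda$ is finite.

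\emph{Sufficiency.} Let $Z$ be specialisation closed. If $\p_{\infty,0}\notin Z$ and $Z$ is finite, then $Z$ is itself basic of type (2), hence closed.

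If instead $\p_{\infty,0}\in Z$, I will write $Z=\bigcap_{\p\notin Z} Z_\p$, where $Z_\p$ is a basic set of type (1) containing $Z$ and missing $\p$. To build $Z_\p$, set $Z_\p := \Spc\setminus \mathrm{gen}(\p)$, where $\mathrm{gen}(\p)=\{\mathfrak q : \mathfrak q\subseteq \p\}$ is the set of generalisations of $\p$ (points specialising to $\p$). Three checks are needed.
\begin{itemize}
\item $Z_\p$ is specialisation closed, since the complement of a generalisation-closed set is specialisation closed.
\item $Z\subseteq Z_\p$: because $Z$ is specialisation closed and $\p\notin Z$, no generalisation of $\p$ lies in $Z$.
\item $Z_\p$ is cofinite and contains $\p_{\infty,0}$: from the explicit specialisation diagram, every point except $\p_{\infty,1}$ and $\p_{\infty,2}$ has finitely many generalisations.
\end{itemize}

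The main obstacle is that third check for the two exceptional points. The point $\p_{\infty,2}$ has infinitely many generalisations, including $\p_{\infty,0}$, but $\p_{\infty,2}$ lies in every nonempty closed-under-specialisation set through $\p_{\infty,0}$. Similarly, $\p_{\infty,1}$ is a specialisation of $\p_{\infty,0}$. Hence if $\p_{\infty,0}\in Z$, then $\p_{\infty,1},\p_{\infty,2}\in Z$, and these two points never arise as some $\p\notin Z$.

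For all remaining points $\p=\p_{k,j}$ with $k$ finite, the set $\mathrm{gen}(\p)$ consists of at most three points (namely $\p_{k,1}$ and possibly $\p_{k,0}$, $\p_{k-1,0}$), and it never contains $\p_{\infty,0}$. So $Z_\p$ is cofinite and contains $\p_{\infty,0}$, hence is basic of type (1). Intersecting over all $\p\notin Z$ recovers $Z$, which completes the proof.
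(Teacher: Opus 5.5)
Your strategy is the right one, and it is what the paper's one-line deduction from Theorem~\ref{thm: basis of spectrum} amounts to. Your necessity argument and the case $\p_{\infty,0}\notin Z$ are fine. The sufficiency step for $\p_{\infty,0}\in Z$, however, is carried out with the specialisation order reversed, and as written it is wrong.

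In a Balmer spectrum $\overline{\{\p\}}=\{\mathfrak q : \mathfrak q\subseteq\p\}$. So the set defined by your formula $\mathrm{gen}(\p)=\{\mathfrak q:\mathfrak q\subseteq\p\}$ is the closure of $\p$, not its set of generalisations. For that set your first two bullets fail:
\begin{itemize}
\item For $\p\neq\p_{\infty,2}$, the complement $\Spc\setminus\overline{\{\p\}}$ contains the generic point $\p_{\infty,2}$ but not $\p$, so it is not specialisation closed.
\item Take $Z=\{\p_{\infty,0},\p_{1,0}\}$ and $\p=\p_{1,1}\notin Z$. Then $\overline{\{\p_{1,1}\}}=\{\p_{1,1},\p_{1,0}\}$ meets $Z$, so $Z\not\subseteq Z_\p$.
\end{itemize}
The bullets are only correct for your verbal gloss, i.e.\ for $\{\mathfrak q:\p\subseteq\mathfrak q\}$.

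Your ``main obstacle'' paragraph uses the reversed order again and contains false claims. Since $\p_{\infty,0}\subset\p_{\infty,1}\subset\p_{\infty,2}$, the point $\p_{\infty,0}$ is closed and $\p_{\infty,2}$ is the generic point, whose only generalisation is itself. It is $\p_{\infty,1}$ and $\p_{\infty,2}$ that specialise to $\p_{\infty,0}$, not the other way round. So it is false that $\p_{\infty,1},\p_{\infty,2}\in Z$ whenever $\p_{\infty,0}\in Z$: the closed set $Z=\overline{\{\p_{\infty,0}\}}=\{\p_{\infty,0}\}$ misses both. These two points therefore do occur as $\p\notin Z$. If $\mathrm{gen}(\p_{\infty,2})$ really contained $\p_{\infty,0}$, your construction would give no basic set of type (1) separating $\p_{\infty,2}$ from this $Z$.

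The repair removes the obstacle entirely: take $\mathrm{gen}(\p)=\{\mathfrak q:\p\subseteq\mathfrak q\}$. Reading off the diagram:
\begin{itemize}
\item $\mathrm{gen}(\p_{k,0})=\{\p_{k,0},\p_{k,1},\p_{k+1,1},\p_{\infty,2}\}$;
\item $\mathrm{gen}(\p_{k,1})=\{\p_{k,1},\p_{\infty,2}\}$;
\item $\mathrm{gen}(\p_{\infty,0})=\{\p_{\infty,0},\p_{\infty,1},\p_{\infty,2}\}$;
\item $\mathrm{gen}(\p_{\infty,1})=\{\p_{\infty,1},\p_{\infty,2}\}$;
\item $\mathrm{gen}(\p_{\infty,2})=\{\p_{\infty,2}\}$.
\end{itemize}
So every point has finitely many generalisations. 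Moreover, since $\p_{\infty,0}$ is a closed point, $\p_{\infty,0}\in\mathrm{gen}(\p)$ only when $\p=\p_{\infty,0}$. This cannot happen for $\p\notin Z\ni\p_{\infty,0}$.

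Hence for every $\p\notin Z$ the set $Z_\p=\Spc\setminus\mathrm{gen}(\p)$ is specialisation closed, cofinite, contains $\p_{\infty,0}$ and $Z$, and misses $\p$. By Theorem~\ref{thm: basis of spectrum} it is basic of type (1). Then $Z=\bigcap_{\p\notin Z}Z_\p$, with the empty intersection covering $Z=\Spc$. With this correction there is no exceptional case, and your argument goes through.
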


\begin{remark}
We observe that $\Spc(\DQMgm(F; \F_2))$ is a countably infinite space of Krull dimension $2$. Moreover, from the description of the basis of closed subsets it is clear that the closed point $\p_{\infty,0}$ is not visible, i.e. it cannot be written as the support of some object. In particular, this implies by \cite{balmer2011generalized} that $\Spc(\DQMgm(F; \F_2))$ is not Noetherian. Of course, the latter fact may also be seen directly from the explicit description of the closed sets.
\end{remark}
\section{Applications}
\subsection{Restriction to Artin-Tate motives}
Under the inclusion 
\[\DATMgm(F; \F_2) \hookrightarrow \DQMgm(F; \F_2)\] 
we may inspect the generators of the prime ideals of $\DATMgm(F; \F_2)$ found in \cite[Theorem 10.1, Remark 10.16]{balmer2022three} to understand how points get mapped under the induced restriction
\[\Spc(\DQMgm(F; \F_2)) \twoheadrightarrow \Spc(\DATMgm(F; \F_2)).\]
The resulting map is summarised in the diagram below, where the colours indicate how points of $\Spc(\DQMgm(F; \F_2))$ get mapped to $\Spc(\DATMgm(F; \F_2))$.

\[\begin{tikzcd}[ampersand replacement=\&,sep=tiny]
	{\mathfrak{p}_{\infty,0}} \&\& {\mathfrak{p}_{k+1,0}} \& {\mathfrak{p}_{k,0}} \& {} \& {\mathfrak{p}_{3,0}} \& {\mathfrak{p}_{2,0}} \& {\textcolor{red}{\mathfrak{p}_{1,0}}} \\
	\& \dots \&\&\& \dots \\
	\& {} \& {\mathfrak{p}_{k+1,1}} \& {\mathfrak{p}_{k,1}} \& {} \& {\mathfrak{p}_{3,1}} \& {\textcolor{orange}{\mathfrak{p}_{2,1}}} \& {\textcolor{blue}{\mathfrak{p}_{1,1}}} \\
	{\textcolor{violet}{\mathfrak{p}_{\infty,1}}} \\
	\&\&\&\& {\textcolor{teal}{\mathfrak{p}_{\infty,2}}} \\
	\&\& {\mathcal{L}_1} \&\&\&\& {\textcolor{red}{\mathcal{N}_1}} \\
	\&\&\&\& {\textcolor{orange}{\mathcal{M}_1}} \\
	\&\& {\textcolor{violet}{\mathcal{L}_0}} \&\&\&\& {\textcolor{blue}{\mathcal{N}_0}} \\
	\&\&\&\& {\textcolor{teal}{\mathcal{M}_0}}
	\arrow[no head, from=1-1, to=4-1]
	\arrow[between={0}{0.4}, no head, from=1-3, to=3-2]
	\arrow[no head, from=1-3, to=3-3]
	\arrow[no head, from=1-4, to=3-3]
	\arrow[no head, from=1-4, to=3-4]
	\arrow[between={0}{0.4}, no head, from=1-6, to=3-5]
	\arrow[no head, from=1-6, to=3-6]
	\arrow[no head, from=1-7, to=3-6]
	\arrow[no head, from=1-7, to=3-7]
	\arrow[no head, from=1-8, to=3-7]
	\arrow[no head, from=1-8, to=3-8]
	\arrow["\dots"{description}, no head, from=3-2, to=5-5]
	\arrow[no head, from=3-3, to=5-5]
	\arrow[between={0}{0.4}, no head, from=3-4, to=1-5]
	\arrow[no head, from=3-4, to=5-5]
	\arrow[no head, from=3-6, to=5-5]
	\arrow[no head, from=3-7, to=5-5]
	\arrow[shift right=2, no head, from=4-1, to=5-5]
	\arrow["\dots"{description, pos=0.3}, between={0}{0.6}, no head, from=5-5, to=2-5]
	\arrow[shift right=2, no head, from=5-5, to=3-8]
	\arrow[between={0.2}{0.8}, maps to, from=5-5, to=7-5]
	\arrow[no head, from=6-3, to=7-5]
	\arrow[no head, from=6-3, to=8-3]
	\arrow[no head, from=6-7, to=7-5]
	\arrow[no head, from=6-7, to=8-7]
	\arrow[no head, from=7-5, to=9-5]
	\arrow[no head, from=8-3, to=9-5]
	\arrow[no head, from=9-5, to=8-7]
\end{tikzcd}\]
\begin{remark}
From the description of the generators of the prime tt-ideals we see that $\p_{2,1}$ and $\p_{\infty,2}$ are the only \emph{periodic} points of the spectrum in the sense of \cite{gallauer2025periods}, and both have period $1$. All remaining points have period $0$. A notable feature of the restriction map depicted above is that all the additional points of $\Spc(\DQMgm(F; \F_2))$ that are not seen by $\Spc(\DATMgm(F; \F_2))$ come from the fiber over $\mathcal{L}_1\in \Spc(\DATMgm(F; \F_2))$. This behaviour can be explained by considering a more general notion of periodicity for prime tt-ideals, which we plan to develop in future joint work with Sparks, where we will also return to the present example.
\end{remark}

\subsection{Restriction of Vishik's isotropic points}
Recall that in \cite{vishik2024isotropic,vishik2025balmer} Vishik constructed $2^{\mathfrak{c}}$ closed points of $\Spc(\DMgm(\R; \F_2))$, denoted $a_{2,E}$ and indexed by $2$-equivalence classes of field extensions $E/\R$. We study their restriction along the inclusion
\[
\DQMgm(\R; \F_2)\hookrightarrow \DMgm(\R; \F_2).
\]
There are several ways to see that  $a_{2,E}$ must restrict to a closed point of $\Spc(\DQMgm(\R; \F_2))$. One convenient criterion uses an important property of isotropic points: if an isotropic point contains a Rost motive $\M_n$ (resp.\ a reduced Rost motive $\tilde{\M}_n$), then it cannot contain $\tilde{\M}_n$ (resp.\ $\M_n$). This is a consequence of the stronger fact that isotropic points are of boundary type ( \cite[Example 3.13]{vishik2025balmer}). Moreover, $\cone(\rho)$ and $\cone(\tau)$ are not contained in any isotropic points, since $\rho$ and $\tau$ become zero under Vishik's isotropic realisation.  By inspection on the generators of the prime tt-ideals of $\Spc(\DQMgm(\R; \F_2))$, this forces
\[a_{2, E} \cap \DQMgm(\R; \F_2)=\p_{k,0}, \quad \text{for some } k \in \N \cup \{\infty\}.\]
We now explain how to determine $k$ from $E$ using Vishik's $2$-partial ordering on field extensions. For a field extension $E/\R$, we shall consider
\[
S_E:=\{\,n\in \N \mid E \twodom \R(Q_n)\,\}.
\]
Since
\[\R(Q_1)\stackrel{2}{>} \R(Q_2)\stackrel{2}{>} \cdots \stackrel{2}{>} \R(Q_n)\stackrel{2}{>} \cdots,\]
it is clear that $S_E$ is upward closed: i.e. if  $m \in S_E$ and  $m'\geq m$, then  $m' \in S_E$. We define
\[k(E):=
\begin{cases}
\infty & \text{if } S_E=\emptyset,\\
\min(S_E) & \text{if } S_E\neq \emptyset.
\end{cases}
\]

\begin{prop}
For any field extension \(E/\R\), we have that
\[
a_{2,E}\cap \DQMgm(\R; \F_2)=\p_{k(E),0}.
\]
\end{prop}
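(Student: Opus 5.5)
Since the discussion preceding the statement already shows that $a_{2,E}\cap \DQMgm(\R;\F_2)=\p_{k,0}$ for some $k\in\N\cup\{\infty\}$, it remains to identify $k$. I will do this by testing which Rost motives $\M_n$ lie in $a_{2,E}$. By the generators computed in Section 5, $\M_n\in\p_{k,0}$ if and only if $n<k$ when $k<\infty$: indeed $\p_{k,0}=\langle \M_{k-1},\tilde{\M}_k\rangle$ contains $\M_n$ for $n\le k-1$, since $\M_n\in\langle\M_{k-1}\rangle$ via $\mathcal{X}_n\twodom\mathcal{X}_{k-1}$, and Table \ref{tab:support_diagrams} gives $\p_{k,0}\in\supp(\M_n)$ for $n\ge k$. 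Moreover $\p_{\infty,0}$ contains every $\M_n$. Hence $k$ is characterised as the least $n$ with $\M_n\notin a_{2,E}$, or $\infty$ if there is none. The proposition therefore reduces to the claim
\[
\M_n\notin a_{2,E}\iff E\twodom \R(Q_n),
\]
that is, $\M_n$ survives in $a_{2,E}$ exactly when $Q_n$ becomes $2$-isotropic over $E$. The minimum of the set of such $n$ is then $k(E)$, and upward closedness of $S_E$ matches the pattern above.

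To prove the claim I will use the description of $a_{2,E}$ as the kernel of the isotropic realisation $\psi_{E,2}$ restricted to compact objects. Since $\M_n\otimes\mathcal{X}_n\cong\M_n$ and $\M_n\in\mathrm{Thick}_\otimes(\mathcal{X}_n)$, while $\mathcal{X}_n\in\Loc_\otimes(\M_n)$ is not compact, the natural test is whether $\psi_{E,2}(\mathcal{X}_n)$ vanishes. Vishik's isotropic category over $E$ kills precisely the motives of $2$-anisotropic varieties. If $Q_n$ is $2$-anisotropic over $E(\PP^\infty)$, then $\M(Q_n)$, hence $\M_n$, maps to zero. Note that $2$-anisotropy over $E$ and over $E(\PP^\infty)$ agree, because purely transcendental extensions do not change isotropy. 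If instead $Q_n$ is $2$-isotropic over $E$, then $\M_n|_E$ splits as $\T\oplus\T\{d_{n-1}\}$, so $\psi_{E,2}(\M_n)\cong\unit\oplus\unit\{d_{n-1}\}\neq0$. Both directions are thus reduced to the splitting of Rost motives over fields where the Pfister form becomes isotropic, together with the defining property of the isotropic localisation.

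The main obstacle I expect is making the first direction precise: I need that $\psi_{E,2}$ annihilates $\M(Q)$ for $2$-anisotropic $Q$. This requires care with the definition of the isotropic category as a localisation at the $\mathcal{X}$'s of anisotropic varieties over $\tilde E=E(\PP^\infty)$, and with checking that the relevant anisotropy condition is exactly $E\not\twodom\R(Q_n)$, using the equivalent conditions on $\twodom$ recalled in the preliminaries. Once the claim holds, combining it with the membership pattern of the $\M_n$ in the primes $\p_{k,0}$ identifies $k=k(E)$, including the case $S_E=\emptyset$, where every $\M_n$ lies in $a_{2,E}$ and $k=\infty$.
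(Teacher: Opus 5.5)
Your proposal is correct and is essentially the paper's argument. Both start from the fact that the restriction is some $\p_{k,0}$ and use $2$-anisotropy of $(\M_{k-1})_E$ to get $\M_{k-1}\in a_{2,E}$. The only difference is the second test object: you use $\M_k\notin a_{2,E}$, via the splitting $\M_k|_E\cong\T\oplus\T\{d_{k-1}\}$ and the properness of $a_{2,E}$, whereas the paper uses $\tilde{\M}_k\in a_{2,E}$ together with the generators $\p_{k,0}=\langle \M_{k-1},\tilde{\M}_k\rangle$.
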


\begin{proof}
As explained above, $a_{2,E}$ must restrict to some $\p_{k,0}$. We determine $k$ by inspecting which Rost motives become $2$-isotropic after base-change to $E$. 

Suppose $S_E=\emptyset$, i.e. $E\not\twodom \R(Q_n)$ for all $n\in \N$. Then $(\M_n)_E$ is $2$-anisotropic for all $n$, hence $\M_n\in a_{2,E}$ for all $n$. This forces $a_{2, E} \cap \DQMgm(\R; \F_2)=\p_{\infty,0}$.

Assume now that $S_E\neq\emptyset$, and let $k=\min(S_E)$. Then $E\twodom \R(Q_k)$, so $(\M_k)_E$ is $2$-isotropic and therefore $\tilde{\M}_k\in a_{2,E}$. If $k=1$, this forces $a_{2,E}$ to restrict to $\p_{1,0}$. If $k \geq 2$, then we know that $E \not \twodom \R(Q_{k-1})$: in particular, $(\M_{k-1})_E$ is $2$-anisotropic, so $\M_{k-1} \in a_{2,E}$. From this, it is clear that $a_{2,E}$ must restrict to $\p_{k,0}$, and this concludes the proof. 
\end{proof}

\subsection{The integral spectrum of quadrics over $\R^{alg}$}
If the algebraic closure $\overline{F}$ of a real closed field $F$ satisfies an additional vanishing condition in motivic cohomology \cite[Hypothesis 6.6]{gallauer2019tt}--for instance, these are known to be satisfied by $F=\R^{\mathrm{alg}}$--then one can adapt the arguments of \cite[Theorem 11.3]{balmer2022three} to fully describe the Balmer spectrum of $\DQMgm(F; \Z)$. The first observation is that for any real closed field $F$ and coefficient ring $R$ in which $2$ is invertible, one has an identification
\[
\DQMgm(F; R)=\DATMgm(F; R).
\]
Indeed, if $2$ is a unit in $R$, then motives of smooth projective quadrics split into Tate motives and motives of quadratic extensions; over a real closed field these are precisely the generators of the category of Artin-Tate motives. In particular, for $R=\F_p$ with $p$ an odd prime, the tt-spectrum of $\DATMgm(F;\F_p)$, and hence of $\DQMgm(F;\F_p)$, is computed in \cite{balmer2022three}. For $R=\Q$, it is conjectured that $\Spc(\DATMgm(F;\Q))$ is a singleton, and known to be under \textit{Gallauer's vanishing condition} on $\overline{F}$.

With these inputs in place, we describe the integral spectrum of $\DQMgm(\R^{\mathrm{alg}}; \Z)$, and we conjecture that the same description holds for every real closed field.
\begin{cor}
\emph{Let $F=\R^{alg}=\overline{\Q} \cap \R$ be the field of real algebraic numbers, then $\Spc(\DQMgm(\R^{alg}; \Z))$ consists of the following points and specialisation relations
\[\begin{tikzcd}[ampersand replacement=\&,sep=tiny]
	{\p_{\infty,0}} \&\& {\p_{k+1,0}} \& {\p_{k,0}} \& {} \& {\p_{3,0}} \& {\p_{2,0}} \& {\p_{1,0}=\mathfrak{m}_2} \& {\mathfrak{m}_3} \& {\mathfrak{m}_5} \& \dots \& {\mathfrak{m}_l} \& \dots \\
	\& \dots \&\&\& \dots \\
	\& {} \& {\p_{k+1,1}} \& {\p_{k,1}} \& {} \& {\p_{3,1}} \& {\p_{2,1}} \& {\p_{1,1}=\mathfrak{e}_2} \& {\mathfrak{e}_3} \& {\mathfrak{e}_5} \& \dots \& {\mathfrak{e}_l} \& \dots \\
	{\p_{\infty,1}} \\
	\&\&\&\& {\p_{\infty,2}} \\
	\&\&\&\&\&\&\&\&\&\& {\mathcal{P}_0}
	\arrow[no head, from=1-1, to=4-1]
	\arrow[between={0}{0.4}, no head, from=1-3, to=3-2]
	\arrow[no head, from=1-3, to=3-3]
	\arrow[no head, from=1-4, to=3-3]
	\arrow[no head, from=1-4, to=3-4]
	\arrow[between={0}{0.4}, no head, from=1-6, to=3-5]
	\arrow[no head, from=1-6, to=3-6]
	\arrow[no head, from=1-7, to=3-6]
	\arrow[no head, from=1-7, to=3-7]
	\arrow[no head, from=1-8, to=3-7]
	\arrow[no head, from=1-8, to=3-8]
	\arrow[no head, from=1-9, to=3-9]
	\arrow[no head, from=1-10, to=3-10]
	\arrow["\dots"{description}, no head, from=1-11, to=3-11]
	\arrow[no head, from=1-12, to=3-12]
	\arrow["\dots"{description}, no head, from=1-13, to=3-13]
	\arrow["\dots"{description}, shift right, no head, from=3-2, to=5-5]
	\arrow[no head, from=3-3, to=5-5]
	\arrow[between={0}{0.4}, no head, from=3-4, to=1-5]
	\arrow[no head, from=3-4, to=5-5]
	\arrow[no head, from=3-6, to=5-5]
	\arrow[no head, from=3-7, to=5-5]
	\arrow[no head, from=3-8, to=6-11]
	\arrow[no head, from=3-9, to=6-11]
	\arrow["\dots"{description}, no head, from=3-11, to=6-11]
	\arrow[no head, from=3-12, to=6-11]
	\arrow["\dots"{description}, no head, from=3-13, to=6-11]
	\arrow[shift right=3, no head, from=4-1, to=5-5]
	\arrow["\dots"{description, pos=0.3}, between={0}{0.6}, no head, from=5-5, to=2-5]
	\arrow[shift right=3, no head, from=5-5, to=3-8]
\end{tikzcd}\]
where $l$ ranges over all primes. Moreover, the closed subsets are specialisation closed subsets $Z$ such that
\begin{itemize}
    \item if $\p_{\infty,0} \not \in Z$, then $Z \cap \overline{\{\p_{\infty, 2}\}}$ is finite;
    \item if $\mathcal{P}_{0} \not \in Z$, then $Z \cap \overline{\{\mathcal{P}_{0}\}}$ is finite.
\end{itemize}}
\end{cor}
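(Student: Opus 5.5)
We follow the strategy of \cite[Theorem 11.3]{balmer2022three}: reduce the integral spectrum to the spectra with coefficients in $\F_l$ for every prime $l$ and in $\Q$, and then glue. For a rigid tt-category $\cat{K}$ that is $\Z$-linear, the central ring $\mathrm{End}(\unit)=\Z$ gives Balmer's comparison map $\Spc(\cat{K})\to\Spec(\Z)$. We would use this to split $\Spc(\DQMgm(F;\Z))$ into fibres. Over the generic point we expect the fibre to be $\Spc(\DQMgm(F;\Q))$. Over a closed point $(l)$ we expect it to be the image of $\Spc(\DQMgm(F;\F_l))$ under the change-of-coefficients functor $-\otimes^{\mathbb{L}}\F_l$. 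This uses the standard result that the fibre over $(l)$ is identified, as a set with its specialisations, with the spectrum of the $\F_l$-linear category, via $\cone(l)$ and the Bockstein argument used in \cite{gallauer2019tt,balmer2022three}.

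The fibres are then read off from earlier results. For $l=2$, the fibre is the space of the Main Theorem, by Theorem~\ref{thm: basis of spectrum} and Corollary~\ref{cor: closed sets of the spectrum}. For $l$ odd, since $\DQMgm(F;\F_l)=\DATMgm(F;\F_l)$, \cite[Corollary 11.2]{balmer2022three} applies. Because $F(\sqrt{-1})=\overline{F}$ and $[\overline{F}:F]=2$ is invertible mod $l$, this spectrum is the two-point Sierpiński space $\{\mathfrak{m}_l,\mathfrak{e}_l\}$, pulled back from $\DTMgm(\overline{F};\F_l)$. For $\Q$, the identification with $\DATMgm(F;\Q)$ together with the Peter--Gallauer vanishing hypothesis for $\overline{F}=\overline{\Q}$ gives a single point $\mathcal{P}_0$.

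Next we determine the specialisations between fibres. Following \cite[Theorem 11.3]{balmer2022three}, the point $\mathcal{P}_0$ (the kernel of rationalisation) specialises to a mod-$l$ point exactly when that point contains no object with rationally trivial but mod-$l$ nontrivial behaviour. Concretely, $\mathcal{P}_0$ lies below the étale points $\mathfrak{e}_l$, generated by $\cone(l)$ together with $\cone(\tau)$ mod $l$. We check this by pulling back along the base change to $\overline{F}$: for Tate motives over $\overline{\Q}$, Gallauer's computation shows $\mathcal{P}_0$ specialises to every $\mathfrak{e}_l$. For $l=2$ this gives $\mathcal{P}_0\rightsquigarrow\p_{1,1}=\mathfrak{e}_2$. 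No other mod-$2$ point contains the generic point. Indeed, every $\p_{k,j}$ with $k\geq2$, and every $\p_{\infty,j}$, contains $\M_1$, whose rationalisation is not in $\mathcal{P}_0$. Thus $\mathcal{P}_0\not\subseteq\p$ fails, and the inclusions go the wrong way. We therefore verify the non-relations by exhibiting, for each such prime, an integral object such as $\M_1$ or $\cone(\rho)$ in one ideal but not the other; Table~\ref{tab:geometric_functors} provides these.

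Finally, for the topology we describe supports of objects of $\DQMgm(F;\Z)$ as unions of their mod-$l$ supports and their rational support. An object $A$ has $\mathcal{P}_0\notin\supp(A)$ iff $A$ is torsion, iff $A$ is killed by some integer $N$. Then $\supp(A)$ meets only the fibres over primes dividing $N$, which gives the second finiteness condition on $\overline{\{\mathcal{P}_0\}}$. Within the $2$-fibre the first condition is inherited from Corollary~\ref{cor: closed sets of the spectrum}. Conversely, every allowed specialisation-closed $Z$ is realised by sums and tensor products of $\cone(l)$, $\cone(l)\otimes\cone(\tau)$, the integral lifts of Rost motives, and objects from Theorem~\ref{thm: basis of spectrum} tensored with $\cone(2)$.

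The main obstacle is the gluing step, namely proving that each fibre of the comparison map is exactly the mod-$l$ spectrum with the correct subspace topology, together with controlling the specialisation from $\mathcal{P}_0$. There we first try to transport Balmer--Gallauer's argument verbatim, using that the only new $2$-torsion phenomena come from Rost motives, which are integral objects.
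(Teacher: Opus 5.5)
Your route is genuinely different from the paper's, and it can be made to work. However, one step in the topology fails as written, and several statements in the gluing need correcting.

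\textbf{Comparison of routes.} You fibre $\Spc(\DQMgm(F;\Z))$ over $\Spec(\Z)$, compute each fibre separately, and then glue across infinitely many fibres by hand. The paper instead uses the \emph{finite} closed cover $\supp(\cone(2))\cup\supp(\M(\overline{F}))$. This covers the spectrum because $\T\to\M(\overline{F})\to\T$ is multiplication by $2$. The paper identifies the two pieces, via $cc_2$ and base change, with $\Spc(\DQMgm(F;\F_2))$ and with Gallauer's $\Spc(\DTMgm(\overline{F};\Z))$.

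The second piece already contains every $\mathfrak{m}_l$, $\mathfrak{e}_l$ and $\mathcal{P}_0$ together with their specialisations. So the paper needs no separate odd-prime or rational input, and the only gluing is the collision $\p_{1,0}=\mathfrak{m}_2$, $\p_{1,1}=\mathfrak{e}_2$. The topology then reduces to: $Z$ is closed iff its intersections with the two pieces are closed. Your fibrewise approach makes the $\Spec(\Z)$-structure explicit, but you still need base change to $\overline{F}$ to get $\mathcal{P}_0\rightsquigarrow\mathfrak{e}_l$.

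\textbf{Errors in the gluing.} Your $\M_1$ test is the right way to show that no $\p_{k,j}$ with $k\geq 2$ lies in $\overline{\{\mathcal{P}_0\}}$. The criterion you state before it is reversed, though. In fact $\mathfrak{q}\in\overline{\{\mathcal{P}_0\}}$ iff $\mathfrak{q}\subseteq\mathcal{P}_0$, that is, iff every object of $\mathfrak{q}$ is torsion. Two further points:
\begin{itemize}
\item No point over $(l)$ contains $\cone(l)$, since that fibre is $\supp(\cone(l))$. So $\mathfrak{e}_l$ is not generated by $\cone(l)$ and $\cone(\tau)$.
\item $\cone(\tau)$ is not an integral object. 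For odd $l$ there is not even a $\tau$ over $F$, since $\mu_l(F)=1$. Wherever you use it integrally, you must use the underlying integral object of a mod-$l$ object, or an object detected over $\overline{F}$.
\end{itemize}

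\textbf{The failing step: the converse for the topology.} Finite sums and tensor products only produce supports, which form a basis. Closed sets are arbitrary intersections of supports, and not every admissible $Z$ is a support. For example, $\{\p_{\infty,0}\}$ is specialisation closed and satisfies both conditions. If $\supp(A)=\{\p_{\infty,0}\}$, then $\supp(cc_2A)=\{\p_{\infty,0}\}$, contradicting Theorem~\ref{thm: basis of spectrum}.

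So you must prove closedness directly, and with infinitely many fibres, closedness in each fibre is not enough. Use the finiteness conditions:
\begin{itemize}
\item If $\mathcal{P}_0\in Z$, then $Z=\overline{\{\mathcal{P}_0\}}\cup(Z\cap\supp(\cone(2)))$, because every point outside the $2$-fibre lies in $\overline{\{\mathcal{P}_0\}}$.
\item If $\mathcal{P}_0\notin Z$, the second condition forces $Z$ to meet only finitely many odd fibres. Then $Z$ is a finite union of its intersections with closed fibres, and each odd-fibre part is closed in a Sierpi\'nski space.
\end{itemize}
In both cases the $2$-fibre part is closed by Corollary~\ref{cor: closed sets of the spectrum}. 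This needs $\Spc(cc_2)$ to be injective and a homeomorphism onto $\supp(\cone(2))$.

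That is not a general ``standard result''. As the paper notes, integral generators settle injectivity for every prime except $\p_{1,1}$ and $\p_{\infty,2}$. Those two involve $\cone(\tau)$, and there you need the Balmer--Gallauer argument, exactly as the paper does.
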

\begin{proof}
The proof follows essentially by the same arguments of \cite[Theorem 11.3]{balmer2022three}. First, since $\M(\overline{F})$ has Euler characteristic 2, we may decompose
\[\Spc(\DQMgm(F; \Z))=\supp(\cone(2)) \cup \supp(\M(\overline{F})).\]
By \cite[Theorem 1.7]{balmer2018surjectivity} $\supp(\cone(2))$ identifies with the image of pull-back on spectra induced by
\[cc_2:\DQMgm(F; \Z) \to \DQMgm(F; \F_2).\]
Moreover $\Spc(cc_2)$ is injective: this may be seen from the generators for all the prime tt-ideals other than $\p_{1,1}$ and $\p_{\infty,2}$, but for these two it follows by the same reasoning in \cite[Theorem 11.3]{balmer2022three}. Moreover, $\supp(\M(\overline{F}))$ identifies with the image--again by \cite[Theorem 1.7]{balmer2018surjectivity}--of the injective pull-back map on spectra induced by base change 
\[p:\DQMgm(F; \Z) \to \DTMgm(\overline{F}; \Z),\]
where the injectivity follows from the same arguments in \cite[Theorem 11.3]{balmer2022three}. Moreover, the collisions of the images of the two maps follow from the same reasons in \textit{loc. cit.} 

The description of the topology may be obtained from the fact that $Z$ is closed if and only if 
\[Z_1:=Z \cap \supp(\cone(2)) \text{ and } Z_2:=Z \cap \supp(\M(\overline{F}))\] 
are closed. The rest follows by the description of the topology of the spectrum $\DQMgm(F; \F_2)$ in Corollary \ref{cor: closed sets of the spectrum}, and the spectrum of $\DTMgm(\overline{F}; \Z)$ computed in \cite{gallauer2019tt}, by taking the continuous bijective pull-back of these subsets under $\Spc(cc_2)$ and $\Spc(p)$.
\end{proof}

\printbibliography
\end{document}